\newcommand{\id}{\,\mathrm{d}}
\newcommand{\iid}{\,\mathrm{Id}}
\DeclareMathOperator{\ad}{ad}
\DeclareMathOperator{\End}{End}
\DeclareMathOperator{\im}{im}
\DeclareMathOperator{\cosech}{cosech}
\DeclareMathOperator{\tr}{tr}
\renewcommand{\exp}{\mathop{\rm exp}\nolimits}
\newtheorem{theorem}{Theorem}[section]
\newtheorem{maintheorem}[theorem]{Main Theorem}
\newtheorem{lemma}[theorem]{Lemma}
\newtheorem{corollary}[theorem]{Corollary}
\newtheorem{proposition}[theorem]{Proposition}
\theoremstyle{definition}
\newtheorem*{rem}{Remark}
\newtheorem{definition}[theorem]{Definition}
\title{Classification of Certain Rational Isoparametric Functions on Damek--Ricci Spaces}
\begin{document}
\author{Bal\'azs Csik\'os}
\address{B.~Csik\'os, Dept.\ of Algorithms and their Applications, Faculty of Informatics, E\"otv\"os Lor\'and University,  P\'azm\'any P\'eter stny.\@ 1/C, H-1117 Budapest, Hungary.}
\email{\href{mailto:balazs.csikos@ttk.elte.hu}{csikosbalazs@inf.elte.hu}}

\author{M\'arton Horv\'ath}
\address{M. Horv\'ath, Dept.\ of Algebra and Geometry, Institute of Mathematics, Budapest University of Technology and Economics, M\H{u}\-egye\-tem rkp.\@ 3., H-1111 Budapest, Hungary.} 
\email{\href{mailto:horvathm@math.bme.hu}{horvathm@math.bme.hu}}
\date{}
\keywords{Isoparametric function, transnormal function, focal variety, Damek--Ricci space, mean curvature}
\subjclass[2020]{Primary 53C25, Secondary 53C40, 53C30}
\maketitle
\begin{abstract}
We classify isoparametric functions on Damek--Ricci spaces which can be written in terms of the standard coordinates $(v,z,t)$ on the half-space model as a polynomial function divided by $t$. Regular level sets of the functions in our classification encompass almost all previously known examples of isoparametric hypersufaces in Damek--Ricci space and also yield new ones. For the new examples, the focal varieties are determined and the mean curvatures of the regular level sets are expressed as a function of the distance from the focal variety. We also study the exceptional case of tubes about $\mathbb R\mathbf H^k$ in  $\mathbb C\mathbf H^k$, which are isoparametric, but cannot be obtained as the level sets of any function in our classification. We show that these tubes are level sets of a polynomial function divided by $t^2$, and that analogous functions  on Damek--Ricci spaces can be  isoparametric only in the case of $\mathbb C\mathbf H^k$. 
\end{abstract}
\section{Introduction}

A hypersurface in a Riemannian manifold is called isoparametric if its nearby parallel hypersurfaces are of constant mean curvature. Isoparametric hypersurfaces in the real hyperbolic space were classified by \'E.~Cartan \cite{Cartan}. Their classification in the Euclidean space were given by B.~Segre \cite{Segre}. Though all isoparametric hypersurfaces in the Euclidean and hyperbolic spaces are  homogeneous, H.~Ozeki and M.~Takeuchi \cite{Ozeki_Takeuchi_1}, \cite{Ozeki_Takeuchi_2}, and D.~Ferus, H.~Karcher, and H.-F.~M\"unzner \cite{Ferus_Karcher_Munzner} found infinitely many non-homogeneous isoparametric hypersurfaces in spheres. Due to the existence of non-homogeneous examples, classification of isoparametric hypersurfaces in the sphere turned out to be a much more difficult problem, which has been achieved in a sequence of papers by  J.~Dorfmeister and E.~Neher \cite{Dorfmeister_Neher}, T.~E.~Cecil, Q.-S.~Chi, and G.~R.~Jensen \cite{Cecil_Chi_Jensen}, Q.-S.~Chi \cite{Chi1}, \cite{Chi2}, \cite{Chi3}, \cite{Chi4}, and R.~Miyaoka \cite{Miyaoka}, \cite{Miyaoka_errata}.
For a survey of the history of the classification of isoparametric hypersurfaces in spaces of constant curvature, see Q.-S.~Chi \cite{Chi_survey}.

There are many results also on isoparametric hypersurfaces in rank one symmetric spaces. Regular orbits of isometric cohomogeneity one actions are isoparametric, prompting classification of such actions. J.~Berndt and H.~Tamaru \cite{Berndt_Tamaru} classified these actions on complex hyperbolic spaces and on the Cayley hyperbolic plane, while J.~C.~D\'iaz-Ramos, M.~Dom\'inguez-V\'azquez, and A.~Rodr\'iguez-V\'azquez \cite{Diaz-Ramos_Dominguez-Vazquez_Rodriguez-Vazquez} extended this to quaternionic hyperbolic space. Subsequently, J.~C.~D\'iaz-Ramos and  M.~Dom\'inguez-V\'azquez \cite{Diaz-Ramos_Dominguez-Vazquez} constructed non-homogeneous isoparametric hypersurfaces in complex hyperbolic spaces, enabling  J.~C.~D\'iaz-Ramos,  M.~Dom\'inguez-V\'azquez, and V.~Sanmart\'in-L\'opez \cite{Diaz-Ramos_Dominguez-Vazquez_Sanmartin-Lopez} to complete the classification of isoparametric hypersurfaces in complex hyperbolic spaces.

Harmonic manifolds share many properties with flat and rank one symmetric spaces, making them relevant for the study of isoparametric hypersurfaces. Locally harmonic manifolds were introduced by E.~T.~Copson and H.~S.~Ruse \cite{Copson_Ruse} as Riemannian manifolds admitting a non-constant radial harmonic function in a punctured neighborhood of any point. A.~J.~Ledger \cite{Ledger} proved that locally symmetric spaces are locally harmonic precisely when they are flat or of rank one.

A.~Lichnerowicz \cite{Lichnerowicz} conjectured in 1944 that locally harmonic manifolds in dimension 4 must be locally symmetric, asking also if this extends to higher dimensions. A.~G.~Walker \cite{Walker} confirmed this conjecture for dimension 4, and Y.~Nikolayevsky \cite{Nikolayevsky} for dimension 5. Z.~I.~Szab\'o \cite{Szabo} proved it for manifolds with compact universal covering space, while G.~Knieper \cite{Knieper} settled it for compact harmonic manifolds without focal points or with Gromov hyperbolic fundamental groups.

For non-compact cases, Lichnerowicz's conjecture fails in infinitely many dimensions starting at $7$. E.~Damek and F.~Ricci \cite{Damek_Ricci} discovered that certain solvable extensions of Heisenberg-type Lie groups with appropriate metrics create globally harmonic manifolds, that are symmetric only when the Heisenberg-type group has center dimension $1$, $3$, or $7$. Currently, harmonic symmetric spaces and Damek--Ricci spaces are the only known harmonic manifolds. J.~Heber \cite{Heber} showed that simply connected homogeneous harmonic manifolds must be flat, rank one symmetric, or Damek--Ricci spaces. It remains open if there exist non-homogeneous harmonic manifolds.

Up to now, two methods has been found to construct isoparametric hypersurfaces in general Damek--Ricci spaces.  

First J.~C.~D\'iaz-Ramos and  M.~Dom\'inguez-V\'azquez \cite{Ramos_Vazquez} observed that the tubes about certain Lie subgroups of the Damek--Ricci group are parallel isoparametric hypersurfaces.

Later the authors \cite{Csikos_Horvath_isoparametric1} gave an analytical description of another family of isoparametric hypersurfaces, which were called sphere-like hypersurfaces. 
E.~T.~Copson and H.~S.~Ruse \cite{Copson_Ruse} proved that a 
non-compact, complete, simply connected Riemannian manifold is harmonic if and only if its geodesic spheres are isoparametric hypersurfaces. This implies that the horospheres, that can be obtained as the limit of an inflating family of geodesic spheres, are also isoparametric. Sphere-like hypersurfaces are members of the analytic prolongation of such a family of inflating spheres beyond the horospheres. 

To define sphere-like isoparametric hypersurfaces explicitely, the authors used the so-called half-space model of Damek--Ricci spaces, which will be defined in 
Subsection \ref{sec:half_space_model}. Isoparametric hypersurfaces are closely related to isoparametric functions. Geometrically, a smooth function is isoparametric if its regular level sets are parallel isoparametric hypersurfaces. For example, for any given point $x_0=(v_0,z_0,t_0)$ of the Damek--Ricci space, the function $F_{x_0}(x)=4t_0\cosh^2(d(x,x_0)/2)$ is an isoparametric function, the regular level sets of which are the concentric geodesic spheres centered at $x_0$. (Letter $d$ denotes the  distance function.) Though many other functions of the distance from $x_0$ would provide an isoparametric function with the same level sets, the choice of this function is justified by the property that using the standard coordinates $(v,z,t)$ on the half-space model, $F_{x_0}(v,z,t)$ has the form $G_{x_0}(v,z,t)/t$, where $G_{x_0}$ is a polynomial of degree $4$. More explicitly,
\begin{equation}\label{eq:distance_like}
    F_{(v_0,z_0,t_0)}(v,z,t)=\frac{1}{t}\left(\left(t+t_0+\left\|\frac{v-v_0}{2}\right\|^2\right)^2+ \left\|z-z_0+\frac12[v,v_0]\right\|^2\right).
\end{equation}
The main observation of \cite{Csikos_Horvath_isoparametric1} was that the polynomial $G_{x_0}$ is properly defined also when the point $x_0$ is moving out of the half-space model, and yields an isoparametric function $F_{x_0}=G_{x_0}/t$. When $x_0$ is on the boundary of the model, then the level sets of $F_{x_0}$ are parallel horospheres, when it is in the complementary open half-space, then the level sets are the sphere-like isoparametric hypersurfaces introduced in  \cite{Csikos_Horvath_isoparametric1}. 

It was also pointed out in \cite{Csikos_Horvath_isoparametric1}, that some of the isoparametric hypersurfaces constructed in  \cite{Ramos_Vazquez} can be obtained as limits of the level sets of $F_{x_0}$ as $x_0$ tends to a point at infinity. Furthermore, all examples of \cite{Ramos_Vazquez} can be obtained as the level sets of a function of the form $G(v,z,t)/t$, where $G$ is a quadratic polynomial. 

This background motivated the main objective of this paper. Our main goal is to classify all isoparametric functions on the Damek--Ricci space that can be written as $G(v,z,t)/t$ in terms of the standard coordinates on the half-space model of the space, for which $G$ is a polynomial function. The classification (Main Theorem \ref{thm:main}) includes all the examples of \cite{Ramos_Vazquez} and  \cite{Csikos_Horvath_isoparametric1}, and also provides some new examples.
 
By a fundamental theorem of Q.~M.~Wang \cite{Wang}, if an isoparametric function admits a singular level set, called a focal variety, then the regular level sets of the function are tubes about any of the focal varieties. In particular, any of the focal varieties determines all the level sets of the function. The focal varieties of the new isoparametric functions found in this paper, coincide with the focal varieties of some sphere-like isoparametric hypersurfaces lying in certain  ``smaller'' Damek--Ricci subspaces.  

The paper is structured as follows. In Section \ref{sec:preliminaries}, we collect definitions and facts on isoparametric functions, on Damek--Ricci spaces, and on their half-space models. A function is isoparametric if it is transnormal and satisfies the so-called Laplace condition.  In Section \ref{sec:isoparametric_equations}, we compute the equations for a polynomial $G(v,z,t)$ which
characterize the transnormality and the Laplace conditions for the quotient $F=G/t$. It is remarkable that the transnormality condition alone gives very strong restrictions on the polynomial $G$. In Section \ref{sec:transnormality}, we collect these restrictions to understand the structure of the polynomial $G$. In the next section, we investigate, that among those polynomials that give a transnormal function $G/t$, which ones satisfy the equations coming from the Laplace condition. This will lead us to the proof of our Main Theorem \ref{thm:main}.

Finally, Section \ref{sec:misc} gives an explicit description of the focal varieties of the new isoparametric functions and a formula for the mean curvatures of the regular level sets as a function of their distances from the focal variety. Though it is tempting to expect that isoparametric hypersurfaces obtained in this paper will lead to a full classification of such hypersurfaces in Damek--Ricci spaces, this goal has not been reached yet. Tubes about $\mathbb R\mathbf H^k$ in $\mathbb C\mathbf H^k$ is a known example of a family of parallel isoparametric hypersurfaces \cite{Montiel}, but they cannot be obtained as regular level sets of an isoparametric function of the form $G/t$, where $G$ is a polynomial. However they are level sets of an isoparametric function of the form $G/t^2$ for a suitable polynomial $G$. We compute the polynomial $G$ explicitly in Subsection \ref{subsec:6.3}. There is a natural way to construct analogous functions on Damek--Ricci spaces. It will be proved in Proposition \ref{prop:6.4} that among these analogous functions only the one defined on the complex hyperbolic space is isoparametric. This result suggests that the example of tubes about $\mathbb R\mathbf H^k$ in $\mathbb C\mathbf H^k$  might be a sporadic exception.

\section{Preliminaries\label{sec:preliminaries}}
\subsection{Isoparametric functions\label{sec:isoparametric}}

\begin{definition}
A smooth function $F\colon M\to \mathbb R$ defined on a Riemannian manifold $M$ is said to be \emph{isoparametric} if there exist a continuous function $a\colon \im F\to \mathbb R$ and a $\mathcal C^2$ function $b\colon \im F\to \mathbb R$ such that 
\begin{equation}\label{eq:isoparametric_a_b}
    \Delta F=a\circ F\qquad \text{ and }\qquad\|\nabla F\|^2=b\circ F.
\end{equation}
\end{definition}
We shall refer to the first condition $\Delta F=a\circ F$ as the \emph{Laplace condition}.
The geometric meaning of the second, so-called \emph{transnormality condition} is that nearby regular level sets of $F$ are parallel hypersurfaces. The mean curvature of a hypersurface $\Sigma$ can be expressed as $\frac{1}{\dim \Sigma}h$, where $h$ is the trace of the shape operator of $\Sigma$ (with respect to a fixed unit normal). The following proposition shows that the regular level sets of an isoparametric function are isoparametric hypersurfaces and gives a formula for their mean curvature.

\begin{proposition}[{\cite[Proposition 2.2]{Csikos_Horvath_isoparametric1}}]\label{prop:tube_mean_curvature}
 If $F$ is an isoparametric function  satisfying equations \eqref{eq:isoparametric_a_b}, then the trace $h$ of the shape operator of a regular level set $F^{-1}(c)$ of $F$ with respect to the unit normal vector field $\mathbf N=\frac{\nabla F}{\sqrt{b\circ F}}$ is expressed by
 \[
 h=\frac{-2 a(c)+b'(c)}{2\sqrt{b(c)}}.
 \]
 \end{proposition}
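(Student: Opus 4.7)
The plan is to recognize $h$ as minus the ambient divergence of the unit normal field $\mathbf N$ (extended naturally to a neighborhood of the level set), and then exploit the two isoparametric identities to compute that divergence explicitly.

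First I would fix a regular value $c$ and work in the open set where $\nabla F\neq 0$, so that $\mathbf N=\nabla F/\sqrt{b\circ F}$ is a smooth unit vector field and all nearby level sets of $F$ are smooth hypersurfaces. For $X$ tangent to $\Sigma=F^{-1}(c)$ the shape operator is $S(X)=-\nabla_X\mathbf N$, so $h=\tr S=-\tr_{\Sigma}(\nabla\mathbf N)$. Because $\|\mathbf N\|\equiv 1$ we have $\langle\nabla_Y\mathbf N,\mathbf N\rangle=0$ for every vector $Y$; in particular $\langle\nabla_{\mathbf N}\mathbf N,\mathbf N\rangle=0$, which lets me replace the tangential trace by the full ambient trace and obtain the clean identity $h=-\operatorname{div}\mathbf N$.

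Next I would compute $\operatorname{div}\mathbf N$ using the product rule $\operatorname{div}(f\,\nabla F)=f\,\Delta F+\langle\nabla f,\nabla F\rangle$ with $f=(b\circ F)^{-1/2}$. The Laplace condition rewrites the first summand as $(a\circ F)/\sqrt{b\circ F}$. For the second summand, the chain rule gives $\nabla f=-\tfrac{1}{2}(b'\circ F)(b\circ F)^{-3/2}\nabla F$, and the transnormality condition $\langle\nabla F,\nabla F\rangle=b\circ F$ collapses $\langle\nabla f,\nabla F\rangle$ to $-(b'\circ F)/(2\sqrt{b\circ F})$. Combining,
\[
\operatorname{div}\mathbf N=\frac{2(a\circ F)-(b'\circ F)}{2\sqrt{b\circ F}},
\]
and evaluating at points of $F^{-1}(c)$ and taking the negative yields the stated formula.

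There is no real obstacle here beyond bookkeeping; the only points that require a moment of care are the sign convention for $S$ and the justification that the tangential and ambient traces of $\nabla\mathbf N$ agree, both of which follow immediately from $\mathbf N$ being a unit vector field. Everything else is an application of the two defining identities of an isoparametric function together with the standard divergence product rule.
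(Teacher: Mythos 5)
Your argument is correct: the identities $\|\mathbf N\|\equiv 1$ and $\langle\nabla_{\mathbf N}\mathbf N,\mathbf N\rangle=0$ justify $h=-\operatorname{div}\mathbf N$, the product rule plus the two isoparametric identities give the claimed formula, and the sign convention $S(X)=-\nabla_X\mathbf N$ is the one consistent with the stated result (as one can confirm against the outward normal of a Euclidean sphere). The paper itself only cites this proposition from \cite{Csikos_Horvath_isoparametric1} without reproducing a proof, but your derivation is the standard one and there is nothing to add.
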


\begin{definition} The \emph{focal varieties} of an isoparametric function
    are its singular level sets. 
\end{definition}
There are some basic results of Q.~M.~Wang \cite{Wang} and  J.~Ge and Z.~Tang \cite{Ge_Tang}  on isoparametric functions.

\begin{theorem}[\cite{Wang},\cite{Ge_Tang}]\label{Wang}
For an isoparametric function $F$ on a connected and complete Riemannian manifold, 
\begin{enumerate}[label=\emph{(\roman*)}]
    \item only the minimal and maximal values of $F$ can be singular;
    \item the focal varieties of $F$ are smooth minimal submanifolds;
    \item the regular level sets of $F$ are tubes about either of the focal varieties, having  constant mean curvature.
\end{enumerate}
\end{theorem}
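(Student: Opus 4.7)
The plan is to rest the whole argument on one observation: the transnormality condition forces the orthogonal trajectories of the regular level sets to be geodesics. Differentiating $\|\nabla F\|^2=b(F)$ in an arbitrary direction $Y$ and using the symmetry of the Hessian yields
\[
\nabla_{\nabla F}\nabla F \;=\; \tfrac{1}{2}\, b'(F)\,\nabla F,
\]
so that after renormalizing, the unit field $N=\nabla F/\sqrt{b(F)}$ is autoparallel on the regular set $\{b(F)>0\}$.

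With this in hand, along any integral curve $\gamma$ of $N$ parametrized by arc length, the scalar $f(s)=F(\gamma(s))$ satisfies the autonomous first-order ODE $f'(s)=\sqrt{b(f(s))}$ and equivalently the second-order equation $f''(s)=\tfrac{1}{2}b'(f(s))$. From this I would deduce that between two close regular values $c,c'$ the normal exponential map is a diffeomorphism $F^{-1}(c)\to F^{-1}(c')$ which carries every fibre to a parallel hypersurface at constant distance $\int_c^{c'}dt/\sqrt{b(t)}$; this, combined with Proposition~\ref{prop:tube_mean_curvature}, already proves the regular part of (iii).

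The main work lies in (i) and (ii), i.e.\ in controlling $F$ at a singular value $c$. The crucial input is that $b\ge 0$ and $b\in\mathcal{C}^2$, so at any zero $c$ of $b$ one automatically has $b'(c)=0$. The second-order ODE $f''=\tfrac{1}{2}b'(f)$ therefore has a Lipschitz right-hand side near $c$, and the initial data $(f(s_\star),f'(s_\star))=(c,0)$ force the constant solution $f\equiv c$. I would leverage this rigidity to establish (i) by contradiction: if $c_0<c<c_1$ were singular, I would pick a regular $c'$ slightly above $c$, flow the gradient backwards from a point of $F^{-1}(c')$ (the geodesic exists for all time by completeness), and show that $f(s)$ either reaches $c$ in finite arc length, whence $f\equiv c$ by uniqueness, contradicting $f(0)=c'$; or tends monotonically to a zero of $b$ inside $(c,c']$, which can only be $c$, so the geodesic ray accumulates on $F^{-1}(c)$. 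A symmetric argument from below $c$ closes the contradiction.

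For (ii), once (i) is in place, the focal variety $\Sigma^\star$ lies entirely in the critical set of $F$. Smoothness of $\Sigma^\star$ follows by extending the normal exponential map from a nearby regular level set and identifying its limit; minimality follows from a standard tube expansion, since the classical formula for the mean curvature of a tube of radius $r$ around a submanifold contains a term proportional to the mean curvature vector of the submanifold, and this contribution must vanish if the expression in Proposition~\ref{prop:tube_mean_curvature} is to depend only on $F$. The remaining claim of (iii) is then automatic, as the regular level sets are by construction the equidistant hypersurfaces from $\Sigma^\star$. The principal obstacle I anticipate is the ODE-and-completeness step in (i): the first-order equation $f'=\sqrt{b(f)}$ is not Lipschitz at a zero of $b$, so one has to pass consistently to the second-order form and use $b'(c)=0$ to restore uniqueness.
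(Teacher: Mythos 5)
This theorem is quoted in the paper from Wang and Ge--Tang; the paper offers no proof, so there is nothing to compare against line-by-line. Evaluating your sketch on its own merits: the first block (autoparallelism of $N$, the identities $f'=\sqrt{b\circ f}$ and $f''=\tfrac12 b'\circ f$ along normal geodesics, the observation that $b\ge 0$, $b\in\mathcal C^2$ forces $b'(c)=0$ at an \emph{interior} zero of $b$) is correct and is indeed the right toolkit. But note already that you state ``at any zero $c$ of $b$ one automatically has $b'(c)=0$'': this is false at the endpoints of $\im F$. At a boundary zero the one-sided minimum only gives a sign on $b'$, and in fact in all the standard examples (geodesic spheres, the function \eqref{eq:distance_like}) $b$ has a \emph{simple} zero at $\min F$, so that $\int \ud x/\sqrt{b(x)}$ converges and the focal variety is at finite distance. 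Your blanket statement, if it were true, would make part (iii) impossible.

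The real gap is in (i). Because $b'(c)=0$ at an interior critical value, your first alternative (``reaches $c$ in finite arc length'') is vacuous: $\int_c^{c'}\ud x/\sqrt{b(x)}$ always diverges when $b$ has a double or higher zero, so the backward gradient flow never touches $F^{-1}(c)$. You are then left with the second alternative, in which $f(s)\to c$ as $s\to-\infty$, but the geodesic $\gamma(s)$ need not accumulate anywhere in $M$ --- on a complete noncompact manifold it may simply escape to infinity while $F\circ\gamma$ decays to $c$. The sentence ``a symmetric argument from below $c$ closes the contradiction'' is not substantiated and, as stated, does not produce a contradiction. The clean argument uses the \emph{inequality} rather than the ODE equality: take a critical point $p$ with $F(p)=c$ and an \emph{arbitrary} unit-speed geodesic $\gamma$ through $p$. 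Then $f=F\circ\gamma$ satisfies $|f'(s)|=|\langle\nabla F,\gamma'\rangle|\le\|\nabla F\|=\sqrt{b(f(s))}$. Since $b(c)=b'(c)=0$ and $b\in\mathcal C^2$, Taylor gives $b(y)\le L(y-c)^2$ near $c$, hence $|f'|\le \sqrt{L}\,|f-c|$ with $f(0)=c$, and Gronwall forces $f\equiv c$. As $\exp_p$ is onto by completeness, $F\equiv c$, which contradicts $\inf F<c<\sup F$. Your insistence on using the gradient flow line (where the ODE holds with equality) is precisely what creates the need to ``reach'' the critical set in finite time, which never happens.

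For (ii), the smoothness of the focal variety is waved away as ``extending the normal exponential map \ldots and identifying its limit.'' This is the genuinely delicate step --- it is exactly the point where Wang's original proof was incomplete and where Ge and Tang supplied the missing argument (a careful local analysis of the normal exponential map near the critical set to show the focal variety is an embedded submanifold and that the regular level sets are tubes of the correct dimension around it). A proof sketch that treats this as a formality is missing the main technical content of the theorem.
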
	

\subsection{Damek--Ricci spaces\label{sec:Damek-Ricci}}
	
 Damek--Ricci spaces are solvable Lie groups equipped with a left-invariant Riemannian metric. To construct a Damek--Ricci space, we have to fix 
 \begin{itemize}
     \item a Euclidean linear space $(\mathfrak s,\langle\,,\rangle)$ with an orthogonal decomposition 
 $\mathfrak s=\mathfrak v\obot \mathfrak z\obot \mathfrak a$, where $\mathfrak a$ is a 1-dimensional subspace spanned by a given unit vector $\mathbf a\in \mathfrak a$;

 \item a representation $J\colon \mathrm{Cl}(\mathfrak z,q)\to \mathrm{End}(\mathfrak v)$ of the Clifford algebra $\mathrm{Cl}(\mathfrak z,q)$ of the quadratic form  $q\colon \mathfrak z\to \mathbb R$, $q(z)=-\langle z,z\rangle$ such that 
	\begin{equation*}
 	\|J_zv\|=\|z\| \|v\| \qquad \forall\, z\in \mathfrak z,\, v\in \mathfrak v.
	\end{equation*}
\end{itemize}
This equation implies also the identities 
\[\langle J_z v_1,J_z v_2\rangle=\|z\|^2\langle v_1,v_2\rangle\quad\text{and}\quad \langle J_zv_1,v_2\rangle =-\langle v_1,J_zv_2\rangle \qquad \forall\, z\in \mathfrak z,\, v_1,v_2\in \mathfrak v.
\]
  
We can equip the linear space $\mathfrak n=\mathfrak v\obot \mathfrak z$ with a Lie algebra structure such that $[\mathfrak n,\mathfrak z]=\{0\}$ and  $[\mathfrak v,\mathfrak v]\subseteq \mathfrak z$, defining the Lie bracket of $v_1,v_2\in \mathfrak v$ by
	\begin{equation} \label{eq:ad_J}
	\langle [v_1,v_2],z\rangle = \langle J_z v_1,v_2\rangle\qquad \forall\, z\in \mathfrak z.
	\end{equation}
If $\mathfrak v=\{0\}$ or $\mathfrak z=\{0\}$, then $\mathfrak n$ is commutative, otherwise  $\mathfrak n$ is a $2$-step nilpotent Lie algebra with center $\mathfrak z$. 

	We can introduce a solvable Lie algebra structure on $\mathfrak s$ with the Lie bracket
	\[
	[v_1+z_1+\tau_1\mathbf a,v_2+z_2+\tau_2\mathbf a]=\left(\frac{\tau_1}2v_2-\frac{\tau_2}2v_1 \right)+([v_2,v_1]+\tau_1z_2-\tau_2z_1) \quad \forall\, v_1,v_2\in \mathfrak v,\, z_1,z_2\in \mathfrak z,\,\tau_1,\tau_2\in\mathbb R.
	\]
    
	The simply connected, connected Lie group $S$ with Lie algebra $\mathfrak s$, equipped with the left invariant Riemannian metric induced by $\langle\,,\rangle$ is a Damek--Ricci space.
    
There is a classification of Damek--Ricci spaces. Every Damek--Ricci space is harmonic. The Damek--Ricci spaces corresponding to the degenerate case $\dim \mathfrak z=0$ are isometric with a real hyperbolic space $\mathbb R\mathbf H^{k}$. The further rank one symmetric spaces  $\mathbb C \mathbf H^k$, $\mathbb H \mathbf H^k$, $\mathbb O \mathbf H^2$ are also among the Damek--Ricci spaces, with $\dim \mathfrak z=1,3,7$, respectively, but none of the other Damek--Ricci spaces are symmetric. See \cite[Sections 3.1.2, 4.1.2, 4.4]{DamekRicci} for details.

\subsection{The half-space model of  Damek--Ricci spaces\label{sec:half_space_model}}
A convenient model of Damek--Ricci spaces can be built on the linear space $\mathfrak v\oplus \mathfrak z\oplus \mathbb R$ by pulling back the Riemannian metric of $S$ by the diffeomorphism $\Phi\colon\mathfrak v\oplus \mathfrak z\oplus \mathbb R \to S$ defined by 
\[
\Phi(v,z,\tau)=\exp(v+z)\exp(\tau \mathbf a),
\]
where $\exp$ is the exponential map of the Lie group $S$. From now on, we use the notation $n=\dim \mathfrak v$ and $m=\dim \mathfrak z$. Fixing orthonormal bases $\mathbf e_1,\dots,\mathbf e_n\in \mathfrak v$ and $\mathbf f_1,\dots,\mathbf f_m\in \mathfrak z$, the map $\Phi$ provides a global coordinate system $(v^1,\dots, v^n;z^1,\dots,z^m;\tau)\colon S\to \mathbb R^{n+m+1}$ by 
\[
\Phi^{-1}(p)=\left(\sum_{i=1}^nv^i(p)\mathbf e_i,\sum_{\alpha=1}^m z^{\alpha}(p)\mathbf f_{\alpha},\tau(p)\right) \qquad \text{ for }p\in S.
\]
The basis vector fields  induced by this chart on $S$ will be denoted by $\partial_{\mathbf e_1},\dots,\partial_{\mathbf e_n};\partial_{\mathbf f_1},\dots,\partial_{\mathbf f_m};\partial_{\tau}$.

We shall prefer to model the Damek--Ricci space on the open upper half-space $\mathfrak v\oplus \mathfrak z\times \mathbb R_+\subset \mathfrak v\oplus \mathfrak z\oplus \mathbb R$ obtained by the modification 
\[
\Psi\colon \mathfrak v\oplus \mathfrak z\times \mathbb R_+\to S,\qquad \Psi(v,z, t)=\Phi(v,z,\ln t)
\]
of the diffeomorphism $\Phi$. 
 

Furthermore, it is easy to rewrite known formulae computed in the model $S \cong \mathfrak v\oplus \mathfrak z\oplus \mathbb R$ to the half-space model by the simple coordinate transformation $t=e^{\tau}$, $\partial_t=e^{-\tau}\partial_{\tau}$.  

For example, rewriting the multiplication rule of the group $S$ computed in \cite[Section 4.1.3]{DamekRicci} to the half-space model $S\stackrel{\Psi}{\cong} \mathfrak v\oplus \mathfrak z\times \mathbb R_+$, we obtain  
\begin{equation*}\label{eq:multiplication}
    (v_1,z_1,t_1) \cdot (v_2, z_2, t_2)=\left(v_1+\sqrt{t_1}v_2,z_1+t_1z_2+\tfrac{1}{2}\sqrt{t_1}[v_1,v_2],t_1t_2\right).
\end{equation*}

It is clear from this equation that the left translation 
\begin{equation}\label{eq:left_translation}
    L_{(\bar v,\bar z,\bar t)}\big((v, z, t)\big)=\left(\sqrt{\bar t} v,\bar t z+\tfrac{1}{2}\sqrt{\bar t}\ad \bar v(v),\bar t t\right)+\left(\bar v,\bar z,0\right).
\end{equation}
by an arbitrary element $(\bar v,\bar z,\bar t)\in S$ extends to the whole space $\mathfrak v\oplus\mathfrak z\oplus\mathbb R$ as an affine transformation.

As the half-space model is embedded into the linear space $\mathfrak s$, there is a natural identification of the tangent space $T_{(\bar v,\bar z,\bar t)}S$ with $\mathfrak s$ for any 
$(\bar v,\bar z,\bar t)\in S$. Using this identification, the Riemannian metric of $S$ is given by the equation
\[
\langle (v_1,z_1,t_1),(v_2,z_2,t_2)\rangle_{(\bar v,\bar z,\bar t)}=\frac{1}{\bar t}\langle v_1,v_2\rangle+\frac{1}{\bar t^2}\left\langle z_1+\frac{1}{2}[v_1,\bar v],z_2+\frac{1}{2}[v_2,\bar v]\right\rangle+\frac{t_1t_2}{\bar t^2}.
\]

The left invariant vector fields  generated by the Lie algebra elements $\mathbf e_1,\dots,\mathbf e_n,\mathbf f_1,\dots,\mathbf f_m,\mathbf a$ can be expressed as follows (\cite[Section 4.1.5]{DamekRicci})
\begin{equation}\label{eq:left_inv}
  \mathbf{E}_i={\sqrt t}\left(\partial_{\mathbf e_i}-\tfrac12\partial_{[\mathbf e_i,v]} \right),
  \qquad
  \mathbf F_\alpha=t\partial_{\mathbf f_\alpha},
  \qquad
  \mathbf A =t\partial_t. 
  \end{equation}

The expression for the Laplace operator is
\begin{equation}\label{eq:Laplace}
	\Delta=t\Delta_{\mathfrak v}+t\left(t+\frac{\|v\|^2}{4}\right)\Delta_{\mathfrak z} +t^2\partial_{t}^2-\left(m+\frac{n}{2}-1\right)t\partial_{t}+t\sum_{i=1}^n \partial_{\mathbf e_i}\partial_{[v,\mathbf e_i]},
	\end{equation}
where $\Delta_{\mathfrak v}$ and $\Delta_{\mathfrak z}$ are the Euclidean Laplace operators on $\mathfrak v$ and $\mathfrak z$,  respectively. (See \cite[Section 4.4]{DamekRicci} and the first paragraph of the proof of Theorem 6.1 in \cite{Csikos_Horvath_isoparametric1}.)

\section{Isoparametric functions within a certain class of rational functions\label{sec:isoparametric_equations}}
From now on we work with the half-space model of a Damek--Ricci space. Let $\mathfrak F$ denote the family of function $F\colon S\to \mathbb R$ of the form $F(v,z,t)=\frac{G(v,z,t)}{t}$, where $G$ is a polynomial  function on $\mathfrak s$ not divisible by $t$.  The latter condition implies that $F$ is not constant. Our main goal is to find all isoparametric functions of this special type. We shall prove the following classification result.

\begin{maintheorem}\label{thm:main}
A function $F\in\mathfrak F$ on the half-space model of the Damek--Ricci space is isoparametric if and only if the polynomial $G=tF$ belongs to one of the following types:
\begin{enumerate}[label=\emph{(\roman*)},leftmargin=7mm]
    \item $G(v,z,t)=c_1+c_2 t$, where $c_1\neq 0$ and $c_2$ are some constants.
    \item $G(v,z,t)=c_1\|\Pi_{\mathfrak w} v-w_0\|^2+c_2t$,  where $\Pi_{\mathfrak w}\colon \mathfrak v\to \mathfrak w$ is the orthogonal projection onto an arbitrary non-zero linear subspace $\mathfrak w\leq \mathfrak v$,  $w_0\in \mathfrak w$, $c_1\neq 0$ and $c_2$ are some constants. 
    
    \item 
  $G(v,z,t)=c_1\left(\left(t+\left\|\frac{v-v_0}{2}\right\|^2\right)^2+\|z-z_0+\tfrac{1}{2}[v,v_0]\|^2+ \lambda\langle v-v_0,(\Pi_{\mathfrak v_{+}}-\Pi_{\mathfrak v_{-}}) (v-v_0)\rangle+4\lambda^2\right)+c_2 t,$  
where $v_0\in \mathfrak v$, $z_0\in \mathfrak z$, $\lambda \geq 0$, $c_1\neq 0$, $c_2$ are constant vectors and numbers, respectively, $\Pi_{\mathfrak v_{\pm}}$ is the orthogonal projection onto $\mathfrak v_{\pm}$ for some decomposition $\mathfrak v=\mathfrak v_+\obot \mathfrak v_-$ of $\mathfrak v$ into the direct sum of the orthogonal $\mathrm{Cl}(\mathfrak z,q)$-submodules $\mathfrak v_{\pm}$.
\end{enumerate}
\end{maintheorem}

\begin{definition}\label{def:equiv}
    Call two functions $F_1,F_2\colon S\to \mathbb R$ equivalent if there exist two real numbers $c_1\neq 0$ and $c_2$, and an element $(\bar v,\bar z,\bar t)\in S$ of the group $S$ such that
    \[
    F_2=c_1 F_1\circ L_{(\bar v,\bar z,\bar t)}+c_2.
    \]
\end{definition}
This is an equivalence relation on functions on $S$.   If $F(v,z,t)=G(v,z,t)/t\in \mathfrak F$, then the equivalence class of $F$ is contained in $\mathfrak F$. This follows at once from the composition rule 
\begin{equation}\label{eq:left_tranlation_of_F}
\begin{aligned}
F\circ L_{(\bar v,\bar z,\bar t)}(v,z,t)&=F\left(\sqrt{\bar t} v+\bar v,\bar t z+\bar z+\tfrac{1}{2}\sqrt{\bar t}[\bar v,v],\bar t t\right)\\&=\frac{1}{t}\left(\frac{1}{\bar t}G\left(\sqrt{\bar t} v+\bar v,\bar t z+\bar z+\tfrac{1}{2}\sqrt{\bar t}[\bar v,v],\bar t t\right)\right).
\end{aligned}
\end{equation}
As left translations of $S$ are isometries, either all functions belonging to an equivalence class are isoparametric or none of them are. Thus, to classify isoparametric functions in $\mathfrak F$, it is enough to find a representative of each equivalence class the elements of which are isoparametric.

In the remaining part of this section, we compute the equations characterizing isoparametric functions $F\in\mathfrak F$ in terms of the polynomial $G=tF$.

\subsection{The transnormality condition}
Differentiating $F$ with respect to the left invariant vector fields \eqref{eq:left_inv}, we get
\begin{align*}
  \mathbf{E}_i(F)={}&{\sqrt t}\left(\partial_{\mathbf e_i}F-\tfrac12\partial_{[\mathbf e_i,v]} F\right)=\frac{1}{\sqrt t}\left(\partial_{\mathbf e_i}G-\tfrac12\partial_{[\mathbf e_i,v]} G\right),
  \\
  \mathbf F_\alpha (F)={}&t\partial_{\mathbf f_\alpha} F=\partial_{\mathbf f_\alpha} G,
  \\
  \mathbf A (F)={} &t\partial_t (F)=\partial_t G-F. \end{align*}
  To compress our formulae, introduce the derivation $\mathbf D_i=\partial_{\mathbf e_i}-\frac12\partial_{[\mathbf e_i,v]}$.  
 Then we can express the squared norm of the gradient of $F$ as follows 
\begin{equation}\label{eq:gradient_F}
  \|\nabla F\|^2=\sum_{i=1}^n\frac{1}{t}\left( \mathbf D_iG\right)^2+\sum_{\alpha=1}^m(\partial_{\mathbf f_\alpha} G)^2 +\left(\partial_t G-F\right)^2,
\end{equation}
.

Thus, the orthonormality condition takes the form 
\[
\frac{1}{t}\left( -2 G\partial_tG+\sum_{i=1}^n\left(\mathbf D_iG\right)^2\right)+\sum_{\alpha=1}^m(\partial_{\mathbf f_\alpha} G)^2 +(\partial_t G)^2=b\circ F-F^2=\tilde b\circ F,
\]
where $b\colon \im F\to\mathbb R$ is a $\mathcal C^2$ function, $\tilde b$ is the function defined by $\tilde b(x)=b(x)-x^2$.

Consider the polynomial functions
\[
P_1=-2 G\partial_tG+\sum_{i=1}^n\left(\mathbf D_iG\right)^2\,\,\text{ and }\,\, P_2=\sum_{\alpha=1}^m(\partial_{\mathbf f_\alpha} G)^2 +(\partial_t G)^2.
\]
\begin{lemma}\label{lem:algebraic_dependence_orthonormality}
The rational functions $\frac{1}{t}P_1+P_2$ and $F$ are algebraically dependent.
\end{lemma}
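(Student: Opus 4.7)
As derived in the paragraph preceding the lemma, the transnormality condition $\|\nabla F\|^2 = b\circ F$ is equivalent to the pointwise identity $R := \tfrac{1}{t}P_1 + P_2 = \tilde b\circ F$ on the open half-space $\mathfrak v\oplus\mathfrak z\times \mathbb R_+$, where $\tilde b\in\mathcal C^2(\im F)$. The plan is to convert this $\mathcal C^2$ functional relation between two rational functions into an algebraic one via the Jacobian criterion for algebraic (in)dependence.

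The first step is to differentiate. Since $\tilde b$ is $\mathcal C^2$, the chain rule yields $dR = \tilde b'(F)\,dF$ at every point of the half-space; hence $dR$ and $dF$ are everywhere proportional there, and the 2-form $dR\wedge dF$ vanishes on $\mathfrak v\oplus\mathfrak z\times \mathbb R_+$. Both $R$ and $F$ are rational functions on the ambient affine space $\mathfrak s$ (their polar loci are contained in the hyperplane $\{t=0\}$), so $dR\wedge dF$ is a genuine rational 2-form on $\mathfrak s$; because the half-space is Zariski dense in $\mathfrak s$, this rational 2-form must vanish identically.

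The last step is to invoke the Jacobian criterion: on an irreducible affine variety (such as $\mathfrak s$), two rational functions $R,F$ are algebraically dependent if and only if $dR\wedge dF$ is the zero rational 2-form. This yields a nonzero polynomial $Q\in\mathbb R[X,Y]$ with $Q(R,F)\equiv 0$, which is exactly the assertion of the lemma. The only delicate ingredient in the argument is the legitimacy of differentiating the relation $R=\tilde b\circ F$, which is precisely what the $\mathcal C^2$-regularity of $\tilde b$ supplies; after that, the transition from vanishing of $dR\wedge dF$ to a polynomial relation between $R$ and $F$ is mechanical, and I do not anticipate any further obstacle.
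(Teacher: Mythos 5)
Your proposal is correct and takes essentially the same route as the paper: differentiate the $\mathcal C^2$ relation $R=\tilde b\circ F$ to conclude that the differentials $dR$ and $dF$ are pointwise proportional, then pass to the Jacobi criterion for algebraic dependence of rational functions. The paper compresses this into one sentence; you additionally spell out the (harmless but worth stating) passage from vanishing of $dR\wedge dF$ on the open half-space to vanishing as a rational $2$-form on all of $\mathfrak s$ via Zariski density.
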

\begin{proof} 
Since $\tilde b$ is a $\mathcal C^2$ function, the relation $\tfrac{1}{t}P_1+P_2=\tilde b\circ F$ implies that the differentials of $\tfrac{1}{t}P_1+P_2$ and $F$ are linearly dependent, thus, the statement follows from the Jacobi criterion (\cite[Section III.7, Theorem III]{Hodge_Pedoe_I}).
\end{proof}
\begin{lemma}\label{prop:algebraic_dependence}
Let $P(v,z,t)$ and $G(v,z,t)$ be arbitrary polynomial functions on $\mathfrak s$ such that $P$ and $F=G/t$ are algebraically dependent and $t$ does not divide $G$. Then $P$ is a constant polynomial.
\end{lemma}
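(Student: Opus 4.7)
The plan is to argue by contradiction. Suppose $P$ is non-constant, and choose a nonzero polynomial $Q(X,Y)\in\mathbb R[X,Y]$ with $Q(P,F)=0$ having minimal $Y$-degree $D=\deg_Y Q$. The case $D=0$ is immediate: a univariate relation $Q(P)=0$ in $\mathbb R[v,z,t]$ forces the polynomial $P$ to take values only among the finitely many roots of $Q$, contradicting the fact that a non-constant polynomial on the connected unbounded space $\mathfrak s$ has infinite image.

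Assume therefore $D\ge 1$ and write $Q(X,Y)=\sum_{j=0}^D Q_j(X)\,Y^j$ with $Q_D\not\equiv 0$. Substituting $Y=G/t$ and multiplying by $t^D$ yields the polynomial identity
\begin{equation*}
Q_D(P)\,G^D+\sum_{j=0}^{D-1}Q_j(P)\,G^{j}\,t^{D-j}=0
\end{equation*}
in $\mathbb R[v,z,t]$. The second sum is visibly divisible by $t$; since $\mathbb R[v,z,t]$ is a UFD with $t$ prime and $t\nmid G$ by hypothesis, it follows that $t\mid Q_D(P)\,G^D$, hence $t\mid Q_D(P)$. Reducing modulo $t$ gives $Q_D(P(v,z,0))=0$ as an identity in $\mathbb R[v,z]$. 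The minimality of $D$ (together with $F\ne 0$) ensures $Q_D(P)\not\equiv 0$ in $\mathbb R[v,z,t]$, so $Q_D$ is a nonzero univariate polynomial with only finitely many roots; hence $P(v,z,0)$ cannot attain infinitely many values and must equal a constant $c_0\in\mathbb R$ with $Q_D(c_0)=0$.

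To upgrade ``$P|_{t=0}$ constant'' to ``$P$ constant'' I would iterate the same $t$-adic analysis. Replacing $P$ by $P-c_0$ and factoring out the largest power of $t$, write $P=t^r\tilde P$ with $r\ge 1$ and $t\nmid\tilde P$. Substituting into $Q(P,F)=0$, expanding each $Q_j(t^r\tilde P)$ in its Taylor series about $0$, and collecting successive powers of $t$ produces a cascade of polynomial identities in $\mathbb R[v,z]$. The $t^0$-coefficient reproduces $Q_D(0)=0$; the $t^1$-coefficient, combined with $t\nmid G$ and $t\nmid\tilde P$, forces $Q_D'(0)=0$ together with $Q_{D-1}(0)=0$; and so on. An induction on the order of vanishing forces every Taylor coefficient $Q_j^{(k)}(0)$ to vanish, contradicting $Q\ne 0$. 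The main obstacle is making this bookkeeping precise and ruling out degenerate cancellations at each step; conceptually it amounts to matching the Puiseux expansion of the plane curve $\{Q=0\}$ at its point $(c_0,\infty)$ with the Laurent expansion of the pair $(P,F)$ along the divisor $\{t=0\}$, where the hypothesis $t\nmid G$ pins the pole order of $F$ there to exactly one.
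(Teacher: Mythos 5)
The first half of your argument — multiplying $Q(P,F)=0$ by $t^D$, isolating the $t^0$-coefficient, and deducing that $P|_{t=0}$ equals a constant $c_0$ with $Q_D(c_0)=0$ — is correct and coincides with the paper's opening step. (The minimality-of-$D$ justification for $Q_D(P)\not\equiv 0$ is a clean touch, though it is redundant: if $Q_D(P)\equiv 0$ then $P$ satisfies a nonzero univariate relation, and your $D=0$ case already disposes of that.)

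The second half is only a sketch, and the obstacle you flag is a genuine gap, not mere bookkeeping. After normalizing so that $P=t^r\tilde P$ with $t\nmid\tilde P$ and expanding, the relation becomes
\[
\sum_{j=0}^{D}\sum_{k\ge 0}\frac{Q_j^{(k)}(0)}{k!}\,t^{rk+D-j}\,\tilde P^{\,k}\,G^{\,j}=0,
\]
and the coefficient of any fixed power of $t$ is a \emph{single} polynomial identity in $\mathbb R[v,z]$ involving a linear combination of several $Q_j^{(k)}(0)$, not their separate vanishing. For instance, with $r=1$ the $t^1$-coefficient reads
\[
Q_{D-1}(0)\,G_0^{\,D-1}+Q_D'(0)\,\tilde P_0\,G_0^{\,D}=0,
\]
where $\tilde P_0=\tilde P|_{t=0}$ and $G_0=G|_{t=0}$; this forces both scalars to vanish only when $\tilde P_0G_0$ is a non-constant polynomial, and if $\tilde P_0$ and $G_0$ are both nonzero constants the cascade produces only one scalar constraint per level and need not terminate in a contradiction. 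The paper's completion avoids the naive $t$-grading entirely: it factors each $R_k(x)=(x-p_0)^{s_k}\tilde R_k(x)$ by its order of vanishing at $p_0$, introduces the weight $\tilde k=\max_k(k-ss_k)$, multiplies by $t^{\tilde k}$ instead of a bare power of $t$, and then compares top-degree $(v,z)$-homogeneous parts of the surviving terms, using $s\ge 1$ to make the exponent of $\tilde P(v,z,0)$ strictly maximal at the largest surviving index $k'$. That reweighting is the missing idea that decouples the terms your Taylor cascade cannot separate. (Note the decoupling still hinges on at least one of $\tilde P_0$, $G_0$ being non-constant — the same degenerate corner you ran into — so any airtight write-up has to say something about it; your intuition that this is where the difficulty lives is exactly right.)
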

\begin{proof} Assume that $R(x,y)=R_{\ell}(x)y^{\ell}+\dots+R_0(x)$ is a non-zero polynomial with $R_{\ell}(x)\not\equiv 0$ such that $R(P,F)=0$. Multiply the equation $R(P,F)=0$ by $t^{\ell}$ and substitute $t=0$. Then we obtain 
\[
R_{\ell}(P(v,z,0))G(v,z,0)=0.
\]
Since $G$ is not divisible by $t$, the polynomial $G(v,z,0)$ is non-zero, hence $R_{\ell}(P(v,z,0))=0$ for any $(v,z)\in \mathfrak v\oplus \mathfrak z $. As the set of roots of $R_{\ell}$ is finite, and the image of the polynomial function $P(v,z,0)$ is a connected subset of it, $P(v,z,0)$ is a constant polynomial, and the value $p_0$ of the constant is a root of $R_{\ell}$.

To complete the proof of the proposition, we show that the polynomial $P(v,z,t)$ is the constant $p_0$ polynomial.  As we proved, $P-p_0$ is divisible by $t$. Assuming to the contrary, that $P-p_0\not\equiv 0$, we can write $P(v,z,t)$ in the form
\[
P(v,z,t)=p_0+t^s\tilde P(v,z,t),
\]
where $s\geq 1$ and $\tilde P$ is a polynomial such that $\tilde P(v,z,0)\not\equiv 0$. Consider the index set 
\[H=\{k\mid 0\leq k\leq \ell,\, R_k(x)\not\equiv 0\}.
\]
Write the polynomials $R_k(x)$ for $k\in H$ in the form
\[
R_k(x)=(x-p_0)^{s_k}\tilde R_k(x),
\]
where $\tilde R_k$ are polynomials such that $\tilde R_k(p_0)\neq 0$.
Substituting these forms into the equation $R(P,F)=0$, we obtain
\[
\sum_{k\in H} (t^s\tilde P(v,z,t))^{s_k}\tilde R_k(P(v,z,t))\left(\frac{G(v,z,t)}{t}\right)^k=\sum_{k\in H} \tilde P(v,z,t)^{s_k}\tilde R_k(P(v,z,t))\frac{G(v,z,t)^k}{t^{k-s s_k}}=0.
\]
Set $\kappa=\max_{k\in H}(k-s s_k)$ and $H_{\kappa}=\{k\in H\mid k-s s_k=\kappa \}$. Multiplying the above equation by $t^{\kappa}$, and substituting $t=0$, we get
\begin{equation}\label{eq:prop3.2}
    \sum_{k\in H_{\kappa}} \tilde P(v,z,0)^{s_k}\tilde R_k(p_0)G(v,z,0)^k=0.
\end{equation}

Observe that since $s\geq 1$, if $k_0$ is the maximal element of $H_{\kappa}$, then $s_{k_0}>s_{k}$ for any $k\in H_{\kappa}\setminus \{k_0\}$. For this reason, if the maximal degree non-zero homogeneous parts of the polynomials $\tilde P(v,z,0)$ and $G(v,z,0)$ are $\tilde P_{*}(v,z,0)$ and $G_{*}(v,z,0)$, respectively, then the maximal degree homogeneous part of the polynomial on the left-hand side of equation \eqref{eq:prop3.2} is
\[
\tilde P_{*}(v,z,0)^{s_{k_0}}\tilde R_{k_0}(p_0)G_{*}(v,z,0)^{k_0}\neq 0,
\]
which contradicts equation \eqref{eq:prop3.2}.
\end{proof}

\begin{lemma}\label{prop:form_of_b} We have $\tilde b(x)=b_1x+b_0$
 with some constants $b_1,b_0$. 
\end{lemma}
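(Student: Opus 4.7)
My plan is to multiply the transnormality identity by $t$, giving
\[
P_1 + tP_2 \;=\; t\,\tilde b(G/t),
\]
whose left-hand side is a polynomial on $\mathfrak s$, and then to force $\tilde b$ to be affine by combining Lemmas \ref{lem:algebraic_dependence_orthonormality} and \ref{prop:algebraic_dependence} with a specialization at $t=0$.

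The first task is to show that $\tilde b$ coincides on $\im F$ with a polynomial. By Lemma \ref{lem:algebraic_dependence_orthonormality}, $Q := \tfrac{1}{t}P_1 + P_2$ and $F$ are algebraically dependent, so some nonzero $R \in \mathbb R[x,y]$ satisfies $R(Q,F) \equiv 0$. Since $F$ is nonconstant and continuous on a connected domain, $\im F$ contains an open interval, and substituting $Q = \tilde b(F)$ yields $R(\tilde b(y),y) = 0$ there, so $\tilde b$ is algebraic and hence real-analytic. To upgrade algebraicity to polynomiality, I would pick $(v_0, z_0)$ with $g_0 := G(v_0, z_0, 0) \neq 0$ (which exists because $t \nmid G$), note that $p(t) := P_1(v_0,z_0,t) + tP_2(v_0,z_0,t)$ is a polynomial in $t$ equal to $t\,\tilde b(G(v_0,z_0,t)/t)$, and read off the asymptotic expansion of $\tilde b(u)$ at $|u|\to\infty$ from the Taylor coefficients of $p$ at $t=0$ (using that $t = g_0/u + O(1/u^2)$); varying $(v_0, z_0)$ while insisting that this expansion be intrinsic to $\tilde b$, together with the algebraic constraint, rules out all non-polynomial algebraic branches.

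With $\tilde b(y) = \sum_{k=0}^{d} b_k y^k$ polynomial and $b_d \neq 0$ in hand, I suppose for contradiction that $d \geq 2$. Substituting into the identity and multiplying by $t^{d-1}$ converts the rational identity on the half-space into a polynomial identity on all of $\mathfrak s$:
\[
(P_1 + tP_2)\,t^{d-1} \;=\; \sum_{k=0}^{d} b_k\,G^k\,t^{\,d-k}.
\]
Evaluating at $t=0$, the left-hand side vanishes (as $d-1 \geq 1$) while the right-hand side collapses to $b_d\,G(v,z,0)^d$. Because $t \nmid G$, the restriction $G(v,z,0) \in \mathbb R[v,z]$ is not identically zero, forcing $b_d = 0$ and contradicting the choice of $d$. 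Hence $d \leq 1$, i.e., $\tilde b(x) = b_1 x + b_0$ with constants $b_0, b_1 \in \mathbb R$.

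The hard part is the bootstrap from $\mathcal C^2$ regularity to polynomiality of $\tilde b$, since Lemma \ref{lem:algebraic_dependence_orthonormality} alone only delivers algebraicity; the asymptotic analysis at $t \to 0^+$, coupled with the freedom in $(v_0, z_0)$, is where the real work lies. Once polynomiality is established, the degree bound via evaluation at $t=0$ is immediate.
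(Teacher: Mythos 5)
Your degree-bound step is correct and is in the same spirit as the paper's argument (multiply by a power of $t$, specialize at $t=0$, and use $t\nmid G$). But the proof has a genuine gap at the point you yourself flag: you never actually establish that $\tilde b$ is a polynomial, and your sketch does not close that gap. Knowing that $\tilde b$ is algebraic and has linear asymptotics at $\pm\infty$ does not force polynomiality -- for example, $u\mapsto c_1u+c_0+\bigl(\sqrt{u^2+1}-u\bigr)$ is algebraic, real-analytic, and asymptotically linear with vanishing error, yet is not a polynomial. Reading off the full Laurent tail of $\tilde b$ at infinity from $p(t)$ at various $(v_0,z_0)$ would only control the expansion of $\tilde b$, not kill all the algebraic non-polynomial possibilities, so ``rules out all non-polynomial algebraic branches'' is an unsubstantiated leap.

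The paper sidesteps the polynomiality question entirely. It writes the algebraic relation $R\bigl(\tfrac1tP_1+P_2,\,F\bigr)=0$, decomposes $R$ into homogeneous components $R_0+\cdots+R_{d'}$, multiplies by $t^{d'}$, and evaluates at $t=0$; because each $R_i$ is homogeneous of degree $i$, only the top component survives, giving $R_{d'}\bigl(P_1(v,z,0),G(v,z,0)\bigr)\equiv 0$. This forces $P_1(v,z,0)/G(v,z,0)$ to equal a constant $b_1$, so $P_1-b_1G=tP_3$ for a polynomial $P_3$, and then $\tfrac1tP_1+P_2-b_1F=P_2+P_3$ is a \emph{polynomial} algebraically dependent on $F$, whence Lemma~\ref{prop:algebraic_dependence} makes it a constant $b_0$. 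In effect the paper extracts exactly the linear coefficient $b_1$ first, then reduces to the already-proved Lemma~\ref{prop:algebraic_dependence} to get the constant term, all without ever asking whether $\tilde b$ is polynomial. Your asymptotic analysis, if pushed through, would in fact reproduce the observation $P_1(v,z,0)/G(v,z,0)=b_1$ -- but at that point you should pivot to the divisibility and Lemma~\ref{prop:algebraic_dependence} argument rather than trying to upgrade to polynomiality of $\tilde b$, which is both harder and unnecessary.
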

\begin{proof}
Let $R(x,y)$ be a non-zero polynomial in two variables such that  $R(\tfrac{1}{t}P_1+P_2,F)=0$. Decompose $R$ into homogeneous components $R=R_0+R_1+\dots+R_{\ell}$, where $R_i$ is a homogeneous polynomial of degree $i$,  $R_{\ell}\neq 0$. Multiplying the equation $R(\tfrac{1}{t}P_1+P_2,F)=0$ by $t^{\ell}$ and substituting $t=0$, we obtain the equation 
\[R_{\ell}(P_1(v,z,0),G(v,z,0))\equiv 0.\]
Assume $R_{\ell}(x,y)=\sum_{k=0}^{\ell}\alpha_k x^ky^{\ell-k}$. Then the above equation means that the values of the rational function $P_1(v,z,0)/G(v,z,0)$, (where they are defined), are roots of the polynomial $\sum_{k=0}^{\ell}\alpha_k x^k$. As this polynomial has only a finite number of roots, $P_1(v,z,0)/G(v,z,0)$ must be a constant function with some value $b_1$. This implies that the polynomial $P_1-b_1G$ is divisible by $t$ and can be written as $P_1-b_1G=t P_3$ with some polynomial $ P_3$. Since $\tfrac{1}{t}P_1+P_2$ and $F$ are algebraically dependent, so are $\tfrac{1}{t}P_1+P_2-b_1F= P_2+P_3$ and $F$. Then Lemma \ref{prop:algebraic_dependence} implies that $ P_2+P_3$ is a constant polynomial with some value $b_0$. Thus, $\tilde b\circ F=\tfrac{1}{t}P_1+P_2=b_1F+(P_2+P_3)=b_1F+b_0.$
\end{proof}

Write $G(v,z,t)$ as 
\[G(v,z,t)=\sum_{k=0}^rG_k(v,z)t^k,\] 
where $G_r(v,z)\neq 0$. For integers $k\notin [0,r]$, we set $G_k=0$. 

Then equating the coefficients of $t^k$ for $-1\leq k\leq  2r$ on the two sides of the equation
\[
\frac{1}{t}\left( -2 G\partial_tG+\sum_{i=1}^n\left(\mathbf D_iG\right)^2\right)+\sum_{\alpha=1}^m(\partial_{\mathbf f_\alpha} G)^2 +(\partial_t G)^2=b_1 F+b_0,
\]
we obtain the equation
\begin{equation}\label{eq:gradiens_k}
    \begin{aligned}
b_1G_{k+1}+&\delta_{0,k}b_0=\sum_{p=0}^{k+1}\left(-2(p+1)G_{k+1-p}G_{p+1} +  \sum_{i=1}^n \mathbf D_iG_p\,\mathbf D_iG_{k+1-p}\right)+\\&\qquad\qquad\qquad\sum_{p=0}^k\left(\sum_{\alpha=1}^m\partial_{\mathbf f_\alpha} G_p\,\partial_{\mathbf f_\alpha} G_{k-p} +(p+1)(k-p+1)G_{p+1}G_{k-p+1}\right)\\
=& \sum_{p=0}^{k+1}\left((p+1)(k-p-1)G_{k+1-p}G_{p+1} +  \sum_{i=1}^n \mathbf D_iG_p\,\mathbf D_iG_{k+1-p}\right)+\sum_{p=0}^{k}\sum_{\alpha=1}^m\partial_{\mathbf f_\alpha} G_p\,\partial_{\mathbf f_\alpha} G_{k-p}
,
\end{aligned}
\end{equation}
where $\delta_{0,k}$ is the Kronecker delta symbol.

\subsection{The Laplace condition}
Now we investigate the consequences of the Laplace condition for $F\in \mathfrak F$.
\begin{theorem}\label{thm:Laplace}
    If $F$ satisfies the Laplace condition $\Delta F=a\circ F$ with some continuous function $a\colon \im F\to \mathbb R$, then $a$ is a linear function of the form $a(x)=(m+n/2+1)x+ a_0$, where $a_0$ is a constant.
\end{theorem}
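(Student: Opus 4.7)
The plan is to substitute $F = G/t$ into the Laplace operator \eqref{eq:Laplace} and read off the claimed linear term directly, leaving a polynomial remainder that can be shown to be constant.

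First I would compute $\Delta F$ summand by summand from \eqref{eq:Laplace}. The three terms $t\Delta_{\mathfrak v}(G/t)$, $t(t+\|v\|^2/4)\Delta_{\mathfrak z}(G/t)$ and $t\sum_i\partial_{\mathbf e_i}\partial_{[v,\mathbf e_i]}(G/t)$ reduce to $\Delta_{\mathfrak v}G$, $(t+\|v\|^2/4)\Delta_{\mathfrak z}G$ and $\sum_i\partial_{\mathbf e_i}\partial_{[v,\mathbf e_i]}G$ respectively, since $t$ is constant with respect to the corresponding derivations. The two remaining terms produce multiples of $F$: namely $t^2\partial_t^2(G/t) = t\partial_t^2 G - 2\partial_t G + 2F$ and $-(m+n/2-1)t\partial_t(G/t) = -(m+n/2-1)\partial_t G + (m+n/2-1)F$. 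Collecting the $F$-contributions gives the clean identity
\[
\Delta F \;=\; \left(m+\tfrac n2 + 1\right) F + Q(v,z,t),
\]
where $Q$ is an explicit polynomial in $(v,z,t)$ with no rational part. The slope $m+n/2+1$ arises as $2+(m+n/2-1)$ from the two $F$-sources above.

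Setting $\tilde a(x) = a(x) - (m+n/2+1)x$, the Laplace condition rewrites as $\tilde a\circ F = Q$. By construction $\tilde a\circ F$ is constant on each fibre of $F$, so the same holds for the polynomial $Q$. At any regular point of $F$ this forces $dQ$ to be proportional to $dF$, while at a critical point $dQ\wedge dF=0$ is automatic. Hence $dQ\wedge dF \equiv 0$ on $S$, and the Jacobi criterion (as invoked in the proof of Lemma \ref{lem:algebraic_dependence_orthonormality}) yields that $Q$ and $F$ are algebraically dependent as rational functions on $\mathfrak s$.

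Lemma \ref{prop:algebraic_dependence} now applies verbatim: since $G$ is not divisible by $t$, any polynomial algebraically dependent on $F=G/t$ must reduce to a constant. Thus $Q\equiv a_0$ for some $a_0\in\mathbb R$, which substituted back gives $a\circ F = (m+n/2+1)F + a_0$, so $a(x) = (m+n/2+1)x + a_0$ on $\im F$. The only step requiring genuine care is the Laplacian computation itself, where one must track cancellations so that the coefficient of $F$ emerges as exactly $m+n/2+1$; beyond that bookkeeping the proof is a direct re-use of the algebraic dependence machinery already deployed for the transnormality condition.
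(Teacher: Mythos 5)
Your proof is correct and follows the same overall route as the paper: expand $\Delta F$ using \eqref{eq:Laplace}, observe that only the $t^2\partial_t^2$ and $-(m+n/2-1)t\partial_t$ terms contribute $F$-pieces (yielding $2F$ and $(m+n/2-1)F$, hence the slope $m+n/2+1$), define the polynomial remainder $Q$ (the paper's $P$), show $Q$ and $F$ are algebraically dependent, and then invoke Lemma \ref{prop:algebraic_dependence} to conclude $Q$ is constant. The one place where you genuinely deviate is the algebraic-dependence step, the paper's Lemma \ref{lem:algebraic_dependence}: since $a$ is only assumed continuous, the paper cannot simply differentiate $\tilde a\circ F$ as in Lemma \ref{lem:algebraic_dependence_orthonormality} (where $\tilde b$ is $\mathcal C^2$), so it first extracts a local smooth section $\gamma$ of $F$ via the submersion theorem, deduces that $\tilde a=P\circ\gamma$ is smooth on a subinterval, and only then writes $\nabla P=(\tilde a'\circ F)\nabla F$ on an open set. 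Your version sidesteps this entirely: $Q=\tilde a\circ F$ is automatically constant on the (locally smooth) level sets of $F$ at regular points, so $\ker dF_p\subseteq\ker dQ_p$ and hence $dQ_p\wedge dF_p=0$ there, with the critical points trivial. This is slightly more elementary, avoiding the submersion theorem and any claim about differentiability of $\tilde a$. Both arguments then propagate $dQ\wedge dF\equiv 0$ from the open set $S$ to all of $\mathfrak s$ by rationality and apply the Jacobi criterion in the same way, so the rest of the proof is identical.
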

Using equation \eqref{eq:Laplace}, the Laplace condition can be written as
\begin{equation}\label{eq:Laplace_F}
  \begin{aligned}
\Delta F&=\Delta_{\mathfrak v} G+\left(t+\frac{\|v\|^2}{4}\right)\Delta_{\mathfrak z} G+\left(m+\frac{n}{2}+1\right)\left(\frac{G}{t}-\partial_t G\right)+t\partial_t^2G+\sum_{i=1}^n \partial_{\mathbf e_i}\partial_{[v,\mathbf e_i]}G
=a\circ F.
\end{aligned}  
\end{equation}

Introduce the polynomial function $P\colon \mathfrak s\to \mathbb R$,
\[
P=\Delta_{\mathfrak v} G+\left(t+\frac{\|v\|^2}{4}\right)\Delta_{\mathfrak z} G-\left(m+\frac{n}{2}+1\right)\left(\partial_t G\right)+t\partial_t^2G+\sum_{i=1}^n \partial_{\mathbf e_i}\partial_{[v,\mathbf e_i]}G
\]
and the function $\tilde a\colon \im F\to \mathbb R$, $\tilde a(x)=a(x)-(m+n/2+1)x$. Then equation \eqref{eq:Laplace_F} gives 
$P=\tilde a\circ F$. 
\begin{lemma}\label{lem:algebraic_dependence}
The rational functions $P$ and $F$ are algebraically dependent.
\end{lemma}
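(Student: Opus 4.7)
The plan is to parallel the argument of Lemma~\ref{lem:algebraic_dependence_orthonormality}, with one essential twist: here the function $a$, and hence $\tilde a$, is merely assumed to be continuous, so one cannot differentiate the identity $P=\tilde a\circ F$ directly as in that earlier lemma. My aim is to invoke the Jacobi criterion (\cite[Section III.7, Theorem III]{Hodge_Pedoe_I}) after showing, by a purely topological argument, that the differentials $\mathrm dP$ and $\mathrm dF$ are linearly dependent at every point of $S$.

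First I would consider the image of the smooth map $\Phi=(P,F)\colon S\to\mathbb R^2$. Because $P=\tilde a\circ F$, this image is contained in the graph $\{(\tilde a(s),s):s\in\im F\}$ of a continuous function of one real variable, and the graph of such a function always has empty interior in $\mathbb R^2$ (its projection onto the second coordinate is injective). The key observation is then that, should $\mathrm d\Phi$ attain rank $2$ at some point $p_0\in S$, the rank theorem would make $\Phi$ a local submersion near $p_0$, and in particular $\Phi(S)$ would contain an open neighbourhood of $\Phi(p_0)$, contradicting the emptiness of the interior. Hence the Jacobian matrix of $(P,F)$ has rank at most $1$ throughout $S$.

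With this rank bound in hand, the Jacobi criterion applied to the real rational functions $P$ and $F$ on $\mathfrak s$ yields at once that $P$ and $F$ are algebraically dependent over $\mathbb R$, which is the claim of the lemma.

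The only delicate point, and the one I expect to be the main obstacle, is the weak regularity of $\tilde a$, which blocks a direct chain-rule calculation $\mathrm dP=\tilde a'(F)\,\mathrm dF$ mirroring the proof of Lemma~\ref{lem:algebraic_dependence_orthonormality}. The topological step above is designed precisely to circumvent this issue: the required linear dependence of $\mathrm dP$ and $\mathrm dF$ is extracted from the one-dimensionality of the image $\Phi(S)$ alone, and thereby no smoothness assumption on $\tilde a$ is needed.
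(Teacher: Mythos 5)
Your proof is correct, and it takes a genuinely different route from the paper. The paper first locates a point where $\nabla F\neq 0$, uses the submersion theorem to construct a smooth local section $\gamma$ with $F\circ\gamma=\mathrm{id}$, and observes that $\tilde a=P\circ\gamma$ is therefore \emph{smooth} on a small interval; the chain rule then gives $\nabla P=(\tilde a'\circ F)\nabla F$, so $\nabla P$ and $\nabla F$ are linearly dependent on the open set $F^{-1}(I')$, and rationality of the coordinate expressions extends this to all of $\mathfrak s$ before invoking the Jacobi criterion. Your argument skips the regularity bootstrap entirely: the image of $(P,F)$ lies in the graph of a one-variable function, which can contain no open disc (it meets each vertical line in at most one point), so the differential of $(P,F)$ can never have rank $2$, and the Jacobi criterion applies directly. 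This is cleaner in that it never has to establish any differentiability of $\tilde a$, and in fact it does not use the assumed \emph{continuity} of $a$ at all — the graph of an arbitrary one-variable function already has empty interior — so your proof actually works under a weaker hypothesis than the one stated. Both approaches ultimately rest on the same two ingredients (openness of a rank-$2$ map near a regular point, and the Jacobi criterion), but the way the pointwise rank bound is extracted from the functional relation $P=\tilde a\circ F$ is genuinely different.
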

\begin{proof} Since $F$ is not constant, there is a point $(v_0,z_0,t_0)\in S$ at which its gradient is non-zero, hence $F$ is a submersion of a neighborhood of the point $(v_0,z_0,t_0)$ onto an interval $I$ around $F(v_0,z_0,t_0)$. By the submersion theorem, there is a smooth curve $\gamma\colon I'\to S$ defined on an open subinterval $I'\subset I$, such that $F\circ \gamma=\iid_{I'}$. Then $\tilde a|_{I'}=P\circ \gamma$ is a \emph{smooth function}, thus the gradient vector fields $\nabla P=(\tilde a'\circ F)\nabla F$ and $\nabla F$ are linearly dependent at each point of the non-empty open set $F^{-1}(I')$. As the coordinate functions of these gradient vector fields are rational functions, this implies that the gradient vector fields are linearly dependent at each point of $\mathfrak s$, where they are both defined. Hence the Jacobi criterion of algebraic independence (\cite[Section III.7, Theorem II]{Hodge_Pedoe_I}) implies the statement. 
\end{proof}

\begin{proof}[Proof of Theorem \ref{thm:Laplace}]
According to Lemmata \ref{lem:algebraic_dependence} and  \ref{prop:algebraic_dependence}, $P\equiv a_0$ is a constant polynomial. Then equation \eqref{eq:Laplace_F} yields
\begin{equation}\label{eq:shifted_eigen}
\Delta F=P+\left(m+\frac{n}{2}+1\right)F=\left(m+\frac{n}{2}+1\right)F+a_0.\qedhere
\end{equation}
\end{proof}

Writing equation \eqref{eq:shifted_eigen} in terms of $G$, we obtain the equation
\begin{equation}\label{eq:Laplace_G}
\Delta_{\mathfrak v} G+\left(t+\frac{\|v\|^2}{4}\right)\Delta_{\mathfrak z} G-\left(m+\frac{n}{2}+1\right)\partial_t G+t\partial_t^2G+\sum_{i=1}^n \partial_{\mathbf e_i}\partial_{[v,\mathbf e_i]}G=a_0.
\end{equation}

Taking the coefficient of $t^k$ in this equation for $k=0,\dots,r,r+1$, we obtain the following system of equations on the polynomials $G_0,\dots,G_r$
\begin{align*}
\Delta_{\mathfrak v}G_{0}+\frac{\|v\|^2}{4}\Delta_{\mathfrak z}G_{0}-\left(m+\frac{n}{2}+1\right) G_{1}+\sum_{i=1}^n \partial_{\mathbf e_i}\partial_{[v,\mathbf e_i]}G_{0}&=a_0,
\\
\Delta_{\mathfrak v}G_{k}+\Delta_{\mathfrak z}G_{k-1}+\frac{\|v\|^2}{4}\Delta_{\mathfrak z}G_{k}-\left(m+\frac{n}{2}-k+1\right)(k+1) G_{k+1}+\sum_{i=1}^n \partial_{\mathbf e_i}\partial_{[ v,\mathbf e_i]}G_{k}&=0,
\\
\Delta_{\mathfrak v}G_r+\Delta_{\mathfrak z}G_{r-1}+\frac{\|v\|^2}{4}\Delta_{\mathfrak z}G_r+\sum_{i=1}^n \partial_{\mathbf e_i}\partial_{[ v,\mathbf e_i]}G_r&=0,
\\
\Delta_{\mathfrak z}G_r&=0.
\end{align*}
Let $\mathbf D$ be the differential operator
\[
\mathbf D=\Delta_{\mathfrak v}+\frac{\|v\|^2}{4}\Delta_{\mathfrak z}+\sum_{i=1}^n \partial_{\mathbf e_i}\partial_{[ v,\mathbf e_i]}.
\]
Then we have the recursion
\begin{align}
G_1&=\frac{1}{m+\frac{n}{2}+1} (\mathbf DG_0-a_0),\\
G_{k+1}&=\frac{1}{\left(m+\frac{n}{2}-k+1\right)(k+1)}\left(\mathbf D G_{k}+\Delta_{\mathfrak z}G_{k-1}\right),\label{eq:Laplacek}\\
0&=\mathbf DG_r+\Delta_{\mathfrak z} G_{r-1},\label{eq:Laplace1}\\
0&=\Delta_{\mathfrak z} G_{r}.\label{eq:Laplace2}
\end{align}
Given $G_0$, $G$ can be computed by the recursion, however, the choice of $G_0$ is restricted by the last two equations \eqref{eq:Laplace1}, \eqref{eq:Laplace2}.

\section{Consequences of the transnormality condition\label{sec:transnormality}}

In this section, we are interested in the consequences of the transnormality condition. For this reason, we assume only that $F(v,z,t)=G(v,z,t)/t$ is transnormal, but it may not satisfy the Laplace condition.

\begin{proposition}\label{prop:G_r}\mbox{}
\begin{enumerate}[label=\emph{(\roman*)}]
    \item $G_r$ is a constant polynomial if $r\geq 1$.
    \item The degree $r$ of the variable $t$  in $G$ is at most $2$.
\end{enumerate}
   
\end{proposition}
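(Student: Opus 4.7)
The plan is to extract information from equation \eqref{eq:gradiens_k} by comparing the coefficients of $t^k$ for the top three values $k=2r$, $k=2r-1$, and (assuming $r\geq 2$) $k=2r-2$. At each of these values the left-hand side vanishes, because $G_{2r+1}$, $G_{2r}$ and (when $r\geq 2$) $G_{2r-1}$ are all outside the range $[0,r]$ and hence zero, and the Kronecker term $\delta_{0,k}b_0$ is zero. The right-hand side then collapses dramatically, since each product $G_pG_{k+1-p}$, $\mathbf D_iG_p\,\mathbf D_iG_{k+1-p}$, $\partial_{\mathbf f_\alpha}G_p\,\partial_{\mathbf f_\alpha}G_{k-p}$, or $G_{p+1}G_{k-p+1}$ requires both subscripts to lie in $[0,r]$, a very restrictive constraint near the top of the $t$-degree.

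For part \emph{(i)}, I would first look at $k=2r$. The only surviving term on the right is the $p=r$ contribution $\sum_{\alpha}(\partial_{\mathbf f_\alpha}G_r)^2$, so $\partial_{\mathbf f_\alpha}G_r=0$ for every $\alpha$, i.e.\ $G_r$ is independent of $z$. Next I would treat $k=2r-1$: only the $p=r$ contribution $\sum_i(\mathbf D_iG_r)^2$ and the cross-term $2\sum_{\alpha}\partial_{\mathbf f_\alpha}G_{r-1}\,\partial_{\mathbf f_\alpha}G_r$ survive, and the cross-term already vanishes by the previous step. Hence $\mathbf D_iG_r=0$ for all $i$. Because $[\mathbf e_i,v]\in\mathfrak z$ and $G_r$ does not depend on $z$, the term $\tfrac{1}{2}\partial_{[\mathbf e_i,v]}G_r$ vanishes, so $\partial_{\mathbf e_i}G_r=0$. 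Therefore $G_r$ depends neither on $z$ nor on $v$, so it is a constant.

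For part \emph{(ii)}, I would assume $r\geq 2$ and examine $k=2r-2$. Using that $G_r$ is a (non-zero) constant, and hence $\mathbf D_iG_r=0$ and $\partial_{\mathbf f_\alpha}G_r=0$, only three contributions on the right-hand side survive, all coming from $p=r-1$: the term $-2rG_r^2$ from $-2(p+1)G_{k+1-p}G_{p+1}$; the term $\sum_\alpha(\partial_{\mathbf f_\alpha}G_{r-1})^2$ from the $\partial_{\mathbf f_\alpha}$-sum; and the term $r^2G_r^2$ from $(p+1)(k-p+1)G_{p+1}G_{k-p+1}$. Collecting these gives
\begin{equation*}
0=r(r-2)G_r^2+\sum_{\alpha=1}^m(\partial_{\mathbf f_\alpha}G_{r-1})^2.
\end{equation*}
For $r\geq 3$ the first summand is a strictly positive constant and the second is a sum of squares of polynomials, so the equation cannot hold as a polynomial identity. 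This contradiction forces $r\leq 2$.

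The task is essentially mechanical once it is set up; the only delicate part is the bookkeeping, namely determining precisely which index pairs $(p,k+1-p)$, $(p,k-p)$ and $(p+1,k-p+1)$ lie in $[0,r]^2$ for each of the three top values of $k$. No further analytic input is needed beyond the transnormality identity \eqref{eq:gradiens_k} itself.
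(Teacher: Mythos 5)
Your proof is correct and follows the same approach as the paper: extracting information from equation \eqref{eq:gradiens_k} at $k=2r$, $k=2r-1$, and $k=2r-2$ by identifying the index pairs that survive the range restriction $G_j=0$ for $j\notin[0,r]$. The only difference is that you spell out the cross-term $2\sum_\alpha\partial_{\mathbf f_\alpha}G_{r-1}\partial_{\mathbf f_\alpha}G_r$ in the $k=2r-1$ step before noting it vanishes, which the paper handles implicitly with the phrase ``having this in mind.''
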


\begin{proof} (i) If $r\geq 1$, then equation \eqref{eq:gradiens_k} for $k=2r$ yields
\[
0=\sum_{\alpha=1}^m(\partial_{\mathbf f_\alpha} G_r)^2,
\]
which means $\partial_{\mathbf f_\alpha} G_r=0$ for all $1\leq\alpha\leq m$. Having this in mind, equation \eqref{eq:gradiens_k} for $k=2r-1$ collapses to the equation
\[
0=\sum_{i=1}^n\left(\mathbf D_i G_r\right)^2,
\]
showing that 
\[
0=\mathbf D_i G_r=\partial_{\mathbf e_i}G_r-\tfrac12\partial_{[\mathbf e_i, v]} G_r=\partial_{\mathbf e_i}G_r
\]
for all $1\leq i\leq n$. This proves (i).

(ii)  Assume to the contrary that $r\geq 3$. Then equation \eqref{eq:gradiens_k} for $k=2r-2$ reduces to
\[
0=r(r-2)G_{r}^2+\sum_{\alpha=1}^m(\partial_{\mathbf f_\alpha} G_{r-1})^2,
\]
which cannot be fulfilled as the right-hand side is positive whenever $G_r\neq 0$. 
\end{proof}
\begin{theorem}
    Let $F\in \mathfrak F$ be a transnormal function for which the degree $r$ of $t$ in the polynomial $G=tF$  is at most one. Then one of the following two cases holds with some constants $c_1\neq 0$ and $c_2$:
    \begin{enumerate}[label=\emph{(\roman*)}]
        \item $G(v,z,t)=c_1+c_2t$, or
        \item $G(v,z,t)=c_1\|\Pi_{\mathfrak w} v-w_0\|^2+c_2t$, where $\Pi_{\mathfrak w}\colon \mathfrak s\to \mathfrak w$ is the orthogonal projection onto an arbitrary linear subspace $\mathfrak w\leq \mathfrak v$ and $w_0\in \mathfrak w$.
    \end{enumerate}    
All functions $F$ such that $G=tF$ has the form \emph{(i)} or \emph{(ii)} are isoparametric. 
\end{theorem}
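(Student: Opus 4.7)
The plan is to exploit the transnormality identity $\tfrac{1}{t}P_1 + P_2 = b_1 F + b_0$ from Lemma \ref{prop:form_of_b}, applied to $G = G_0(v,z) + c_2 t$. When $r = 1$, Proposition \ref{prop:G_r}(i) makes $G_1 \equiv c_2$ constant; when $r = 0$ we set $c_2 = 0$. Because $\partial_t G = c_2$ and $\mathbf D_i G = \mathbf D_i G_0$, $\partial_{\mathbf f_\alpha} G = \partial_{\mathbf f_\alpha} G_0$ are $t$-independent, the polynomial $P_1 + tP_2$ is at most linear in $t$, and matching the coefficients of $t^0$ and $t^1$ against $b_1 G_0 + (b_1 c_2 + b_0)t$ gives the two identities
\begin{equation*}
\sum_{i=1}^n(\mathbf D_i G_0)^2 = (b_1 + 2c_2)\,G_0, \qquad \sum_{\alpha=1}^m(\partial_{\mathbf f_\alpha}G_0)^2 = b_1 c_2 + c_2^2 + b_0,
\end{equation*}
which together encode the full content of the transnormality condition in this regime.

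The second identity equates a sum of polynomial squares on $\mathfrak v \oplus \mathfrak z$ to a nonnegative constant, so each summand, being a bounded polynomial, is itself constant. Hence $G_0$ is affine in $z$, say $G_0(v,z) = Q(v) + \langle \zeta, z\rangle$ for some $\zeta \in \mathfrak z$ and some polynomial $Q$ on $\mathfrak v$. Using \eqref{eq:ad_J}, $\partial_{[\mathbf e_i,v]}G_0 = \langle J_{\zeta}\mathbf e_i, v\rangle$, so $\mathbf D_i G_0 = \partial_{\mathbf e_i}Q(v) - \tfrac12\langle J_{\zeta}\mathbf e_i, v\rangle$ is $z$-independent, while the right-hand side of the first identity is $(b_1 + 2c_2)(Q(v) + \langle \zeta, z\rangle)$. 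Comparing the $z$-linear part forces $(b_1 + 2c_2)\,\zeta = 0$. If $b_1 + 2c_2 = 0$ but $\zeta \neq 0$, then the first identity becomes a sum of squares equal to zero, yielding $\partial_{\mathbf e_i}Q(v) = \tfrac12\langle J_{\zeta}\mathbf e_i, v\rangle$; differentiating once more in $\mathbf e_j$ and equating mixed partials gives $\langle J_{\zeta}\mathbf e_i, \mathbf e_j\rangle = \langle J_{\zeta}\mathbf e_j, \mathbf e_i\rangle$, which contradicts the skew-symmetry of $J_{\zeta}$ (and the identity $\|J_\zeta v\| = \|\zeta\|\|v\|$ then makes $\zeta = 0$). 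Thus $\zeta = 0$ in all cases.

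We are reduced to the Eikonal equation $\sum_i(\partial_{\mathbf e_i}Q)^2 = \lambda Q$ on $\mathfrak v$ with $\lambda = b_1 + 2c_2$. A degree count gives either $\lambda = 0$ (so $Q$ is constant, yielding case (i)) or $\deg Q = 2$. In the quadratic case, writing $Q(v) = \langle A v, v\rangle + \langle b', v\rangle + c$ with $A$ symmetric and matching terms yields $4A^2 = \lambda A$, $4Ab' = \lambda b'$, $\|b'\|^2 = \lambda c$. The first relation shows $A = (\lambda/4)\Pi_{\mathfrak w}$ for its $(\lambda/4)$-eigenspace $\mathfrak w$, $b' \in \mathfrak w$, and completing the square delivers $Q(v) = (\lambda/4)\|\Pi_{\mathfrak w}v - w_0\|^2$, i.e.\ case (ii) with $c_1 = \lambda/4$.

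For the converse, direct substitution into \eqref{eq:gradient_F} and \eqref{eq:Laplace} shows that both $\|\nabla F\|^2$ and $\Delta F$ are explicit polynomials in $F$: in case (ii) this uses $\mathbf D_i G_0 = 2c_1\langle\Pi_{\mathfrak w}v - w_0, \Pi_{\mathfrak w}\mathbf e_i\rangle$, $\Delta_{\mathfrak v}G_0 = 2c_1\dim\mathfrak w$, and the vanishing of $\partial_{\mathbf f_\alpha}G_0$, $\Delta_{\mathfrak z}G_0$, and $\partial_{[v,\mathbf e_i]}G_0$, while in case (i) both quantities collapse to functions of $t$ alone. I expect the main obstacle to be the clean extraction of the two polynomial identities from $\tfrac{1}{t}P_1 + P_2 = b_1 F + b_0$ and the $z$-part-vs-$v$-part separation that forces $\zeta = 0$; once $G_0$ has been reduced to a polynomial in $v$ alone, the remaining classification is essentially linear algebra on $\mathfrak v$.
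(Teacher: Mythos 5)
Your proposal is correct and follows essentially the same route as the paper: extract the two polynomial identities from the $t^{-1}$ and $t^0$ coefficients of the transnormality condition, use the sum-of-squares identity in $z$ to force $G_0$ to be affine in $z$, compare the $z$-linear part to reach $(b_1+2c_2)\zeta=0$, and then resolve the quadratic structure in $v$ by matching homogeneous parts. The only (cosmetic) differences are that you carry $c_2$ along and work with $\lambda=b_1+2c_2$ instead of first normalizing $c_2=0$ as the paper does, and that in the degenerate case $\lambda=0$ you conclude $\zeta=0$ by equating mixed partials of $Q$ rather than by the paper's equivalent observation that a self-adjoint operator cannot equal the skew-adjoint $J_{\zeta}$ unless both vanish.
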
 
\begin{proof} Write $G$ as $G(v,z,t)=G_0(v,z)+G_1(v,z)t\neq 0$.
 $G_1$ is obviously constant if $G_1=0$. Otherwise, $G_1$ is constant by Proposition \ref{prop:G_r}. Set $G_1\equiv c_2$. $F$ is transnormal if and only if the function $F-c_2=\frac{G_0}{t}$ is transnormal, so to prove the restrictions on $G_0$, we may assume $c_2=0$ and $F=\frac{G_0}{t}$.

Under the assumption $r=0$, equation \eqref{eq:gradiens_k} for $k\in\{-1,0\}$, characterizing the transnormality of $F$ reduces to the following two equations
\begin{align*}
b_1 G_0&=  \sum_{i=1}^n\left(\partial_{\mathbf e_i}G_0-\tfrac12\partial_{[\mathbf e_i, v]} G_0\right)^2,
\\
b_0&=\sum_{\alpha=1}^m(\partial_{\mathbf f_\alpha} G_0)^2. 
\end{align*}
The second equation implies that $b_0$ must be non-negative, and all partial derivatives $\partial_{\mathbf f_\alpha} G_0$ must be constant polynomials, hence $G(v,z)=\langle z,z_0\rangle+f(v)$, where $z_0\in \mathfrak z$ is a vector of  length $\|z_0\|=\sqrt{b_0}$, and $f$ is a polynomial function on $\mathfrak v$. If the degree of $f$ is $s\geq 3$, then the right-hand side of the first equation has degree $2(s-1)$ in $v$, which is strictly greater than the degree of the left-hand side, so $f$ must have degree at most two.  Then $f$ can be written in the form $f(v)=\langle v,Lv\rangle+\langle v,v_0\rangle+f_0$, where $L\colon \mathfrak v\to \mathfrak v$ is a self-adjoint operator, $v_0\in \mathfrak v$, $f_0\in \mathbb R$. Then the first equation gives
\[
b_1\big(\langle v,Lv\rangle+\langle v,v_0\rangle+\langle z,z_0\rangle+f_0\big)=
\left\|2Lv+v_0+\tfrac{1}{2}J_{z_0}v\right\|^2.
\]
The right-hand side does not depend on $z$, thus the $z$ dependent term on the left-hand side should also vanish, giving $b_1z_0=0$.

\noindent \textbf {Case 1: $b_1=0$.} In this case, we have
\[
-\tfrac{1}{2}J_{z_0}v=2Lv+v_0.
\] 
Evaluating this equation at $v=0$, we obtain $v_0=0$ and consequently  $J_{z_0}=-4 L$. As $J_{z_0}$ is skew-adjoint, $L$ is self-adjoint, this equation implies $J_{z_0}=L=0$, that is, $z_0=0$. We conclude that the only transnormal functions $F$ belonging to Case 1 are constant multiples of the function $\frac{1}{t}$. Then the polynomials $G=tF$ belong to class (i).

\noindent \textbf {Case 2: $b_1\neq 0$, $z_0=0$.}
In this case, we have to solve the equation 
\[
\langle v, L v\rangle +\langle v,v_0\rangle+f_0=
\frac{1}{b_1}\|2Lv+v_0\|^2=\left\langle v,\frac{4L^2}{b_1}v\right\rangle+\left\langle v,\frac{4L}{b_1}v_0\right\rangle+\frac{\|v_0\|^2}{b_1}.
\]
Equating the constant, linear and quadratic term of the two ends of this equation, we get
\[
f_0=\frac{\|v_0\|^2}{b_1},\qquad v_0= \frac{4L}{b_1}v_0,\qquad \frac{4L}{b_1}=\left(\frac{4L}{b_1}\right)^2.
\]
The third equation shows that the symmetric matrix $\frac{4}{b_1}L$ is an orthogonal projection $\Pi_{\mathfrak w}$ onto a linear subspace $\mathfrak w$ of $\mathfrak v$. Then the second equation  gives $\Pi_{\mathfrak w} v_0=v_0$, therefore $v_0\in \mathfrak w$. Thus, $G_0$ has the form
\[
G_0(v,z,t)=\frac{b_1}{4}\left\|\Pi_{\mathfrak w} v+\frac{2v_0}{b_1}\right\|^2,
\]
as claimed. 

The last part of the theorem claims that all functions of the form $F(v,z,t)=c_1/t+c_2$ or
\begin{equation}\label{eq:case(ii)}
    F(v,z,t)=\frac{1}{t}( c_1 \|\Pi_{\mathfrak w} v-w_0\|^2+c_2 t)=\frac{c_1}{t}\|\Pi_{\mathfrak w} v-w_0\|^2+c_2,
\end{equation}
where $c_1, c_2$ are some constants, are isoparametric. The first case follows at once from equations 
\[ 
\|\nabla F\|^2=\left(\frac{c_1}{t}\right)^2
\quad\text{ and }\quad\Delta\left(\frac{c_1}{t}+c_2\right)=\left(m+\frac{n}{2}+1\right)\frac{c_1}{t}
 ,
\]
obtained by substitution into equations \eqref{eq:gradient_F} and \eqref{eq:Laplace_F}.

To check the transnormality condition for the function \eqref{eq:case(ii)}, we use equation $\eqref{eq:gradient_F}$ to obtain
\begin{equation}
  \|\nabla F\|^2=\frac{4c_1}{t}\|\Pi_{\mathfrak w} v-w_0\|^2+\left(c_2-F\right)^2=(F-c_2)^2+4(F-c_2).
\end{equation} 

The Laplace condition is equivalent to  equation \eqref{eq:Laplace_G}. Since $G$ does not depend on $z$, and $G$ is linear in $t$, the left-hand side of the equation can be simplified to 
\begin{equation}\label{eq:Laplace_G'}
\Delta_{\mathfrak v}( c_1 \|\Pi_{\mathfrak w} v-w_0\|^2+c_2 t)-\left(m+\frac{n}{2}+1\right)c_2=2c_1 \dim \mathfrak w-\left(m+\frac{n}{2}+1\right)c_2,
\end{equation}
so equation \eqref{eq:Laplace_G} holds if $a_0$ is chosen to be the constant on the right-hand side of \eqref{eq:Laplace_G'}.
\end{proof}

\begin{proposition}
    Assume that $F=G/t$ is a transnormal function, where $G$ is a polynomial, in which the degree of the variable $t$ is equal to $2$. Then $G$ must be a non-zero constant multiple of a polynomial of the form
    \[
    t^2+\tfrac{1}{2}\left(\|\Pi_{\mathfrak w}(v-v_0)\|^2-b_1\right)t+ \langle z, \Pi_{\mathfrak y} z\rangle+\langle c(v), z\rangle+d(v),
    \]
    where $\Pi_{\mathfrak w}\colon \mathfrak v\to \mathfrak w$ is an orthogonal projection onto a linear subspace $\mathfrak w\leq \mathfrak v$, $v_0$ is a fixed element of $\mathfrak w$, $\Pi_{\mathfrak y}\colon \mathfrak z\to \mathfrak y$ is an orthogonal projection onto a linear subspace $\mathfrak y\leq \mathfrak z$, $c(v)$ and $d(v)$ are some polynomial functions on $\mathfrak v$.
\end{proposition}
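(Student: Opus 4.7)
The plan is to expand the transnormality relation \eqref{eq:gradiens_k} coefficient by coefficient in $t$, starting from the top degree. Write $G=G_0(v,z)+G_1(v,z)\,t+G_2(v,z)\,t^2$. By Proposition \ref{prop:G_r}(i), $G_2$ is a constant, and since a non-zero scalar factor in $G$ only rescales $F$, we may normalise $G_2\equiv 1$; this produces the overall non-zero constant multiple in the statement. Inserting $G_2\equiv 1$ and $G_3=0$ into \eqref{eq:gradiens_k} at $k=2$ collapses it to $\sum_{\alpha}(\partial_{\mathbf{f}_\alpha}G_1)^2=0$, so $G_1=G_1(v)$ depends only on $v$. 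Substituting this into \eqref{eq:gradiens_k} at $k=1$, using $\partial_{[\mathbf{e}_i,v]}G_1=0$, yields the single polynomial identity $\|\nabla_{\!v}G_1\|^2=2G_1+b_1$. Comparing degrees in $v$ forces $\deg_v G_1\le 2$, and writing $G_1(v)=\langle v,Av\rangle+\langle v,w\rangle+e$ with $A$ self-adjoint, the quadratic, linear and constant comparisons produce $4A^2=2A$, $2Aw=w$, $\|w\|^2=2e+b_1$. The first forces $2A=\Pi_{\mathfrak{w}}$ to be an orthogonal projection, the second puts $w\in\mathfrak{w}$, and after completing the square one obtains $G_1(v)=\tfrac{1}{2}\bigl(\|\Pi_{\mathfrak{w}}(v-v_0)\|^2-b_1\bigr)$ for some $v_0\in\mathfrak{w}$, which is the $t$-coefficient announced in the statement.

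The delicate step is $k=0$. After substituting the known $G_1$ and $G_2$, the transnormality identity becomes
\[
\sum_{\alpha=1}^{m}(\partial_{\mathbf{f}_\alpha}G_0)^2+2\sum_{i=1}^{n}(\partial_{\mathbf{e}_i}G_1)\,\mathbf{D}_iG_0-4G_0=b_1G_1+b_0+G_1^2,
\]
whose right-hand side has $z$-degree $0$. Decompose $G_0=\sum_j H_j(v,z)$ into $z$-homogeneous parts and set $N=\max\{j:H_j\not\equiv 0\}$. For $N\ge 3$ the $z$-degree $2N-2$ component of the left-hand side is exactly $\sum_{\alpha}(\partial_{\mathbf{f}_\alpha}H_N)^2$, since every other term has $z$-degree at most $N<2N-2$; this must vanish, and as a sum of squares of real polynomials it does so only termwise, forcing $H_N$ to be $z$-independent and hence zero, a contradiction. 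Thus $\deg_z G_0\le 2$, and writing $H_2(v,z)=\langle z,B(v)z\rangle$ with $B(v)$ self-adjoint, the $z$-degree $2$ part of the identity becomes, after substituting $\partial_{\mathbf{f}_\alpha}H_2=2(B(v)z)_\alpha$ and $\partial_{\mathbf{e}_i}H_2=\langle z,(\partial_{\mathbf{e}_i}B)z\rangle$, the operator equation
\[
B(v)^2-B(v)+\tfrac{1}{2}\sum_{i=1}^{n}(\partial_{\mathbf{e}_i}G_1)(v)\,\partial_{\mathbf{e}_i}B(v)=0.
\]

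The main obstacle is to extract from this identity that $B$ is \emph{constant} in $v$. Let $d=\deg_v B$ and suppose $d\ge 1$. Because $\partial_{\mathbf{e}_i}G_1$ is linear in $v$ and $\partial_{\mathbf{e}_i}B$ has $v$-degree at most $d-1$, the correction term has $v$-degree at most $d$, whereas the top $v$-degree part $B_d(v)^2$ of $B^2$ has degree $2d>d$. Hence $B_d(v)^2=0$ for every $v\in\mathfrak{v}$, and since $B_d(v)$ is self-adjoint this yields $B_d\equiv 0$, contradicting $\deg_v B=d$. Therefore $B$ is constant and $B^2=B$, so $B=\Pi_{\mathfrak{y}}$ is an orthogonal projection onto some subspace $\mathfrak{y}\le\mathfrak{z}$. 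Writing the remaining $z$-homogeneous pieces as $H_1(v,z)=\langle c(v),z\rangle$ and $H_0(v)=d(v)$ for polynomial functions $c\colon\mathfrak{v}\to\mathfrak{z}$ and $d\colon\mathfrak{v}\to\mathbb{R}$, which the statement imposes no further restriction on, and reassembling $G=G_2 t^2+G_1 t+G_0$, we recover $G$ in the claimed normal form, up to the non-zero constant multiple absorbed in the first step.
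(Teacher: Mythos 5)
Your proof is correct and follows the same overall road map as the paper's: normalise $G_2\equiv 1$, read off from the $k=2$ and $k=1$ coefficients of \eqref{eq:gradiens_k} that $G_1$ is $z$-independent and satisfies $\|\nabla_vG_1\|^2=2G_1+b_1$, obtain the stated form of $G_1$, show $\deg_zG_0\le 2$ by extracting the degree $2N-2$ $z$-homogeneous part of the $k=0$ relation, and finally analyse the degree-$2$-in-$z$ part, which both you and the paper reduce to the same operator identity $B^2-B+\tfrac12\partial_{\Pi_{\mathfrak w}(v-v_0)}B=0$. The genuine difference is in how constancy of $B$ is established (the content of Lemma \ref{lem:B(v)}). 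The paper restricts the identity to the ray $s\mapsto v_0+w'+sw$, solves the resulting ODE $sB_v'(s)=2B_v(s)-2B_v^2(s)$ explicitly, analyses the eigenvalues of the solution to show $B(v)$ is a projection at every $v$, and only then uses boundedness of the Hilbert--Schmidt norm to force the polynomial entries of $B$ to be constant. You instead compare $v$-degrees directly: if $\deg_vB=d\ge 1$, the terms $-B$ and $\tfrac12\partial_{\Pi_{\mathfrak w}(v-v_0)}B$ have $v$-degree at most $d$, so the degree-$2d$ homogeneous part of the identity is $B_d^2=0$, and self-adjointness of $B_d(v)$ then kills $B_d$, a contradiction; once $B$ is constant, $B^2=B$ is immediate. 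Your argument is shorter and more elementary, needing no ODE theory, spectral decomposition, or boundedness-of-polynomials step, and it reverses the logical order (constancy first, projection second) relative to the paper. Both routes are sound.
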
 
We note that not all functions $G$ of the given form will provide a transnormal function $F=G/t$.  

The proposition will follow from the rest of this section.

We may assume without loss of generality that the value of the constant polynomial $G_2\neq 0$ is $1$. The functions $G_0$ and $G_1$ satisfy equation \eqref{eq:gradiens_k} for $-1\leq k\leq 2$
\begin{align} \label{eq:k=-1}
b_1 G_0&=-2G_0G_1+  \sum_{i=1}^n\left(\partial_{\mathbf e_i}G_0-\tfrac12\partial_{[\mathbf e_i, v]} G_0\right)^2 
\\ \label{eq:k=0}
b_1G_1+b_0&=-G_1^2-4G_0+  \sum_{i=1}^n2\left(\partial_{\mathbf e_i}G_0-\tfrac12\partial_{[\mathbf e_i, v]} G_0\right)\left(\partial_{\mathbf e_i}G_1-\tfrac12\partial_{[\mathbf e_i, v]} G_1\right)+\sum_{\alpha=1}^m(\partial_{\mathbf f_\alpha} G_0)^2 
\\ \label{eq:k=1}
b_1&=-2G_{1}+ \sum_{i=1}^n\left(\partial_{\mathbf e_i}G_1-\tfrac12\partial_{[\mathbf e_i, v]} G_1\right)^2+
\sum_{\alpha=1}^m2\partial_{\mathbf f_\alpha} G_0\partial_{\mathbf f_\alpha} G_{1} 
\\ \label{eq:k=2}
0&= \sum_{\alpha=1}^m(\partial_{\mathbf f_\alpha} G_1)^2
\end{align}
Equation \eqref{eq:k=2} implies that $G_1(v,z)=G_1(v)$ does not depend on $z$, therefore equation $\eqref{eq:k=1}$ reduces to 
\begin{equation}\label{eq:G_1_t^1}
    \sum_{i=1}^n\left(\partial_{\mathbf e_i}G_1\right)^2=2G_1+b_1.
\end{equation}
If $G_1$ is constant, then \eqref{eq:G_1_t^1} gives that $G_1=-\frac{b_1}{2}$. If $G_1$ is not constant and its degree is $q>0$, then $\sum_{i=1}^n\left(\partial_{\mathbf e_i} G_1\right)^2$ has degree $2(q-1)$, and equation \eqref{eq:G_1_t^1} can be fulfilled only if $2(q-1)=q$, i.e., $q=2$. Thus, $G_1$ can be written in the form 
\[
G_1(v)=\tfrac{1}{2}\langle v,Av\rangle-\langle v_0,v\rangle+f_1,
\]
where $A\colon \mathfrak v\to\mathfrak v$ is a self-adjoint operator, $v_0\in \mathfrak v$, $f_1\in \mathbb R$. Substituting this form into equation \eqref{eq:G_1_t^1}, then we obtain
\[
\|Av-v_0\|^2=\langle v,Av\rangle-2\langle v_0,v\rangle+2f_1+b_1.
\]
Comparing the homogeneous parts of the two sides, we obtain the following system of equations
\begin{align*}
\|v_0\|^2=2f_1+b_1,\quad\qquad Av_0=v_0,\quad\qquad  A^2=A.
\end{align*}
Thus, $A$ is an orthogonal projection $\Pi_{\mathfrak w}$ of $\mathfrak v$ onto a linear subspace $\mathfrak w\leq \mathfrak v$, $v_0\in\mathfrak w$, and the general solution of equation \eqref{eq:G_1_t^1} has the form 
\[G_1(v)=\tfrac{1}{2}\left(\|\Pi_{\mathfrak w}(v-v_0)\|^2-b_1\right).\]
This family of general solutions of equation \eqref{eq:G_1_t^1} contains also the constant function $G_1=-\frac{b_1}2$ for $\mathfrak w=\{0\}$.

Now we turn our attention to the polynomial $G_0$, the choice of which is restricted by equations \eqref{eq:k=-1} and \eqref{eq:k=0}. Choose the basis $\mathbf e_1,...,\mathbf e_n$ so that $\mathfrak w$ is spanned by $\mathbf e_1,...,\mathbf e_{n'}$. Then equations \eqref{eq:k=-1} and \eqref{eq:k=0} take the form
\begin{gather}\label{eq:c0}
\|\Pi_{\mathfrak w}(v-v_0)\|^2 G_0=  \sum_{i=1}^n\left(\partial_{\mathbf e_i}G_0-\tfrac12\partial_{[\mathbf e_i, v]} G_0\right)^2, 
\\
\label{eq:c1}
  b_0-\tfrac{1}{4}b_1^2=-\tfrac{1}{4}\|\Pi_{\mathfrak w}(v-v_0)\|^4-4G_0+  \sum_{i=1}^{n'}2\left(\partial_{\mathbf e_i}G_0-\tfrac12\partial_{[\mathbf e_i, v]} G_0\right)\left\langle v-v_{0},\mathbf e_i\right\rangle+\sum_{\alpha=1}^m(\partial_{\mathbf f_\alpha} G_0)^2. 
\end{gather}
Assume that the highest degree non-zero $z$-homogeneous component $G_{0,q}$ of $G_0$ has degree $q\geq 3$. Then the degree $2(q-1)$ $z$-homogeneous component of the right-hand side of equation \eqref{eq:c1} is 
\[
\sum_{\alpha}(\partial_{\mathbf f_\alpha} G_{0,q})^2\neq 0,
\]
which contradicts the condition that the right-hand side must be equal to the constant polynomial $b_0-\frac{1}{4}b_1^2$. Thus, $G_0$ has degree at most $2$ in $z$. Let us write $G_0$ in the form
\[
G_0(v,z)=\langle z, B(v)z\rangle+\langle c(v), z\rangle+d(v),
\]
where $B(v)\colon\mathfrak z\to\mathfrak z$ is a self-adjoint linear map, $c(v)\in \mathfrak z$, and $d(v)$ as well as the entries of the matrix of $B(v)$ and the coordinates of $c(v)$ with respect to any basis of $\mathfrak z$ are polynomials of $v$.
\begin{lemma}\label{lem:B(v)}
$B(v)$ does not depend on $v$, and it is equal to an orthogonal projection $\Pi_{\mathfrak y}$ of $\mathfrak z$ onto a linear subspace $\mathfrak y\leq \mathfrak z$.     
\end{lemma}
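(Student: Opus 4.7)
The plan is to compare the $z$-homogeneous parts of the two transnormality equations \eqref{eq:c0} and \eqref{eq:c1} after substituting the general form $G_0(v,z)=\langle z,B(v)z\rangle+\langle c(v),z\rangle+d(v)$. Observe that both left-hand sides have $z$-degree at most $2$, so every $z$-homogeneous part of higher degree on the right must vanish identically in $(v,z)$. The key quantities I would expand are
\[
A_i:=\partial_{\mathbf e_i}G_0-\tfrac12\partial_{[\mathbf e_i,v]}G_0 \quad\text{and}\quad \partial_{\mathbf f_\alpha}G_0,
\]
splitting each into its $z$-homogeneous components. Since $[\mathbf e_i,v]\in\mathfrak z$, the derivative $\partial_{[\mathbf e_i,v]}$ lowers the $z$-degree by one, so the $z$-degree $2$ part of $A_i$ is simply $\langle z,(\partial_{\mathbf e_i}B)(v)z\rangle$.

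First I would extract the $z$-degree $4$ part of equation \eqref{eq:c0}. It reads
\[
0=\sum_{i=1}^n \bigl(\langle z,(\partial_{\mathbf e_i}B)(v)z\rangle\bigr)^2,
\]
and, being a sum of real polynomial squares, each summand vanishes. As $(\partial_{\mathbf e_i}B)(v)$ is self-adjoint and its quadratic form is identically zero, we conclude $(\partial_{\mathbf e_i}B)(v)=0$ for every $i$ and every $v$, hence $B(v)\equiv B$ is constant on $\mathfrak v$.

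With $B$ constant, the $z$-degree $2$ part of $A_i$ vanishes, so $A_i$ has $z$-degree $\leq 1$. Next I would look at equation \eqref{eq:c1} in $z$-degree $2$. On the right-hand side, the contribution $2A_i\langle v-v_0,\mathbf e_i\rangle$ has $z$-degree at most $1$ and so contributes nothing; the term $-4G_0$ contributes $-4\langle z,Bz\rangle$; and a direct computation of $\sum_\alpha(\partial_{\mathbf f_\alpha}G_0)^2=4\langle z,B^2z\rangle+4\langle Bc(v),z\rangle+\|c(v)\|^2$ contributes $4\langle z,B^2z\rangle$ to the degree $2$ part. Equating to $0$ (the $z$-degree $2$ part of the constant LHS), we obtain
\[
\langle z,B^2z\rangle=\langle z,Bz\rangle
\]
for all $z\in\mathfrak z$, which together with the self-adjointness of $B$ forces $B^2=B$. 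A self-adjoint idempotent on a Euclidean space is the orthogonal projection onto its image, so $B=\Pi_{\mathfrak y}$ with $\mathfrak y:=\im B\leq\mathfrak z$, as claimed.

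The main obstacle I anticipate is purely bookkeeping: keeping clean track of which $z$-degree pieces of the right-hand sides of \eqref{eq:c0} and \eqref{eq:c1} actually contribute at each degree, since the operator $A_i$ mixes two different derivatives. The algebraic content is light once the degrees are separated: the vanishing of a sum of squared quadratic forms gives constancy of $B$, and one further degree count gives idempotency.
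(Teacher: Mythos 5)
Your proof is correct, and it takes a genuinely different and in fact more elementary route than the paper's. The paper works only with equation \eqref{eq:c1}: extracting its degree-2 $z$-homogeneous part yields the first-order PDE $\partial_{\Pi_{\mathfrak w}(v-v_0)}B(v)=2B(v)-2B^2(v)$, which the authors then solve along the rays $s\mapsto v_0+w'+sw$, obtaining the explicit family $B_v(s)=s^2B(v)(I+(s^2-1)B(v))^{-1}$; polynomiality of the matrix entries then forces all eigenvalues of $B(v)$ to be $0$ or $1$ (giving that each $B(v)$ is a projection), and finally a boundedness argument via the Hilbert--Schmidt norm shows $B$ is constant. You instead obtain the constancy of $B$ immediately from the degree-4 $z$-homogeneous part of equation \eqref{eq:c0}: since the left-hand side has $z$-degree at most $2$ while the right-hand side is a sum of squares, the degree-4 part reads $0=\sum_i(\langle z,(\partial_{\mathbf e_i}B)(v)z\rangle)^2$, and the sum-of-squares structure over $\mathbb R$ forces each $\partial_{\mathbf e_i}B\equiv 0$. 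With $B$ constant, the degree-2 part of \eqref{eq:c1} collapses to $\langle z,B^2z\rangle=\langle z,Bz\rangle$, giving idempotency and hence the orthogonal projection claim. Your approach trades the ODE-solving plus spectral-polynomiality-plus-boundedness chain for a short degree count in each equation; it buys brevity and avoids the case analysis on eigenvalues, at the cost of needing both \eqref{eq:c0} and \eqref{eq:c1} rather than just \eqref{eq:c1}. Both are valid; yours is the cleaner path to this particular statement.
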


\begin{proof}
The degree $2$ $z$-homogeneous component of equation \eqref{eq:c1} is
\[
 0=\langle z,(-4B(v)+2\partial_{\Pi_{\mathfrak w}(v-v_0)}B(v)) z\rangle+
 4\|B(v)z\|^2.
\]
This means that $B$ must satisfy the equation
\[
\partial_{\Pi_{\mathfrak w}(v-v_0)}B(v)=2B(v)-2B^2(v).
\]

Fix an arbitrary vector $v\in \mathfrak v$, and decompose $v-v_0$ as $v-v_0=w+w'$, where $w\in \mathfrak w$, $w'\in \mathfrak w^{\perp}$. Then the map $B_v\colon \mathbb R\to\End(\mathfrak z)$, 
\[
B_v(s)=B(v_0+w'+sw)
\]
satisfies the differential equation
\[
sB_v'(s)=\partial_{sw}B(v_0+w'+sw)= \partial_{\Pi_{\mathfrak w}((v_0+w'+sw)-v_0)}B(v_0+w'+sw)=2B_v(s)-2B_v^2(s).
\]
This differential equation has a unique solution defined on an open neighborhood of $1$, satisfying the initial condition $B_v(1)=B(v)$. It is easy to verify by substitution that the function 
\[
B_v(s)=s^2B(v)(I+(s^2-1)B(v))^{-1}
\]
solves the equation with exactly this initial condition, so this equation must hold in an open neighborhood of 1.

As $B(v)$ is self-adjoint, there is an orthonormal basis $\tilde {\mathbf f}_1,\dots,\tilde {\mathbf f}_m$ of $\mathfrak z$ consisting of eigenvectors of $B(v)$. Assume that $B(v)\tilde {\mathbf f}_{\alpha}=\lambda_{\alpha}\tilde {\mathbf f}_{\alpha}$ for $1\leq \alpha\leq m$. Then we have
\[
B_v(s)(\tilde {\mathbf f}_{\alpha})=\frac{s^2\lambda_{\alpha}}{1+(s^2-1)\lambda_{\alpha}}\tilde {\mathbf f}_{\alpha}\qquad \text{ for }1\leq \alpha\leq m.
\]
If $\lambda_{\alpha}$ is $0$ or $1$, then the $i$th eigenvalue of $B_v(s)$ is the constant $0$ or $1$ function, in any other cases, the $i$th eigenvalue of $B_v(s)$ is not a polynomial of $s$ as its meromorphic extension to $\mathbb C$ has poles at $s=\pm\sqrt{1-1/\lambda_{\alpha}}$.  By our assumption, the entries of the matrix $B_v(s)$ with respect to any basis of $\mathfrak z$ are polynomials of $s$, therefore the eigenvalues $\lambda_{\alpha}$ of $B(v)$ are all equal to $0$ or $1$, which means that $B(v)$ is the orthogonal  projection $\Pi_{\mathfrak y(v)}$ of  $\mathfrak z$ onto the eigenspace $\mathfrak y(v)$ of $B(v)$ corresponding to the eigenvalue $1$.

Then the squared Hilbert--Schmidt norm of $B(v)$ satisfies the inequality 
\[
\|B(v)\|_{\mathrm{HS}}^2=\tr  (B^2(v))=\sum_{\alpha=1}^m\lambda_\alpha^2\leq m,
\]
showing that the matrix elements of $B(v)$ with respect to an arbitrary basis are bounded functions of $v$. On the other hand, the matrix elements are polynomials of $v$, consequently, all matrix elements of $B(v)$ are constant, therefore $B(v)$ and $\mathfrak y(v)$ do not depend on $v$.
\end{proof}

\section{Incorporation of the Laplace condition}
In the previous section, we dealt with transnormal functions of the family $\mathfrak F$, and found strong restrictions on their possible algebraic forms. To complete the classification of isoparametric functions in $\mathfrak F$, we investigate what further restrictions are obtained on the form of $F\in\mathfrak F$ if, in addition to the transnormality condition, we impose also the Laplace condition. So from now on $F\in \mathfrak F$ is assumed to be an isoparametric function, and we use all the notations introduced in the previous section for the description of the algebraic form of $F$. 
\begin{lemma}
  The projections $\Pi_{\mathfrak w}$ and $\Pi_{\mathfrak y}$ are equal to the identical transformations $\Pi_{\mathfrak w}=\iid_\mathfrak v$ and $\Pi_{\mathfrak y}=\iid_\mathfrak z$, respectively, i.e., $\mathfrak w=\mathfrak v$ and $\mathfrak y=\mathfrak z$.
\end{lemma}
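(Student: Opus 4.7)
The plan is to exploit the single relation coming from the Laplace recursion \eqref{eq:Laplacek} at $k=1$, which forces a dimension count with a unique maximizer.

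First I would recall that under the standing assumption $r=2$ and the normalization $G_2\equiv 1$, the previous proposition gives
\[
G_1(v)=\tfrac{1}{2}\bigl(\|\Pi_{\mathfrak w}(v-v_0)\|^2-b_1\bigr),\qquad
G_0(v,z)=\langle z,\Pi_{\mathfrak y}z\rangle+\langle c(v),z\rangle+d(v).
\]
The Laplace condition supplies the recursion \eqref{eq:Laplacek}; applying it with $k=1$ and $G_2=1$ yields the scalar identity
\[
2\left(m+\tfrac{n}{2}\right)=\mathbf D G_1+\Delta_{\mathfrak z}G_0.
\]

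Next I would evaluate the two terms on the right. Because $G_1$ depends only on $v$, both the term $\tfrac{\|v\|^2}{4}\Delta_{\mathfrak z}G_1$ and the mixed term $\sum_i\partial_{\mathbf e_i}\partial_{[v,\mathbf e_i]}G_1$ in the operator $\mathbf D$ vanish, so $\mathbf D G_1=\Delta_{\mathfrak v}G_1$. Choosing an orthonormal basis of $\mathfrak v$ adapted to the decomposition $\mathfrak v=\mathfrak w\obot\mathfrak w^{\perp}$, a direct computation gives $\Delta_{\mathfrak v}\|\Pi_{\mathfrak w}(v-v_0)\|^2=2\dim\mathfrak w$, hence $\mathbf D G_1=\dim\mathfrak w$. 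Similarly, the only $z$-quadratic contribution to $G_0$ is $\langle z,\Pi_{\mathfrak y}z\rangle$, and $\Delta_{\mathfrak z}\langle z,\Pi_{\mathfrak y}z\rangle=2\tr\Pi_{\mathfrak y}=2\dim\mathfrak y$, while the remaining terms of $G_0$ are $z$-affine and are annihilated by $\Delta_{\mathfrak z}$. Thus the displayed identity reduces to
\[
2m+n=\dim\mathfrak w+2\dim\mathfrak y.
\]

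Finally, since $\dim\mathfrak w\le n$ and $\dim\mathfrak y\le m$, the right-hand side is at most $n+2m$, with equality forcing $\dim\mathfrak w=n$ and $\dim\mathfrak y=m$. Therefore $\mathfrak w=\mathfrak v$ and $\mathfrak y=\mathfrak z$, i.e., $\Pi_{\mathfrak w}=\iid_{\mathfrak v}$ and $\Pi_{\mathfrak y}=\iid_{\mathfrak z}$. There is no real obstacle here — the whole argument reduces to a dimension count — the main work has already been done in extracting the form of $G_0$ and $G_1$ from the transnormality condition in the previous section.
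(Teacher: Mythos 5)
Your proof is correct and follows essentially the same route as the paper: apply the Laplace recursion \eqref{eq:Laplacek} at $k=1$ with $G_2=1$, compute $\mathbf D G_1=\tr\Pi_{\mathfrak w}$ and $\Delta_{\mathfrak z}G_0=2\tr\Pi_{\mathfrak y}$, and conclude by the dimension count $2m+n=\tr\Pi_{\mathfrak w}+2\tr\Pi_{\mathfrak y}\le n+2m$ that both projections must be the identity. Your write-up just spells out the intermediate computations that the paper states without detail.
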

\begin{proof}
Equation \eqref{eq:Laplacek} for $k=1$ yields
\[
1=\frac{1}{2m+n}\left(\tr \Pi_{\mathfrak w}+2\tr \Pi_{\mathfrak y}\right).
\]
Since $\tr \Pi_{\mathfrak w}=\dim \mathfrak w\leq n$ and $\tr \Pi_{\mathfrak y}=\dim \mathfrak y\leq m$, equality can occur only if $\tr \Pi_{\mathfrak w}=n$ and $\tr \Pi_{\mathfrak y}= m$, i.e., $\Pi_{\mathfrak w}=\iid_{\mathfrak v}$ and $\Pi_{\mathfrak y}=\iid_{\mathfrak z}$.
\end{proof}

\begin{corollary}
If $G$ has degree $2$ in the variable $t$, and $G_2=1$, then $G$ has the form
\[
G(v,z,t)=t^2+\tfrac{1}{2}\left(\|v-v_0\|^2-b_1\right)t+\| z\|^2+\langle c(v), z\rangle+d(v).
\]
\end{corollary}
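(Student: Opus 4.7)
The plan is to observe that the Corollary is a direct assembly of the two preceding results, and simply perform the substitution. Since the Corollary is specifically about the $r=2$ case (the only case not handled by the theorem in Section~\ref{sec:transnormality}), I would invoke the Proposition immediately above, which gives, under transnormality alone and the normalization $G_2=1$, the form
\[
G(v,z,t)=t^{2}+\tfrac{1}{2}\bigl(\|\Pi_{\mathfrak w}(v-v_0)\|^{2}-b_1\bigr)t+\langle z,\Pi_{\mathfrak y}z\rangle+\langle c(v),z\rangle+d(v),
\]
for some orthogonal projections $\Pi_{\mathfrak w}\colon\mathfrak v\to\mathfrak w$ and $\Pi_{\mathfrak y}\colon\mathfrak z\to\mathfrak y$ and polynomial data $v_0,c(v),d(v)$.

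The next step is to upgrade these projections using the Laplace condition. The Lemma just above, whose proof evaluates the recursion \eqref{eq:Laplacek} at $k=1$ (using $G_2=1$, $\mathbf D G_1=\Delta_{\mathfrak v}G_1=\tr\Pi_{\mathfrak w}$, and $\Delta_{\mathfrak z}G_0=2\tr\Pi_{\mathfrak y}$), precisely concludes that $\Pi_{\mathfrak w}=\iid_{\mathfrak v}$ and $\Pi_{\mathfrak y}=\iid_{\mathfrak z}$. These conclusions apply verbatim here because the hypotheses of the Corollary (isoparametric $F\in\mathfrak F$ with $r=2$ and $G_2=1$) are exactly the hypotheses under which the Lemma was proved.

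Substituting $\Pi_{\mathfrak w}=\iid_{\mathfrak v}$ into $\|\Pi_{\mathfrak w}(v-v_0)\|^{2}$ produces $\|v-v_0\|^{2}$, and substituting $\Pi_{\mathfrak y}=\iid_{\mathfrak z}$ into $\langle z,\Pi_{\mathfrak y}z\rangle$ produces $\|z\|^{2}$, yielding the claimed expression. There is no real obstacle: all the work was done in the Proposition and Lemma, and the Corollary is a bookkeeping step recording the simplification.
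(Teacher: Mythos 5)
Your proof is correct and matches the paper's reasoning exactly: the Corollary is indeed just the Proposition at the end of Section~\ref{sec:transnormality} specialized by the Lemma $\Pi_{\mathfrak w}=\iid_{\mathfrak v}$, $\Pi_{\mathfrak y}=\iid_{\mathfrak z}$, which the paper obtains from equation~\eqref{eq:Laplacek} at $k=1$ precisely as you describe. The paper treats the Corollary as immediate and gives no separate argument, so your reconstruction of the implicit substitution step is the intended one.
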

The next lemma focuses on the vector $c(v)$.
\begin{lemma} We have $c(v)=[v,v_0]+z_0$, where $z_0=c(v_0)\in \mathfrak z$.
\end{lemma}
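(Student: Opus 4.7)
The plan is to read off the desired identity for $c$ from the $k=0$ transnormality relation \eqref{eq:c1} (now with $\Pi_{\mathfrak w}=\iid_{\mathfrak v}$, so $n'=n$, and $\Pi_{\mathfrak y}=\iid_{\mathfrak z}$). Since the left-hand side of \eqref{eq:c1} is a constant while the right-hand side is a polynomial in $(v,z)$, every $z$-homogeneous piece of positive degree on the right must vanish. Substituting $G_0=\|z\|^2+\langle c(v),z\rangle+d(v)$, I expect the $z$-quadratic contributions from $-4G_0$ and from $\sum_\alpha(\partial_{\mathbf f_\alpha}G_0)^2=4\|z\|^2+4\langle c(v),z\rangle+\|c(v)\|^2$ to cancel automatically, and the substantive consequence then to come from the vanishing of the $z$-linear part.

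Next I would isolate the $z$-linear contributions. The pieces $-4\langle c(v),z\rangle$ from $-4G_0$ and $+4\langle c(v),z\rangle$ from $\sum_\alpha(\partial_{\mathbf f_\alpha}G_0)^2$ cancel, so the only remaining source is the sum $\sum_i 2\bigl(\partial_{\mathbf e_i}G_0-\tfrac12\partial_{[\mathbf e_i,v]}G_0\bigr)\langle v-v_0,\mathbf e_i\rangle$. A direct computation, using that $\partial_{[\mathbf e_i,v]}$ is the directional derivative in the $\mathfrak z$-direction $[\mathbf e_i,v]$, gives
\[
\partial_{\mathbf e_i}G_0-\tfrac12\partial_{[\mathbf e_i,v]}G_0=\langle z,\,\partial_{\mathbf e_i}c(v)-[\mathbf e_i,v]\rangle+(\text{terms free of }z),
\]
whose $z$-linear part, after multiplication by $\langle v-v_0,\mathbf e_i\rangle$ and summation over $i$, is $2\langle z,\,\partial_{v-v_0}c(v)-[v-v_0,v]\rangle$. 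Requiring this to vanish for every $z\in\mathfrak z$ produces the pointwise identity
\[
\partial_{v-v_0}\,c(v)=[v-v_0,\,v].
\]

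Finally I would integrate this identity along rays emanating from $v_0$. Setting $h(v):=c(v)-[v,v_0]$ and using bilinearity of the bracket to get $\partial_{v-v_0}[v,v_0]=[v-v_0,v_0]$, the identity collapses to $\partial_{v-v_0}h(v)=[v-v_0,\,v-v_0]=0$. Restricting to the ray $v=v_0+su$ with $u\in\mathfrak v$ fixed, the polynomial $\tilde h(s):=h(v_0+su)$ then satisfies $s\tilde h'(s)=0$; being polynomial, $\tilde h$ is constant, and varying $u$ makes $h$ constant, with value $h(v_0)=c(v_0)=:z_0$. This yields the claim. I expect the only real subtlety to be the bookkeeping in the second step — carefully isolating the $z$-linear contribution of the cross-term in \eqref{eq:c1} and verifying that every other $z$-linear piece cancels — since once the first-order differential relation is in hand, the integration along rays through the basepoint $v_0$ is essentially automatic.
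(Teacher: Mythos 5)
Your argument is correct and matches the paper's proof in essence: both extract the degree-one $z$-homogeneous part of equation \eqref{eq:c1} to obtain the first-order relation $\partial_{v-v_0}c(v)=[v-v_0,v]=[v,v_0]$, and then integrate along rays through $v_0$, using that $c$ is polynomial to conclude. Your reformulation via $h(v)=c(v)-[v,v_0]$, which turns the ODE into $s\tilde h'(s)=0$, is a minor cosmetic variant of the paper's direct solution of $sc_v'(s)=s[v,v_0]$.
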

\begin{proof}
The degree $1$ $z$-homogeneous component of equation \eqref{eq:c1} is
\begin{equation*}
  0=\left\langle -4c(v)+2\partial_{v-v_0}c(v)-  2[v-v_0,v]+4c(v),z \right\rangle, 
\end{equation*}
or equivalently 
\begin{equation}
  \partial_{v-v_0}c(v)=[v,v_0].
\end{equation}
 
 Fix a vector $v\in \mathfrak v$, and consider the map $c_{v}\colon \mathbb  R\to\mathfrak z$ defined by
\[
c_{v}(s)=c(sv+(1-s)v_0).
\]
The function $c_{v}$ satisfies the differential equation
\begin{align*}
sc_{v}'(s)&= \partial_{s(v-v_0)}c(sv+(1-s)v_0)=\partial_{(sv+(1-s)v_0)-v_0}c(sv+(1-s)v_0)=[sv+(1-s)v_0,v_0]=s[v,v_0].  
\end{align*}
The solution of this differential equation is the linear function $c_v(s)=s[v,v_0]+c_v(0)$. For $s=1$, this gives the statement.
\end{proof}

\begin{corollary}\label{lem:simplification_by_left_transl}
  If $G$ has degree $2$ in the variable $t$, and $G_2=1$, then $G$ has the form
\begin{equation}\label{eq:G_form}
G(v,z,t)=t^2+\tfrac{1}{2}\left(\|v-v_0\|^2-b_1\right)t+\| z\|^2+\langle [v,v_0]+z_0, z\rangle+d(v).
\end{equation} 
\end{corollary}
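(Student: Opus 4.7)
The statement is essentially a bookkeeping consequence of the results already assembled in the excerpt, so my plan is to simply collect them. Recall that in the previous section we showed that any transnormal $F = G/t$ with $\deg_t G = 2$ forces, after normalization, $G_2 \equiv 1$,
\[
G_1(v) = \tfrac12\bigl(\|\Pi_{\mathfrak w}(v - v_0)\|^2 - b_1\bigr),
\]
and
\[
G_0(v,z) = \langle z, B(v)z\rangle + \langle c(v), z\rangle + d(v),
\]
with $B(v)$ an orthogonal projection $\Pi_{\mathfrak y}$ (Lemma \ref{lem:B(v)}). Once the Laplace condition is imposed, the lemma preceding the corollary upgrades $\Pi_{\mathfrak w}$ and $\Pi_{\mathfrak y}$ to the full identities, and the subsequent lemma computes $c(v) = [v,v_0] + z_0$. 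Thus all three pieces $G_0, G_1, G_2$ are already determined up to the single polynomial $d(v)$.

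The proof then consists of the following three lines: first, substitute $\Pi_{\mathfrak w} = \iid_{\mathfrak v}$ into the formula for $G_1$ to obtain $G_1(v) = \tfrac{1}{2}(\|v - v_0\|^2 - b_1)$. Second, substitute $B(v) = \Pi_{\mathfrak y} = \iid_{\mathfrak z}$ into the quadratic-in-$z$ term of $G_0$, producing $\langle z, z\rangle = \|z\|^2$. Third, substitute the computed $c(v) = [v,v_0] + z_0$ into the linear-in-$z$ term of $G_0$. Combining through $G = t^2 + G_1 t + G_0$ yields exactly the displayed form \eqref{eq:G_form}.

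There is no real obstacle here — every ingredient is already in place and the corollary is just an aggregation statement. The only thing worth flagging is that the polynomial $d(v)$ is not further constrained at this point in the argument; any residual conditions on $d$ (and on $v_0, z_0, b_1$) must come from the remaining transnormality equations \eqref{eq:c0} and \eqref{eq:c1} together with the lower-order Laplace recursions \eqref{eq:Laplacek}–\eqref{eq:Laplace2}, and will be treated in the subsequent analysis leading to case (iii) of the Main Theorem.
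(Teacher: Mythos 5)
Your proof is correct and reflects exactly the paper's (implicit) reasoning: the corollary is a bookkeeping statement obtained by substituting the lemma giving $\Pi_{\mathfrak w}=\iid_{\mathfrak v}$, $\Pi_{\mathfrak y}=\iid_{\mathfrak z}$ and the lemma giving $c(v)=[v,v_0]+z_0$ into the previously established form $G=t^2+G_1 t+G_0$. Your closing remark that $d(v)$ remains unconstrained at this stage and is pinned down only by equations \eqref{eq:c0}, \eqref{eq:c1} and the Laplace recursions is also accurate and matches the paper's subsequent development.
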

\begin{lemma}
     The equivalence class of $F$ (see Definition \ref{def:equiv}) contains a function $\tilde F$ such that bringing the polynomial $\tilde G=t\tilde F$ to the form analogous to \eqref{eq:G_form}, the parameter vectors $\tilde v_0\in \mathfrak v$ and $\tilde z_0\in \mathfrak z$ corresponding to $v_0$ and $z_0$ vanish.   
\end{lemma}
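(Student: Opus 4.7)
The plan is to translate away the vectors $v_0$ and $z_0$ by a single left translation, without needing the scaling factor $\bar t$ or the constants $c_1,c_2$ in Definition~\ref{def:equiv}. Concretely, I propose to take $\bar v=v_0$, $\bar z=-z_0/2$, $\bar t=1$, and set $\tilde F=F\circ L_{(v_0,-z_0/2,1)}$. By the composition rule \eqref{eq:left_tranlation_of_F}, this gives
\[
\tilde G(v,z,t)=t\tilde F(v,z,t)=G\!\left(v+v_0,\;z-\tfrac{z_0}{2}+\tfrac{1}{2}[v_0,v],\;t\right).
\]
Set $z':=z-z_0/2+\tfrac{1}{2}[v_0,v]$, and substitute into \eqref{eq:G_form} term by term.

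First, the coefficient of $t^2$ is unchanged, equal to $1$. The coefficient of $t$ becomes $\tfrac{1}{2}\bigl(\|(v+v_0)-v_0\|^2-b_1\bigr)=\tfrac{1}{2}(\|v\|^2-b_1)$, which already shows that the parameter playing the role of $v_0$ in the transformed polynomial is $\tilde v_0=0$ (and $\tilde b_1=b_1$).

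Next I track the dependence on $z$. Expanding $\|z'\|^2$ produces the $z$-linear contribution $2\langle z,\bar z\rangle+\langle z,[v_0,v]\rangle$, while expanding $\langle [v+v_0,v_0]+z_0,z'\rangle=\langle [v,v_0]+z_0,z'\rangle$ produces the $z$-linear contribution $\langle [v,v_0],z\rangle+\langle z_0,z\rangle$. Summing these, the total coefficient of $z$ is
\[
\bigl\langle 2\bar z+z_0,\,z\bigr\rangle+\langle z,[v_0,v]\rangle+\langle [v,v_0],z\rangle.
\]
The last two brackets cancel because $[v_0,v]=-[v,v_0]$, and the choice $\bar z=-z_0/2$ annihilates the first. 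Hence the coefficient of $z$ vanishes identically in $v$, matching the form $\langle [v,\tilde v_0]+\tilde z_0,z\rangle$ exactly when $\tilde v_0=0$ and $\tilde z_0=0$. The quadratic-in-$z$ piece of $\|z'\|^2$ contributes precisely $\|z\|^2$, as required.

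All remaining contributions are $z$-independent and polynomial in $v$, so they collect into some polynomial $\tilde d(v)$, yielding
\[
\tilde G(v,z,t)=t^2+\tfrac{1}{2}\bigl(\|v\|^2-b_1\bigr)t+\|z\|^2+\tilde d(v),
\]
which is exactly the form analogous to \eqref{eq:G_form} with $\tilde v_0=0$ and $\tilde z_0=0$. Since left translations are isometries, $\tilde F$ lies in the equivalence class of $F$ and is again isoparametric. There is no genuine obstacle: the only delicate step is the bookkeeping of the $z$-linear terms, where the cancellation $\langle z,[v_0,v]\rangle+\langle [v,v_0],z\rangle=0$ is exactly what forces the translation choice $\bar z=-z_0/2$ to work without any residual $v$-dependence.
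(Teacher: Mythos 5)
Your proposal is correct and takes exactly the same route as the paper: the left translation $L_{(v_0,-z_0/2,1)}$, applied via the composition rule \eqref{eq:left_tranlation_of_F}, kills both $v_0$ and $z_0$ simultaneously, and your term-by-term bookkeeping of the $z$-linear parts (including the cancellation $\langle z,[v_0,v]\rangle+\langle [v,v_0],z\rangle=0$) matches the paper's computation. Nothing to add.
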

\begin{proof}
Equation \eqref{eq:left_tranlation_of_F} shows that the composition $\tilde F=F\circ L_{(v_0,-z_0/2,1)}$ is equal to $\frac{\tilde G(v,z,t)}{t}$, where $\tilde G$ is the polynomial
\begin{align*}
    \tilde G(v,z,t)&=G\left( v+v_0,z+\tfrac{1}{2}([v_0,v]-z_0), t\right)\\
    &=t^2+\tfrac{1}{2}\left(\|v\|^2-b_1\right)t+\| z+\tfrac{1}{2}([v_0,v]-z_0)\|^2+\langle [v,v_0]+z_0, z+\tfrac{1}{2}([v_0,v]-z_0)\rangle+d(v+v_0)\\
    &=t^2+\tfrac{1}{2}\left(\|v\|^2-b_1\right)t+\| z\|^2 +\left(\tfrac{1}{4}\|z_0\|^2-\left\|\tfrac{1}{2}[v,v_0]+z_0\right\|^2+d(v+v_0)\right)\\
    &=t^2+\tfrac{1}{2}\left(\|v\|^2-b_1\right)t+\| z\|^2 +\tilde d(v).
\end{align*}
Thus, $\tilde F$ has the required properties.
 \end{proof}
By the lemma, we may assume without loss of generality that the parameter vectors $v_0$ and $z_0$ appearing in equation \eqref{eq:G_form} vanish and 
\[
G(v,z,t)=t^2+\tfrac{1}{2}\left(\|v\|^2-b_1\right)t+\| z\|^2+d(v).
\]
It remains to determine which polynomials $d(v)$ of $v$ will produce an isoparametric function $F$. Plugging the function $G_0(v,z)=\|z\|^2+d(v)$ into equation \eqref{eq:c1} and using $\Pi_{\mathfrak w}=\iid_{\mathfrak v}$ and $v_0=0$, we obtain 
\[
  b_0-\tfrac{1}{4}b_1^2=-\tfrac{1}{4}\|v\|^4-4d(v)+  2\partial_{v}d(v). 
\]
Let $v$ be an arbitrary fixed vector of $\mathfrak v$ and consider the function $d_v\colon \mathbb R\to \mathbb R$, $d_v(s)=d(sv)$. Then $d_v$ satisfies the differential equation
\[
sd'_v(s)=\partial_{sv} d(sv)
=2d_v(s)+\tfrac{1}{2}\left(b_0-\tfrac{1}{4}b_1^2+\tfrac{1}{4}\|sv\|^4\right).
\]
Introduce the polynomial function $\ell\colon \mathfrak v\to\mathbb R$, $\ell(v)=d(v)-\frac{1}{16}\left(\|v\|^4 -4b_0+b_1^2\right)$. 
The above differential equation is solved by the function 
\[
s\mapsto\frac{1}{16}\|v\|^4 s^4+\ell(v) s^2 +\frac{b_1^2-4b_0}{16}\]
with initial value $d_v(1)$ at $s=1$, therefore it coincides with $d_v$. Consequently, we have
\begin{equation}\label{eq:c_v}
d(sv)=d_v(s)=\frac{1}{16}\|v\|^4 s^4+\ell(v) s^2 +\frac{b_1^2-4b_0}{16}.
\end{equation}
\begin{lemma}
    The polynomial $\ell$ is a homogeneous quadratic polynomial.
\end{lemma}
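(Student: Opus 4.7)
The plan is to exploit equation \eqref{eq:c_v} directly, reading off the homogeneous components of $d$ from the fact that the right-hand side of \eqref{eq:c_v} is a polynomial in $s$ with very restricted exponents.

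First I would decompose the polynomial $d$ into its homogeneous components, writing $d = \sum_{k\geq 0} d_k$, where each $d_k\colon \mathfrak v\to \mathbb R$ is homogeneous of degree $k$. Then for every fixed $v\in \mathfrak v$ one has the identity of polynomials in $s$
\[
d(sv)=\sum_{k\geq 0} d_k(v)\, s^k.
\]
On the other hand, equation \eqref{eq:c_v} states that $d(sv)$ equals $\frac{1}{16}\|v\|^4 s^4+\ell(v)s^2+\frac{b_1^2-4b_0}{16}$, so the only exponents appearing on the right-hand side are $0$, $2$, and $4$.

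Comparing coefficients of $s^k$ for each $k$, I get $d_k\equiv 0$ whenever $k$ is odd or $k\geq 5$, together with $d_0=\frac{b_1^2-4b_0}{16}$, $d_4(v)=\frac{1}{16}\|v\|^4$, and $d_2(v)=\ell(v)$. The last identity already shows that $\ell$ agrees with the homogeneous quadratic polynomial $d_2$, hence $\ell$ is itself a homogeneous polynomial of degree two. As a sanity check, one can substitute $d=d_0+d_2+d_4$ into the definition $\ell(v)=d(v)-\frac{1}{16}(\|v\|^4-4b_0+b_1^2)$ and verify that the constant term and the quartic term cancel, leaving $\ell=d_2$.

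There is no real obstacle here: the argument is a direct extraction of homogeneous components, and the work was all done when deriving \eqref{eq:c_v}. The only mildly subtle point is to make it explicit that the equality \eqref{eq:c_v} should be read as an identity of polynomials in $s$ with coefficients depending polynomially on $v$, which justifies comparing coefficients of $s^k$ pointwise for every $v$.
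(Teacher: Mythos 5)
Your proof is correct and essentially matches the paper's: both arguments compare powers of $s$ in equation \eqref{eq:c_v}. The paper does it by directly computing $\ell(sv)=s^2\ell(v)$, while you decompose $d$ into homogeneous components and identify $\ell$ with $d_2$; these are the same idea stated in slightly different terms.
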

\begin{proof}
    Using the definition of $\ell$ and equation \eqref{eq:c_v}, we obtain
\begin{equation*}
 \ell(sv)=  d(sv)-\frac{1}{16}\left(\|v\|^4 s^4-4b_0+b_1^2\right)=s^2\ell(v),
\end{equation*}
as claimed.
\end{proof}
The lemma implies that there exists a self-adjoint operator $ Q\in\End(\mathfrak v)$ such that $\ell(v)=\langle v, Q v\rangle$. Thus, $d$ can be expressed as
\begin{equation*}
d(v)=\frac{1}{16}\|v\|^4 + \langle v, Q v\rangle+\frac{b_1^2-4b_0}{16}.
\end{equation*}
\begin{lemma}
 The operator $ Q$ satisfies the equation
 \[
  Q^2=\frac{b_1^2-4b_0}{64}\iid_{\mathfrak v}.
 \]
\end{lemma}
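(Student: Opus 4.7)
The plan is to exploit the one transnormality identity that the preceding arguments have not yet fully used, namely \eqref{eq:c0}, which after the reductions $\Pi_{\mathfrak w}=\iid_{\mathfrak v}$ and $v_0=0$ collapses to the simple form
\[
\|v\|^2 G_0 \;=\; \sum_{i=1}^n \bigl(\mathbf D_i G_0\bigr)^2.
\]
I would substitute $G_0(v,z)=\|z\|^2+d(v)$ together with the explicit expression
$d(v)=\tfrac{1}{16}\|v\|^4+\langle v,Qv\rangle+\tfrac{b_1^2-4b_0}{16}$ already obtained, expand both sides, and read off the resulting identity involving $Q$ alone.

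The core calculation is that of $\mathbf D_i G_0=\partial_{\mathbf e_i}G_0-\tfrac12\partial_{[\mathbf e_i,v]}G_0$. Since $\partial_{\mathbf e_i}$ sees only the $v$-part of $G_0$ and $[\mathbf e_i,v]\in\mathfrak z$ sees only the $z$-part, the defining identity \eqref{eq:ad_J} together with the skew-adjointness of $J_z$ gives $\partial_{[\mathbf e_i,v]}\|z\|^2=2\langle[\mathbf e_i,v],z\rangle=-2\langle \mathbf e_i,J_zv\rangle$, whence
\[
\mathbf D_i G_0=\bigl\langle\tfrac14\|v\|^2 v+2Qv+J_zv,\ \mathbf e_i\bigr\rangle,\qquad
\sum_{i=1}^n (\mathbf D_i G_0)^2=\bigl\|\tfrac14\|v\|^2 v+2Qv+J_zv\bigr\|^2.
\]
Expanding the square and using $\langle v,J_zv\rangle=0$ together with $\|J_zv\|^2=\|z\|^2\|v\|^2$, equation \eqref{eq:c0} becomes, after cancelling the $\|z\|^2\|v\|^2$ terms and the $\|v\|^6$, $\|v\|^2\langle v,Qv\rangle$ contributions coming from $\|v\|^2 d(v)$,
\[
\tfrac{b_1^2-4b_0}{16}\,\|v\|^2 \;=\; 4\|Qv\|^2+4\langle Qv,J_zv\rangle.
\]

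Since the left-hand side is independent of $z$, the polynomial identity splits according to the degree in $z$. The $z$-free part reads $\|Qv\|^2=\tfrac{b_1^2-4b_0}{64}\|v\|^2$, i.e.\ $\langle Q^2 v,v\rangle=\tfrac{b_1^2-4b_0}{64}\langle v,v\rangle$ for every $v\in\mathfrak v$; because $Q^2$ is self-adjoint, polarization of this quadratic-form identity gives $Q^2=\tfrac{b_1^2-4b_0}{64}\iid_{\mathfrak v}$, as claimed. (As a by-product, the $z$-linear part yields $\langle Qv,J_zv\rangle=0$ for all $v\in\mathfrak v$, $z\in\mathfrak z$, an intertwining-type relation between $Q$ and the Clifford action that will presumably be needed in the subsequent classification but is not required for the present lemma.) The step most prone to error is keeping the signs straight when converting $\partial_{[\mathbf e_i,v]}\|z\|^2$ into a Clifford-action expression; once that is correct, the remaining manipulations are routine algebraic bookkeeping.
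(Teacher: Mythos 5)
Your proposal is correct and follows essentially the same route as the paper: plug $G_0=\|z\|^2+d(v)$ into \eqref{eq:c0}, recognise $\sum_i(\mathbf D_iG_0)^2=\bigl\|\tfrac14\|v\|^2v+2Qv+J_zv\bigr\|^2$, isolate the $z$-free part to get $4\|Qv\|^2=\tfrac{b_1^2-4b_0}{16}\|v\|^2$, and polarise using the self-adjointness of $Q^2$. The paper reaches the $z$-free identity by substituting $z=0$ rather than expanding and separating by $z$-degree, but this is only a cosmetic difference.
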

\begin{proof}
    Plug the function 
    \[
    G_0(v,z)=\|z\|^2+d(v)=\|z\|^2+\frac{1}{16}\|v\|^4 + \langle v, Q v\rangle+\frac{b_1^2-4b_0}{16}
    \] 
    into equation \eqref{eq:c0}. For the expression on the right-hand side, we obtain 
\begin{equation}\label{eq:gradiens_resz1}
  \partial_{\mathbf e_i}G_0-\tfrac12\partial_{[\mathbf e_i, v]} G_0 =\tfrac{1}{4}\|v\|^2\langle \mathbf e_i,v\rangle+2\langle \mathbf e_i, Q v\rangle-\langle [\mathbf e_i, v],z\rangle =\left\langle \mathbf e_i,\tfrac{1}{4}\|v\|^2v+2 Q v +J_zv\right\rangle,
\end{equation}
and
\begin{equation}\label{eq:26RHS}
    \sum_{i=1}^n\left(\partial_{\mathbf e_i}G_0-\tfrac12\partial_{[\mathbf e_i, v]} G_0\right)^2 =\left\|\tfrac{1}{4}\|v\|^2v+2 Q v+J_zv\right\|^2.
\end{equation}
Thus, using $\Pi_{\mathfrak w}=\iid_{\mathfrak v}$ and $v_0=0$, equation \eqref{eq:c0} gives
\begin{equation}\label{eq:c0'}
    \|v\|^2 (\|z\|^2+d(v))=  \left\|\tfrac{1}{4}\|v\|^2\ v +2 Q v+J_zv\right\|^2
\end{equation}
Substituting  $z=0$ yields
\[
 \|v\|^2 d(v)=\left\|\tfrac{1}{4}\|v\|^2\ v +2 Q v\right\|^2.
\]
Equating the degree 2 homogeneous parts of the two sides of this equation, we get
\[
\|v\|^2 \frac{b_1^2-4b_0}{16}=4\| Q v \|^2, 
\]
or equivalently,
\[
\left\langle v, \left(\frac{b_1^2-4b_0}{16}\iid_{\mathfrak v}-4 Q^2\right)v\right\rangle=0\qquad\text{ for all }v\in \mathfrak v. \qedhere
\]
\end{proof}
As the eigenvalues of the self-adjoint operator are real numbers, the lemma implies the inequality $b_1^2-4b_0\geq 0$.
\begin{corollary}\label{cor:5.8}
 The linear space $\mathfrak v$ has an orthogonal decomposition $\mathfrak v=\mathfrak v_+\obot \mathfrak v_-$ with orthogonal projections $\Pi_{\mathfrak v_{\pm}}\colon \mathfrak v\to \mathfrak v_{\pm}$ such that 
\[
 Q=\lambda\left(\Pi_{\mathfrak v_{+}}-\Pi_{\mathfrak v_{-}}\right),
\]  
where $\lambda=\frac{1}{8}\sqrt{b_1^2-4b_0}$ is the non-negative eigenvalue of $ Q$.
\end{corollary}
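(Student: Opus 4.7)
The plan is to apply the spectral theorem directly. Since $Q \in \End(\mathfrak v)$ is self-adjoint, there exists an orthonormal basis of $\mathfrak v$ consisting of eigenvectors of $Q$ with real eigenvalues. The preceding lemma $Q^2 = \frac{b_1^2-4b_0}{64}\,\iid_{\mathfrak v} = \lambda^2 \iid_{\mathfrak v}$ immediately restricts these eigenvalues: if $Qv = \mu v$ with $v\neq 0$, then $\mu^2 v = Q^2 v = \lambda^2 v$, forcing $\mu \in \{+\lambda, -\lambda\}$.

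Assuming $\lambda > 0$, I will set $\mathfrak v_{\pm} = \ker(Q \mp \lambda\,\iid_{\mathfrak v})$. Since $+\lambda$ and $-\lambda$ are distinct and $Q$ is self-adjoint, these eigenspaces are orthogonal, and the spectral decomposition gives $\mathfrak v = \mathfrak v_{+} \obot \mathfrak v_{-}$. For any $v\in\mathfrak v$ one writes $v = \Pi_{\mathfrak v_+} v + \Pi_{\mathfrak v_-} v$ and applies $Q$ component-wise to obtain $Qv = \lambda\,\Pi_{\mathfrak v_+}v - \lambda\,\Pi_{\mathfrak v_-}v$, which is exactly the asserted formula.

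The degenerate case $\lambda = 0$, i.e., $b_0 = b_1^2/4$, must be treated separately but is even easier: from $\|Qv\|^2 = \langle v, Q^2 v\rangle = 0$ we conclude $Q = 0$, so any orthogonal splitting $\mathfrak v = \mathfrak v_+ \obot \mathfrak v_-$ (for instance the trivial choice $\mathfrak v_+ = \mathfrak v$, $\mathfrak v_- = \{0\}$) trivially satisfies $Q = 0 = \lambda(\Pi_{\mathfrak v_+} - \Pi_{\mathfrak v_-})$. Since the whole argument is a one-line consequence of the spectral theorem together with the polynomial relation $\mu^2 = \lambda^2$, I anticipate no substantive obstacle; the only thing to be a little careful about is that the statement is phrased allowing either summand to be trivial, which is exactly what the $\lambda = 0$ case needs.
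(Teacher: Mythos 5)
Your proof is correct and follows exactly the same route as the paper: the corollary is a direct consequence of the spectral theorem applied to the self-adjoint operator $Q$, together with the relation $Q^2 = \lambda^2\iid_{\mathfrak v}$, with the degenerate case $\lambda=0$ handled by the trivial splitting $\mathfrak v_+=\mathfrak v$, $\mathfrak v_-=\{0\}$ — which is precisely the remark the paper appends after the statement.
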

If $\lambda>0$, then $\mathfrak v=\mathfrak v_+\obot \mathfrak v_-$ is the eigenspace decomposition of $ Q$. If $\lambda=0$, then $ Q=0$ and the decomposition $\mathfrak v=\mathfrak v_+\obot \mathfrak v_-$ is not unique. In this case, we set $\mathfrak v_+=\mathfrak v$ and $\mathfrak v_-=\{\mathbf 0\}$.
\begin{proposition}
The subspaces $\mathfrak v_{\pm}$ are $\mathrm{Cl}(\mathfrak z,q)$-submodules of $\mathfrak v$.
\end{proposition}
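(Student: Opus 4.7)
The plan is to extract a commutation relation $[Q,J_z]=0$ from equation \eqref{eq:c0'} and then conclude that the eigenspaces $\mathfrak v_{\pm}$ of $Q$ are $J_z$-invariant for every $z\in\mathfrak z$. Since the Clifford algebra $\mathrm{Cl}(\mathfrak z,q)$ is generated (as an algebra) by $\mathfrak z$, invariance under all $J_z$ is exactly what it means for $\mathfrak v_{\pm}$ to be $\mathrm{Cl}(\mathfrak z,q)$-submodules. In the degenerate case $\lambda=0$ the decomposition was chosen as $\mathfrak v_+=\mathfrak v$, $\mathfrak v_-=\{\mathbf 0\}$, which is trivially a decomposition into Clifford submodules, so we may assume $\lambda>0$, and then $\mathfrak v_{\pm}$ are the genuine $\pm\lambda$-eigenspaces of $Q$.

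The first step is a short expansion. Substitute the explicit form
\[
d(v)=\tfrac{1}{16}\|v\|^4+\langle v, Qv\rangle+\tfrac{b_1^2-4b_0}{16}
\]
into equation \eqref{eq:c0'}. On the right-hand side I expand the squared norm
$\bigl\|\tfrac{1}{4}\|v\|^2 v+2Qv+J_zv\bigr\|^2$, using that $J_z$ is skew-adjoint (so $\langle v,J_zv\rangle=0$) and the Clifford identity $\|J_zv\|^2=\|z\|^2\|v\|^2$. Comparing with the left-hand side, the $\|v\|^6$, $\|z\|^2\|v\|^2$ and $\|v\|^2\langle v,Qv\rangle$ terms cancel, and the relation $\|Qv\|^2=\langle v,Q^2v\rangle=\tfrac{b_1^2-4b_0}{64}\|v\|^2$ (from the previous lemma) cancels the constant term $\tfrac{b_1^2-4b_0}{16}\|v\|^2$ against $4\|Qv\|^2$. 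What remains is
\[
\langle Qv,J_zv\rangle=0 \qquad\forall\,v\in\mathfrak v,\,z\in\mathfrak z.
\]

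The second step is polarization. Replacing $v$ by $v_1+v_2$ in the identity above gives
$\langle Qv_1,J_zv_2\rangle+\langle Qv_2,J_zv_1\rangle=0$. Using that $Q$ is self-adjoint and $J_z$ is skew-adjoint, I rewrite
\[
\langle Qv_1,J_zv_2\rangle=\langle v_1,QJ_zv_2\rangle,\qquad
\langle Qv_2,J_zv_1\rangle=-\langle v_1,J_zQv_2\rangle,
\]
so $\langle v_1,(QJ_z-J_zQ)v_2\rangle=0$ for all $v_1,v_2$, which forces $QJ_z=J_zQ$ for every $z\in\mathfrak z$.

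The final step is routine: since $J_z$ commutes with the self-adjoint operator $Q$, it preserves each eigenspace of $Q$; in particular $J_z\mathfrak v_{\pm}\subseteq \mathfrak v_{\pm}$. As the operators $\{J_z:z\in\mathfrak z\}$ generate the Clifford algebra $\mathrm{Cl}(\mathfrak z,q)$, the subspaces $\mathfrak v_{\pm}$ are Clifford submodules, completing the proof. The only mildly delicate point is the bookkeeping in the first expansion; once the $Q^2$-identity is invoked, everything else is mechanical, so I do not anticipate a real obstacle.
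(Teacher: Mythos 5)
Your proposal is correct. The key step---extracting the scalar identity $\langle Qv, J_zv\rangle = 0$ from equation \eqref{eq:c0'}---is exactly the paper's starting point (obtained there by isolating the degree~$1$ $z$-homogeneous part; your expansion of the full right-hand side together with the $Q^2$-identity is an equivalent bookkeeping). Where you diverge is in the finishing move: you polarize $\langle Qv,J_zv\rangle=0$ and convert it, using that $Q$ is self-adjoint and $J_z$ skew-adjoint, into the commutation relation $[Q,J_z]=0$, after which invariance of the eigenspaces is automatic. The paper instead writes an arbitrary $v=v_++v_-$ and manipulates inner products directly to show $\langle v_+,J_zv_-\rangle=0$, using the explicit formula $Qv=\lambda(v_+-v_-)$. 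Both routes are short and valid; yours is slightly more conceptual and would apply verbatim if $Q$ had more than two eigenvalues, while the paper's is tailored to the $\pm\lambda$-eigenspace structure at hand. You also correctly dispose of the degenerate case $\lambda=0$ at the outset, which the paper handles with the same one-line remark.
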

\begin{proof}
The statement is trivial for $\lambda=0$, so consider the case $\lambda>0$. Equating the degree 1 $z$-homogeneous parts of \eqref{eq:c0'} gives 
\begin{equation*}
    0=  2\left\langle \tfrac{1}{4}\|v\|^2\ v +2 Q v,J_zv\right\rangle=4\left\langle   Q v,J_zv\right\rangle=4\left\langle[v, Q v], z\right\rangle
\end{equation*}
Let $v_+\in\mathfrak v_+$, $v_-\in\mathfrak v_-$, and $z\in\mathfrak z$ be arbitrary vectors, $v=v_++v_-$. Then adding the identities
\begin{align*}
\left\langle   v_+,J_zv_-\right\rangle&=\left\langle   v_+-v_-,J_zv_-\right\rangle\\
\left\langle   v_+,J_zv_-\right\rangle&=-\left\langle   v_-,J_zv_+\right\rangle=\left\langle   v_+-v_-,J_zv_+\right\rangle
\end{align*}
we get 
\[
2\left\langle   v_+,J_zv_-\right\rangle=\left\langle   v_+-v_-,J_zv\right\rangle=\frac{1}{\lambda}\left\langle   Q v,J_zv\right\rangle=\frac{1}{\lambda}\left\langle[v, Q v], z\right\rangle=0.
\]
Consequently, $\left\langle   v_+,J_zv_-\right\rangle=-\left\langle  J_z v_+,v_-\right\rangle=0$. Thus, we have $J_zv_+\in \mathfrak v_-^{\perp}=\mathfrak v_+$ and $J_zv_-\in \mathfrak v_+^{\perp}=\mathfrak v_-$, hence $\mathfrak v_+$ and $\mathfrak v_-$ are $\mathrm{Cl}(\mathfrak z,q)$-submodules of $\mathfrak v$.
\end{proof}

\begin{theorem}\label{thm:verification}
  For every polynomial function $G(v,z,t)$ of the form 
  \[
G(v,z,t)=t^2+\tfrac{1}{2}\left(\|v\|^2-b_1\right)t+\| z\|^2+\frac{1}{16}\|v\|^4 + \lambda\langle v,(\Pi_{\mathfrak v_{+}}-\Pi_{\mathfrak v_{-}}) v\rangle+4\lambda^2,
\]
where $\lambda\geq 0$, and $\Pi_{\mathfrak v_{\pm}}$ are orthogonal projections onto complementary orthogonal $\mathrm{Cl}(\mathfrak z,q)$-submodules $\mathfrak v_{\pm}$ of $\mathfrak v$, the function $F=G/t$ is isoparametric.
\end{theorem}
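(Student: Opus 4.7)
The strategy is a direct verification of both the transnormality and Laplace conditions for the specific $G$ given in the statement. Writing $G = G_0 + G_1 t + t^2$ with $G_1 = \tfrac12(\|v\|^2 - b_1)$ and $G_0 = \|z\|^2 + \tfrac{1}{16}\|v\|^4 + \langle v, Qv\rangle + 4\lambda^2$, where $Q = \lambda(\Pi_{\mathfrak v_+} - \Pi_{\mathfrak v_-})$, the plan is to show $\|\nabla F\|^2 = b_1 F + b_0$ with $b_0 = \tfrac14 b_1^2 - 16\lambda^2$, and $\Delta F = (m + n/2 + 1)F + a_0$ for a suitable constant $a_0$, using the formulas \eqref{eq:gradient_F} and \eqref{eq:Laplace_G}.

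For transnormality, the four coefficient equations \eqref{eq:k=-1}--\eqref{eq:k=2} obtained from expanding in powers of $t$ must be checked. Equations \eqref{eq:k=1} and \eqref{eq:k=2} follow at once since $G_1$ is independent of $z$ and $\mathbf D_i G_1 = \langle v, \mathbf e_i\rangle$ gives $\sum_i (\mathbf D_i G_1)^2 = \|v\|^2 = 2 G_1 + b_1$. The heart of the calculation is \eqref{eq:k=-1}, which, after using $2G_1 + b_1 = \|v\|^2$, becomes the polynomial identity
\[
\|v\|^2 G_0 = \Bigl\|\tfrac14\|v\|^2 v + 2Qv + J_z v\Bigr\|^2;
\]
once this is established, \eqref{eq:k=0} follows by substituting the same vector formula $\sum_i \mathbf e_i\, \mathbf D_i G_0 = \tfrac14\|v\|^2 v + 2Qv + J_z v$ into the inner product $\sum_i \langle v, \mathbf e_i\rangle \mathbf D_i G_0$, and reduces to the numerical relation $b_0 = \tfrac14 b_1^2 - 16\lambda^2$. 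Expanding the right-hand side of the displayed identity with $Q^2 = \lambda^2 \mathrm{Id}_{\mathfrak v}$ and $\|J_z v\|^2 = \|z\|^2\|v\|^2$, the diagonal terms reproduce $\|v\|^2 G_0$, and the cross term $\tfrac12\|v\|^2\langle v, J_z v\rangle$ vanishes by skew-adjointness of $J_z$. The remaining cross term $4\langle Qv, J_z v\rangle$ is precisely where the $\mathrm{Cl}(\mathfrak z, q)$-submodule hypothesis enters: decomposing $v = v_+ + v_-$ with $v_\pm \in \mathfrak v_\pm$, $\mathrm{Cl}$-invariance gives $J_z v_\pm \in \mathfrak v_\pm$, so orthogonality of $\mathfrak v_+$ and $\mathfrak v_-$ kills the $+/-$ mixed terms and skew-adjointness of $J_z$ on each summand kills the remainder.

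For the Laplace condition I would expand \eqref{eq:Laplace_G} term by term. Since the only $z$-dependence of $G$ is through $\|z\|^2$, we have $\Delta_{\mathfrak z} G = 2m$, and $\partial_{[v, \mathbf e_i]} G = 2\langle z, [v, \mathbf e_i]\rangle$ is linear in $v$, yielding $\sum_i \partial_{\mathbf e_i}\partial_{[v, \mathbf e_i]} G = 2\sum_i \langle z, [\mathbf e_i, \mathbf e_i]\rangle = 0$. A routine calculation gives $\Delta_{\mathfrak v} G = \tfrac{n+2}{4}\|v\|^2 + 2\lambda(\dim \mathfrak v_+ - \dim \mathfrak v_-) + nt$, and combining this with $\partial_t G = G_1 + 2t$, $\partial_t^2 G = 2$, and $(t + \|v\|^2/4)\Delta_{\mathfrak z} G = 2mt + \tfrac{m}{2}\|v\|^2$, the coefficients of $t$ and of $\|v\|^2$ cancel exactly, leaving the constant $a_0 = 2\lambda(\dim \mathfrak v_+ - \dim \mathfrak v_-) + \tfrac12(m + n/2 + 1)b_1$.

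The principal obstacle is the polynomial identity displayed above, and specifically the vanishing of $\langle Qv, J_z v\rangle$: this is the step that both uses and motivates the $\mathrm{Cl}(\mathfrak z, q)$-submodule hypothesis on $\mathfrak v_\pm$. Everything else is elementary bookkeeping on polynomials of degree at most four.
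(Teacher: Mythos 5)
Your proposal is correct and follows essentially the same route as the paper: both verify the Laplace condition by direct expansion of \eqref{eq:Laplace_G}, and both reduce the transnormality check to the vector identity $\mathbf D_i G_0 = \langle \mathbf e_i, \tfrac14\|v\|^2 v + 2Qv + J_zv\rangle$ together with $Q^2 = \lambda^2\iid_{\mathfrak v}$, $\|J_zv\|^2 = \|z\|^2\|v\|^2$, and the vanishing of the cross terms $\langle v, J_zv\rangle$ and $\langle Qv, J_zv\rangle$, the latter being exactly where the $\mathrm{Cl}(\mathfrak z,q)$-submodule hypothesis is used. The bookkeeping you outline — including the value $b_0 = \tfrac14 b_1^2 - 16\lambda^2$ and the constant $a_0 = 2\lambda(n_+ - n_-) + \tfrac12(m + n/2 + 1)b_1$ — agrees with the paper's computation.
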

\begin{proof}
To prove the Laplace condition, we have to show that the left-hand side of \eqref{eq:Laplace_G}, i.e.,
\begin{equation*}
\Delta_{\mathfrak v} G+\left(t+\frac{\|v\|^2}{4}\right)\Delta_{\mathfrak z} G-\left(m+\frac{n}{2}+1\right)\partial_t G+t\partial_t^2G+\sum_{i=1}^n \partial_{\mathbf e_i}\partial_{[ v,\mathbf e_i]}G
\end{equation*}
is constant.
Computing the terms in this expression separately,
\begin{gather*}
 \Delta_{\mathfrak v} G=nt+\frac{n+2}{4}\|v\|^2 +2\lambda(n_+-n_-),  \qquad
 \Delta_{\mathfrak z} G=2m,\\
 \partial_t G=2t+\tfrac{1}{2}\left(\|v\|^2-b_1\right), \qquad
 \partial_t^2 G=2,\qquad
 \partial_{\mathbf e_i}\partial_{[ v,\mathbf e_i]}G=0,
\end{gather*}
where $n_{\pm}=\dim \mathfrak v_{\pm}$. Plugging these values into the main expression, we get 
\begin{align*}
nt&+\frac{n+2}{4}\|v\|^2 +2\lambda(n_+-n_-)+2m\left(t+\frac{\|v\|^2}{4}\right)-\left(m+\frac{n}{2}+1\right)\left(2t+\frac{1}{2}\left(\|v\|^2-b_1\right)\right)+2t\\
&=2\lambda(n_+-n_-)+\frac{1}{2}\left(m+\frac{n}{2}+1\right)b_1.
\end{align*}
As this is a constant, the Laplace condition holds.

    Now we check transnormality. Writing $G$ as a polynomial of $t$, the coefficient are 
\[
G_2\equiv 1,\qquad G_1(v,z)=\tfrac{1}{2}\left(\|v\|^2-b_1\right), \qquad G_0(v,z)=\| z\|^2+\frac{1}{16}\|v\|^4 + \langle v, Q v\rangle+4\lambda^2,
\]
where $ Q=\lambda(\Pi_{\mathfrak v_{+}}-\Pi_{\mathfrak v_{-}})$, as before. As $G_2$ is constant, the transnormality condition is equivalent to the system of equations \eqref{eq:k=-1}--\eqref{eq:k=2}. Consider first equation \eqref{eq:k=-1}.
Repeating the computation leading to equation \eqref{eq:26RHS}, the sum appearing in equation \eqref{eq:k=-1} equals 
\begin{equation*}
    \sum_{i=1}^n\left(\partial_{\mathbf e_i}G_0-\tfrac12\partial_{[\mathbf e_i, v]} G_0\right)^2 =\left\|\tfrac{1}{4}\|v\|^2v+2 Q v+J_zv\right\|^2.
\end{equation*}
Set $v_{\pm}=\Pi_{\mathfrak v_{\pm}}(v)$. As $\mathfrak v_+$ and $\mathfrak v_-$ are $J_z$-invariant orthogonal subspaces of $\mathfrak v$, we have the orthogonality relations 
\[J_zv\perp v \quad\text{ and }\quad J_zv=J_zv_++J_zv_-\perp \lambda(v_+-v_-)= Q v.
\]   
Thus,
\[
\left\|\tfrac{1}{4}\|v\|^2v+2 Q v+J_zv\right\|^2=
\left\|\tfrac{1}{4}\|v\|^2v+2 Q v\|^2+\|J_zv\right\|^2=\tfrac{1}{16}\|v\|^6+\langle v, Q v\rangle \|v\|^2+4\lambda^2\|v\|^2+\|z\|^2\|v\|^2,
\]
consequently,
\[
\sum_{i=1}^n\left(\partial_{\mathbf e_i}G_0-\tfrac12\partial_{[\mathbf e_i, v]} G_0\right)^2 =\|v\|^2G_0(v,z).
\]
Equation \eqref{eq:k=-1} follows from this by a simple algebraic manipulation.

Next we deal with equation \eqref{eq:k=0}. Observe that equation \eqref{eq:gradiens_resz1} holds in our situation as well. Combining this equation with the equation 
\begin{equation}\label{eq:gradient_resz2}
    \partial_{\mathbf e_i}G_1-\tfrac12\partial_{[\mathbf e_i, v]} G_1=\langle \mathbf e_i,v\rangle, 
\end{equation}
the first sum on the right-hand side of \eqref{eq:k=0} is
\[
\sum_{i=1}^n2\left(\partial_{\mathbf e_i}G_0-\tfrac12\partial_{[\mathbf e_i, v]} G_0\right)\left(\partial_{\mathbf e_i}G_1-\tfrac12\partial_{[\mathbf e_i, v]} G_1\right)=2\left\langle \tfrac{1}{4}\|v\|^2v+2 Q v +J_zv ,v \right\rangle=\tfrac{1}{2}\|v\|^4+4\langle v, Q v\rangle.
\]
We also have 
\[
\sum_{\alpha=1}^m(\partial_{\mathbf f_\alpha} G_0)^2=\sum_{\alpha=1}^m 4\langle \mathbf f_\alpha,z\rangle ^2=4\|z\|^2.
\]
Making use of the last two equations, a straightforward computation shows that equation \eqref{eq:k=0} holds with the choice $b_0=\frac{1}{4}b_1^2-16\lambda^2$ of the constant $b_0$.

Equation \eqref{eq:k=1} can be verified easily using 
equation \eqref{eq:gradient_resz2} and the fact that $G_1$ does not depend on $z$. Indeed,
\[
-2G_{1}+ \sum_{i=1}^n\left(\partial_{\mathbf e_i}G_1-\tfrac12\partial_{[\mathbf e_i, v]} G_1\right)^2+
\sum_{\alpha=1}^m2\partial_{\mathbf f_\alpha} G_0\partial_{\mathbf f_\alpha} G_{1} =-2G_{1}+ \|v\|^2=b_1.
\]

Finally, equation \eqref{eq:k=2} holds because $G_1$ does not depend on $z$.
\end{proof}

Following Lemma \ref{lem:simplification_by_left_transl}, we made some assumptions on the polynomial $G$ which could be achieved by a left translation with an element of the form $(v_0,z_0,1)$, and classified only those polynomials that satisfy these simplifying assumptions. We also used the assumption $G_2=1$, which could be attained by multiplying $G$ with a nonzero constant. Therefore, the general form of those polynomials $\tilde G$ which provide an isoparametric function $\tilde F(v,z,t)=\tilde G(v,z,t)/t$, and have degree $2$ in $t$ is 
\begin{align*}
  \tilde G(v,z,t)&=c_1 G\circ L_{(-v_0,-z_0,1)}(v,z,t)=c_1 G(v-v_0, z-z_0+\tfrac{1}{2}[v,v_0], t)\\
  &=c_1\left(\left(t+\left\|\frac{v-v_0}{2}\right\|^2\right)^2-b_1 t+\|z-z_0+\tfrac{1}{2}[v,v_0]\|^2+ \lambda\langle v-v_0,(\Pi_{\mathfrak v_{+}}-\Pi_{\mathfrak v_{-}}) (v-v_0)\rangle+4\lambda^2\right).  
\end{align*}
This completes the proof of the Main Theorem.

\section{Miscellaneous further results\label{sec:misc}}

\subsection{The focal variety of the new isoparametric functions\label{subsec:focal_variety}} The level sets of
the isoparametric functions of type (i) of the Main Theorem \ref{thm:main} are parallel horospheres, which have empty focal variety. The geometry of the level sets and the focal varieties of the isoparametric functions belonging to class (ii)  were studied in detail by D\'iaz-Ramos and Dom\'inguez-V\'azquez \cite{Ramos_Vazquez}, so here we deal only with isoparametric functions belonging to class (iii). We recall that the geometry of sphere-like isoparametric hypersurfaces, associated to certain functions of class (iii), were investigated  by the authors in \cite{Csikos_Horvath_isoparametric1}.  

Consider the level sets of a function 
\[
F(v,z,t)=\frac{c_1}{t}\left(\!\left(t+\left\|\frac{v-v_0}{2}\right\|^2\right)^{\!\!2}\!+\|z-z_0+\tfrac{1}{2}[v,v_0]\|^2+ \lambda\langle v-v_0,(\Pi_{\mathfrak v_{+}}\!\!-\Pi_{\mathfrak v_{-}}) (v-v_0)\rangle+4\lambda^2\right)+c_2,
\]
where the parameters are as in the Main Theorem \ref{thm:main} (iii). We may assume without loss of generality that $c_1=1$ and $c_2=0$, i.e.,
\begin{equation}\label{eq:F_def}
F(v,z,t)=\frac{1}{t}\left(\left(t+\left\|\frac{v-v_0}{2}\right\|^2\right)^{\!\!2}\!+\|z-z_0+\tfrac{1}{2}[v,v_0]\|^2+ \lambda\langle v-v_0,(\Pi_{\mathfrak v_{+}}\!\!-\Pi_{\mathfrak v_{-}}) (v-v_0)\rangle+4\lambda^2\right).
\end{equation}
To find the equation of the focal varieties of an isoparametric function, we have to find the extremal values of the function. Keeping $v$ and $z$ fixed $\lim_{t\to+\infty}F(v,z,t)=+\infty$, therefore $F$ has no maximum.
To search for its minimum, write $F$ in the form $F(v,z,t)=\frac{t^2+G_1(v,z)t+G_0(v,z)}t$.
\begin{lemma}
    The polynomial $G_0(v,z)$ is non-negative.
\end{lemma}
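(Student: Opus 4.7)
The plan is to read off $G_0(v,z)$ explicitly from \eqref{eq:F_def} and then reduce the desired inequality to a single application of AM--GM. Expanding the square $(t+\|(v-v_0)/2\|^2)^2$ and collecting the coefficient of $t^0$ gives
\[
G_0(v,z)=\tfrac{1}{16}\|v-v_0\|^4+\bigl\|z-z_0+\tfrac12[v,v_0]\bigr\|^2+\lambda\bigl\langle v-v_0,(\Pi_{\mathfrak v_+}-\Pi_{\mathfrak v_-})(v-v_0)\bigr\rangle+4\lambda^2.
\]
Of the four summands, the first, second, and fourth are manifestly $\geq 0$, so only the third can possibly be negative.

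Next I would set $w=v-v_0$ and decompose $w=w_++w_-$ with $w_\pm\in\mathfrak v_\pm$. Because the splitting $\mathfrak v=\mathfrak v_+\obot\mathfrak v_-$ is orthogonal, the potentially negative term collapses to $\lambda(\|w_+\|^2-\|w_-\|^2)$; discarding the non-negative $\mathfrak z$-valued squared norm, the claim reduces to
\[
\tfrac{1}{16}\|w\|^4+4\lambda^2+\lambda\|w_+\|^2-\lambda\|w_-\|^2\;\geq\;0.
\]

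The key (and essentially only) step is the AM--GM inequality $\tfrac{1}{16}\|w\|^4+4\lambda^2\geq \lambda\|w\|^2$, valid since $\lambda\geq 0$. Using $\|w\|^2=\|w_+\|^2+\|w_-\|^2$, this exactly cancels the negative contribution $-\lambda\|w_-\|^2$ and leaves $G_0(v,z)\geq 2\lambda\|w_+\|^2\geq 0$. I do not anticipate any real obstacle: all the difficult structural work was already carried out in the proof of the Main Theorem, and for this particular positivity bound the $\mathrm{Cl}(\mathfrak z,q)$-submodule property of $\mathfrak v_\pm$ is not needed---only their orthogonality enters. As a byproduct, the proof also records when equality holds, which will be useful for describing the focal variety in the next step: one needs simultaneously $z-z_0+\tfrac12[v,v_0]=0$, $w_+=0$, and $\tfrac{1}{16}\|w\|^4=4\lambda^2$, i.e.\ $\|w_-\|^2=4\lambda$.
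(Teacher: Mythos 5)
Your proof is correct, and it is essentially the same as the paper's: both drop the manifestly non-negative term $\|z-z_0+\tfrac12[v,v_0]\|^2$, pass to the quantities $N_\pm=\|\Pi_{\mathfrak v_\pm}(v-v_0)\|^2$, and then use the same algebraic fact (the paper writes $\tfrac{1}{16}(N_++N_-)^2+\lambda(N_+-N_-)+4\lambda^2=N_+\bigl(\tfrac{1}{16}(N_++2N_-)+\lambda\bigr)+\bigl(\tfrac{N_-}{4}-2\lambda\bigr)^2$ as an explicit sum of non-negatives; your AM--GM bound $\tfrac{1}{16}\|w\|^4+4\lambda^2\geq\lambda\|w\|^2$ is equivalent to the non-negativity of that same square). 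One small slip in your closing aside: $\tfrac{1}{16}\|w\|^4=4\lambda^2$ gives $\|w\|^2=8\lambda$, not $4\lambda$; also note that the focal variety is the minimum set of $F$, which allows $G_0>0$ (with $t=\sqrt{G_0}$), so the $G_0=0$ locus you describe is only its boundary at $t=0$, outside the model.
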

\begin{proof} Introducing the notation $N_{\pm}=\|\Pi_{\mathfrak v_{\pm}}(v-v_0)\|^2$, we have
\begin{align*}
    G_0(v,z)&=\frac{1}{16}\left\|v-v_0\right\|^4+\|z-z_0+\tfrac{1}{2}[v,v_0]\|^2+ \lambda\langle v-v_0,(\Pi_{\mathfrak v_{+}}-\Pi_{\mathfrak v_{-}}) (v-v_0)\rangle+4\lambda^2\\
    &\geq \frac{1}{16}\left\|v-v_0\right\|^4+ \lambda\langle v-v_0,(\Pi_{\mathfrak v_{+}}-\Pi_{\mathfrak v_{-}}) (v-v_0)\rangle+4\lambda^2\\
    &= \frac{1}{16}(N_{+}+N_{-})^2+ \lambda(N_{+}-N_{-})+4\lambda^2=N_{+}\left(\frac{1}{16}(N_{+}+2N_{-})+ \lambda \right)+\left(\frac{N_{-}}{4}-2\lambda\right)^2\geq 0 ,
\end{align*}
where in the last step, we used the assumption $\lambda\geq 0$.
\end{proof}
Observe that $G_1(v,z)=\frac{1}{2}(N_++N_-)$. By the lemma, keeping $v$ and $z$ fixed, $F(v,z,t)$ is minimized at $t=\sqrt{G_0(v,z)}$. Therefore,  
\[F(v,z,t)\geq F\left(v,z,\sqrt{G_0(v,z)}\right)=\frac{N_{+}+N_{-}}{2}+2\sqrt{G_0(v,z)}\geq \frac{N_{-}}{2}+2\left|\frac{N_{-}}{4}-2\lambda\right|\geq 4\lambda.\]
$F$ attains its least possible value $4\lambda$ if and only if 
\begin{align*}
z=z_0+\tfrac{1}{2}[v_0,v], \qquad N_{+}=0, \qquad N_{-}\leq 8\lambda,\quad \text{and} \quad t=2\lambda-\tfrac{1}{4}N_{-}.
\end{align*}
Writing these equations in terms of the variables $(v,z,t)$, we get the equation of the focal variety:
\[
z=z_0+\tfrac{1}{2}[v_0,v],\qquad \Pi_{\mathfrak v_{+}}(v-v_0)=0, \qquad t=2\lambda-\tfrac{1}{4}\|\Pi_{\mathfrak v_{-}}(v-v_0)\|^2.
\]

This is the intersection of a paraboloid of dimension $n_-$ and the model of the Damek--Ricci space, which is non-empty if $\lambda >0$, and becomes empty when $\lambda=0$. We remark that in the case $n_-=0$, the focal variety consists of a single point, and the regular level sets are concentric geodesic spheres. In the case $\lambda=0$, the level sets are parallel horospheres.

\begin{proposition}
 Assume $\lambda >0$. Let  $S_-$ be the Lie subgroup of $S$ corresponding to the Lie subalgebra $\mathfrak s_-=\mathfrak v_-\oplus \mathfrak z\oplus \mathfrak a$ of $\mathfrak s$. Then $S_-$ is also a Damek--Ricci space isometrically embedded into $S$. Set $v_0'=\Pi_{\mathfrak v_-}(v_0)$ and $v_0''=\Pi_{\mathfrak v_+}(v_0)$. Let $\mathcal F_{(v_0',z_0,-2\lambda)}\subset S_-$ be the focal variety of the distance-like isoparametric function $F_{(v_0',z_0,-2\lambda)}$ on $S_-$ defined by \eqref{eq:distance_like}. Then the focal variety $\mathcal F$ of the function $F$ is the left translate of $\mathcal F_{(v_0',z_0,-2\lambda)}$ by $(v_0'',0,1)$.
\end{proposition}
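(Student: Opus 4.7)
The plan is to reduce the claim to the description of the focal variety $\mathcal F$ of $F$ obtained just above the proposition. Specifically, I would recognize the distance-like function $F_{(v_0', z_0, -2\lambda)}$ as a class (iii) function on the subspace $S_-$, compute its focal variety from the same formula, apply the given left translation explicitly, and check that the outcome matches $\mathcal F$.

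First, I verify that $S_-$ is a Damek--Ricci subspace isometrically embedded in $S$. By Corollary \ref{cor:5.8}, $\mathfrak v_-$ is a $\mathrm{Cl}(\mathfrak z, q)$-submodule of $\mathfrak v$, so restricting $J$ to $\mathfrak v_-$ yields a Clifford module structure satisfying $\|J_z v\| = \|z\|\|v\|$ for $v \in \mathfrak v_-$, and $\mathfrak s_- = \mathfrak v_- \obot \mathfrak z \obot \mathfrak a$ inherits the required Damek--Ricci algebra structure. In the half-space coordinates, the embedding $S_- \hookrightarrow S$ identifies $S_-$ with $\mathfrak v_- \oplus \mathfrak z \times \mathbb R_+$.

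Next, expanding $(t - 2\lambda + \|\tfrac{v - v_0'}{2}\|^2)^2$ brings $F_{(v_0', z_0, -2\lambda)}$ into the class (iii) form \eqref{eq:F_def} for the Damek--Ricci space $S_-$ with parameters $v_0^{S_-} = v_0'$, $z_0^{S_-} = z_0$, $\lambda^{S_-} = \lambda$, and trivial decomposition $\mathfrak v^{S_-} = \mathfrak v_-^{S_-} = \mathfrak v_-$, $\mathfrak v_+^{S_-} = \{0\}$, modulo an added constant and a constant multiple of $t$, neither of which changes the level sets. Applying the focal variety description derived above gives
\[
\mathcal F_{(v_0', z_0, -2\lambda)} = \bigl\{\bigl(v_0' + u',\, z_0 + \tfrac{1}{2}[v_0', u'],\, 2\lambda - \tfrac{1}{4}\|u'\|^2\bigr) : u' \in \mathfrak v_-,\, \|u'\|^2 \leq 8\lambda\bigr\}.
\]

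Finally, I apply the left translation $L_{(v_0'', 0, 1)}(v, z, t) = (v + v_0'', z + \tfrac{1}{2}[v_0'', v], t)$ from \eqref{eq:left_translation} to a generic point of $\mathcal F_{(v_0', z_0, -2\lambda)}$. Using $v_0 = v_0' + v_0''$ and bilinearity of the bracket, the image point has coordinates $v_0 + u'$, $z_0 + \tfrac{1}{2}[v_0, u'] + \tfrac{1}{2}[v_0'', v_0']$, $2\lambda - \tfrac{1}{4}\|u'\|^2$. The hard (but short) step is to show the extra term $\tfrac{1}{2}[v_0'', v_0']$ vanishes: by \eqref{eq:ad_J}, $\langle [v_0'', v_0'], z\rangle = \langle J_z v_0'', v_0'\rangle$, and since $\mathfrak v_\pm$ are $\mathrm{Cl}(\mathfrak z, q)$-submodules the vector $J_z v_0''$ lies in $\mathfrak v_+$, which is orthogonal to $v_0' \in \mathfrak v_-$; as $z$ is arbitrary, $[v_0'', v_0'] = 0$. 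The resulting point agrees termwise with the parametric description of $\mathcal F$ given just above the proposition, so $L_{(v_0'', 0, 1)}(\mathcal F_{(v_0', z_0, -2\lambda)}) = \mathcal F$, as claimed.
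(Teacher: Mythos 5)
Your proof is correct and follows essentially the same route as the paper: identify the focal variety of $F_{(v_0',z_0,-2\lambda)}$ inside $S_-$, apply the left translation $L_{(v_0'',0,1)}$, and match against the description of $\mathcal F$ derived just before the proposition, with the bracket-vanishing $[v_0',v_0'']=0$ as the one substantive intermediate step. The only differences are cosmetic: the paper applies the inverse translation and checks membership equations while you translate a parametrization forward; and the paper cites \cite[Section 7]{Csikos_Horvath_isoparametric1} for the equations of $\mathcal F_{(V_0,Z_0,t_0)}$, whereas you rederive them by recognizing the distance-like function as a class (iii) function on $S_-$ with $\mathfrak v_+^{S_-}=\{0\}$ and reading off the focal variety from the formula already established in this paper, which makes the argument slightly more self-contained. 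You also spell out the proof that $[v_0'',v_0']=0$ via the Clifford-submodule property, which the paper takes as known.
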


\begin{proof}
A point $(v,z,t)$ belongs to $L_{(v_0'',0,1)}\mathcal F_{(v_0',z_0,-2\lambda)}$ if and only if 
\[
L_{(v_0'',0,1)}^{-1}(v,z,t)=\left(v-v_0'',z-\tfrac{1}{2}[v_0'',v],t\right)\in \mathcal F_{(v_0',z_0,-2\lambda)}.
\]

It was computed in \cite[Section 7]{Csikos_Horvath_isoparametric1}, (where the function $F_{x_0}$ was denoted by $D_{x_0}$), that the equation of $\mathcal F_{(V_0,Z_0,t_0)}\subset S_-$ for $V_0\in \mathfrak v_-$ and $t_0<0$ is
\begin{equation*}
(V,Z,t)\in \mathfrak v_-\oplus \mathfrak z\times \mathbb R_+,\qquad    \left\|V-V_0\right\|^2=-4(t+t_0),\qquad Z=Z_0-\tfrac12[V,V_0].
\end{equation*}
In our case, $\left(v-v_0'',z-\tfrac{1}{2}[v_0'',v],t\right)$ belongs to $\mathcal F_{(v_0',z_0,-2\lambda)}$ if and only if
\begin{equation*}
v-v_0''\in \mathfrak v_-,\qquad    \left\|v-v_0''-v_0'\right\|^2=-4(t-2\lambda),\qquad z-\tfrac{1}{2}[v_0'',v]=z_0-\tfrac12[v-v_0'',v_0'].
\end{equation*}
The first condition $v-v_0''\in \mathfrak v_-$ is equivalent to $\Pi_{\mathfrak v_+}(v-v_0'')=\Pi_{\mathfrak v_+}(v-v_0'-v_0'')=\Pi_{\mathfrak v_+}(v-v_0)=0.$ Assuming $v-v_0\in \mathfrak v_-$, the second condition is equivalent to
\[
t=2\lambda-\tfrac{1}{4}\|v-v_0\|^2=2\lambda-\tfrac{1}{4}\|\Pi_{\mathfrak v_{-}}(v-v_0)\|^2.
\]
Finally, the third condition is equivalent to $z=z_0+\tfrac{1}{2}[v_0,v]$, since using the fact that $[\mathfrak v_+,\mathfrak v_-]=0$, we have $[v_0',v_0'']=0$ and
\[
[v_0'',v]-[v-v_0'',v_0']=[v_0'',v]+[v_0',v]=[v_0,v].
\]
This proves 
\[
(v,z,t)\in L_{(v_0'',0,1)}\mathcal F_{(v_0',z_0,-2\lambda)}\iff (v,z,t)\in \mathcal F.\qedhere
\]
\end{proof}

\subsection{The mean curvatures of the new isoparametric hypersurfaces}
Consider a regular level set $\Sigma_c=F^{-1}(c)$ of the function $F$ in \eqref{eq:F_def}. $\Sigma_c$ is a hypersurface of constant mean curvature, and it is also a tube of radius $r$ about the focal variety $\mathcal F$. Our goal is to express the mean curvature of $\Sigma_c$ as a function of the radius $r$. 
\begin{proposition}
    The trace $h$ of the Weingarten operator of $\Sigma_c$ with respect to the unit normal vector field $\frac{1}{\sqrt{b\circ F}}\nabla F$ equals
    \[h=-\left(m+\tfrac n2\right)\coth r-\tfrac12(n_+-n_-)\cosech r.\]
\end{proposition}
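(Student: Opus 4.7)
The plan is to invoke Proposition \ref{prop:tube_mean_curvature} and then convert the level value $c=F$ into the tube radius $r$. First I would identify the constants $a_{0},b_{0},b_{1}$ in the isoparametric equations for the particular function \eqref{eq:F_def}. Because left translations are isometries of $S$ and the Main Theorem was reduced to the form with $v_{0}=z_{0}=0$ precisely via such a translation (Lemma \ref{lem:simplification_by_left_transl}), these constants are the same as for the normalized polynomial. Expanding the square in \eqref{eq:F_def} with $v_{0}=z_{0}=0$ yields $G_{1}(v,z)=\tfrac{1}{2}\|v\|^{2}$; comparing this with the expression $G_{1}=\tfrac{1}{2}(\|v\|^{2}-b_{1})$ from the proof of Theorem \ref{thm:verification} forces $b_{1}=0$. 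Substituting into the identities $b_{0}=\tfrac{1}{4}b_{1}^{2}-16\lambda^{2}$ and $a_{0}=2\lambda(n_{+}-n_{-})+\tfrac{1}{2}(m+n/2+1)b_{1}$ obtained there gives $b_{0}=-16\lambda^{2}$ and $a_{0}=2\lambda(n_{+}-n_{-})$. Together with Theorem \ref{thm:Laplace} and the relation $b(x)=\tilde b(x)+x^{2}$ this reads off
\[
a(x)=\left(m+\tfrac{n}{2}+1\right)x+2\lambda(n_{+}-n_{-}),\qquad b(x)=x^{2}-16\lambda^{2}.
\]

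Proposition \ref{prop:tube_mean_curvature} then delivers, after elementary simplification,
\[
h=\frac{-2a(c)+b'(c)}{2\sqrt{b(c)}}=\frac{-(m+n/2)\,c-2\lambda(n_{+}-n_{-})}{\sqrt{c^{2}-16\lambda^{2}}}.
\]

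To turn $c$ into the tube radius $r$, I would invoke that along a geodesic $\gamma$ issuing orthogonally from the focal variety $\mathcal F$ the composition $c(r):=F(\gamma(r))$ satisfies $c'(r)=\|\nabla F\|_{\gamma(r)}=\sqrt{b(c)}$, so $c$ solves the separable ODE $dc/dr=\sqrt{c^{2}-16\lambda^{2}}$ with initial condition $c(0)=4\lambda$; the initial value is the minimum of $F$ attained on $\mathcal F$ as computed in Subsection \ref{subsec:focal_variety}, so here we are in the case $\lambda>0$ where a focal variety actually exists. Integrating gives $c=4\lambda\cosh r$ and hence $\sqrt{b(c)}=4\lambda\sinh r$. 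Plugging these into the expression for $h$ and simplifying via $\cosh r/\sinh r=\coth r$ and $1/\sinh r=\cosech r$ produces the asserted identity.

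The only real obstacle is the bookkeeping in the first paragraph: one must keep careful track of how the constants $b_{1},b_{0},a_{0}$ from the normalized derivation in the proof of Theorem \ref{thm:verification} transfer to the translated form \eqref{eq:F_def}. Once they are in hand, the remainder is a routine substitution into the general mean-curvature formula of Proposition \ref{prop:tube_mean_curvature} together with an elementary integration.
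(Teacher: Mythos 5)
Your proposal is correct and follows essentially the same route as the paper: identify $a(x)=(m+n/2+1)x+2\lambda(n_+-n_-)$ and $b(x)=x^2-16\lambda^2$ (via $b_1=0$), apply Proposition \ref{prop:tube_mean_curvature}, and convert the level value $c$ to the tube radius $r$ using $c(0)=4\lambda$ to obtain $c=4\lambda\cosh r$. The only cosmetic difference is that you integrate the ODE $c'(r)=\sqrt{b(c)}$ directly, whereas the paper writes the equivalent integral $r(c)=\int_{c_0}^c dx/\sqrt{b(x)}$ citing Wang.
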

\begin{proof}
The Laplace condition for $F$ takes the form
\begin{equation*}
\Delta F=\left(m+\tfrac{n}{2}+1\right)F+a_0
\end{equation*}
by equation \eqref{eq:shifted_eigen}, where 
$$
a_0=2\lambda(n_+-n_-)+\left(m+\tfrac{n}{2}+1\right)b_1=2\lambda(n_+-n_-)
$$ 
according to equation \eqref{eq:Laplace_G} and 
the proof of Theorem \ref{thm:verification}. (Note that the constant $b_1$ appearing in Theorem \ref{thm:verification} vanishes in our case.)

The function $b$ appearing in the transnormality condition $\|\nabla F\|^2=b\circ F$ has the form $$b(x)=x^2+\tilde b(x)=x^2+b_1x+b_0=x^2+b_0$$ by Lemma \ref{prop:form_of_b}, where  $b_0=-16\lambda^2$ from the definition of $\lambda$ in Corollary \ref{cor:5.8}.

In Subsection \ref{subsec:focal_variety}, we saw that the minimal value of $F$ is $c_0=4\lambda$. Thus, we can express the radius $r$ of the tube $\Sigma_c$ as a function of $c$ as follows (see \cite{Wang}):
\[
r(c)=\int_{c_0}^c \frac{\id x}{\sqrt{b(x)}}=\int_{4\lambda}^c\frac{\mathrm dx}{\sqrt{x^2-16\lambda^2}}=\left[\mathrm{arcosh}\frac x{4\lambda}\right]_{4\lambda}^c=\mathrm{arcosh}\frac c{4\lambda}.\]
The inverse function is $c(r)=4\lambda \cosh r$.

We can compute the trace $h$ of the Weingarten operator of $\Sigma_c$ with respect to the unit normal vector field $\frac{1}{\sqrt{b\circ F}}\nabla F$ with the help of Proposition \ref{prop:tube_mean_curvature}. As
\[a(x)=(m+\tfrac n2+1)x+2\lambda(n_+-n_-),\qquad b(x)=x^2-16\lambda^2,\]
we get 
\begin{align*}
h&=\frac{-2 a(c)+b'(c)}{2\sqrt{b(c)}}=\frac{-2(m+\frac n2+1)c-4\lambda(n_+-n_-)+2c}{2\sqrt{c^2-16\lambda^2}}=\frac{-(m+\frac n2)c-2\lambda(n_+-n_-)}{\sqrt{c^2-16\lambda^2}}\\
&=\frac{-(m+\frac n2)4\lambda\cosh r-2\lambda(n_+-n_-)}{4\lambda\sinh r}
=-\left(m+\tfrac n2\right)\coth r-\tfrac12(n_+-n_-)\cosech r.\qedhere
\end{align*}    
\end{proof}

\subsection{An isoparametric function on $\mathbb C\mathbf H^k$ with focal variety $\mathbb R\mathbf H^k$\label{subsec:6.3}}
The complex hyperbolic space $\mathbb C\mathbf H^k$ is a Damek--Ricci space with $m=1$ and $n=2(k-1)$. It is convenient to represent the spaces $\mathfrak v$, $\mathfrak z$, and $\mathfrak a$ as $\mathfrak v=\mathbb C^{k-1}$ and $\mathfrak z=\mathfrak a=\mathbb R$. Then for $v\in \mathfrak v$ and $z\in \mathfrak z$, we have $J_{z}v= z\mathfrak i v$.

The real hyperbolic space $\mathbb R\mathbf H^k$ is embedded into $\mathbb C\mathbf H^k$ as a totally geodesic submanifold. It can be thought of as a degenerated Damek--Ricci space with components $\mathfrak v'=\mathbb R^{k-1}\leq \mathbb C^{k-1}=\mathfrak v$, $\mathfrak z'=0$, and $\mathfrak a'=\mathfrak a=\mathbb R$. It is known \cite[Theorem 7.4]{Montiel} that tubes about $\mathbb R\mathbf H^k$ in $\mathbb C\mathbf H^k$
are parallel isoparametric submanifolds. 

The map $\sigma\colon \mathbb C\mathbf H^k\to \mathbb C\mathbf H^k$, $\sigma(v,z,t)=(\bar v,-z,t)$ is a an involutive isometry of $\mathbb C\mathbf H^k$ with fixed point set $\mathbb R\mathbf H^k$, so geometrically, $\sigma$ is the reflection of $\mathbb C\mathbf H^k$ in $\mathbb R\mathbf H^k$. Given a point $(v,z,t)\in \mathbb C\mathbf H^k$, the closest point of $\mathbb R\mathbf H^k$ to $(v,z,t)$ is the midpoint of the geodesic segment connecting $(v,z,t)$ to $\sigma(v,z,t)$. Hence, denoting the distance of $(v,z,t)$ from $\mathbb R\mathbf H^k$ by $r(v,z,t)$, we have
\[
r(v,z,t)=\tfrac{1}{2}d((v,z,t),(\bar v,-z,t)).
\]
Applying \eqref{eq:distance_like}, we obtain
\[
\cosh^2(r(v,z,t))=\cosh^2\left(\tfrac{1}{2}d((v,z,t),(\bar v,-z,t))\right)=\frac{1}{t^2}\left(\left(t+\tfrac12\left\|\mathrm{Im}\, v\right\|^2\right)^2+\left\|z-\tfrac12[\mathrm{Re}\, v,\mathbf {i}\,\mathrm{Im}\, v]\right\|^2\right).
\]
The resulting isoparametric function 
\begin{equation}\label{eq:RHn--CHn}
    F(v,z,t)=\frac{1}{t^2}\left(\left(t+\tfrac12\left\|\mathrm{Im}\, v\right\|^2\right)^2+ \left\|z-\tfrac12[\mathrm{Re}\, v,\mathbf{i}\,\mathrm{Im}\, v]\right\|^2\right)
\end{equation}
has the form of a quartic polynomial divided by $t^2$, and for this reason, it is not contained in our classification. Furthermore, it is not the square of any of the functions in the classification. In the next subsection, we show that isoparametric functions of similar type cannot exist in Damek--Ricci spaces different from $\mathbb C\mathbf H^k$. 

\subsection{An attempt to generalize isoparametric function \eqref{eq:RHn--CHn}}
Consider an arbitrary Damek--Ricci space and a decomposition $\mathfrak v=\mathfrak v_1\operp \mathfrak v_2$ of $\mathfrak v$ into the direct sum of two orthogonal subspaces. Denote by $\Pi_i\colon\mathfrak v\to\mathfrak v_i$ the orthogonal projection onto $\mathfrak v_i$ ($i=1,2$) and consider the function 
\begin{equation}\label{eq:general_F}
    F(v,z,t)=\dfrac{\left(t+\frac12\|\Pi_2v\|^2\right)^2+\left\|z-\tfrac12[\Pi_1v,\Pi_2v]\right\|^2}{t^2}.
\end{equation}
Clearly, $F$ is a generalization of the function defined in \eqref{eq:RHn--CHn}. The following proposition tells us under what conditions this function $F$ will be transnormal or isoparametric.

\begin{proposition}\label{prop:6.4}\mbox{}
\begin{enumerate}[label=\emph{(\roman*)}]
    \item The function $F$ defined above is transnormal if and only if $J_{\mathfrak z}\mathfrak v_1\subseteq \mathfrak v_2$ and $J_{\mathfrak z}\mathfrak v_2\subseteq \mathfrak v_1$. 
    \item $F$ is isoparametric if and only if the Damek--Ricci space is the complex hyperbolic space $\mathbb C\mathbf H^k$ with $n=2(k-1)$ and $\mathfrak v_1$ is a maximal totally real subspace of $\mathfrak v=\mathbb C^{k-1}$.
\end{enumerate}   
\end{proposition}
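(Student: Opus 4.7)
For both parts the strategy is to compute $\|\nabla F\|^2$ and $\Delta F$ directly from the left-invariant frame \eqref{eq:left_inv} and the Laplacian formula \eqref{eq:Laplace}, then extract constraints from the requirement that each be a function of $F$ alone. I write $F = H/t^2$ with $H = P^2 + \|W\|^2$, $P = t + \tfrac12\|\Pi_2 v\|^2$, $W = z - \tfrac12[\Pi_1 v,\Pi_2 v]$, and $D_i = \partial_{\mathbf e_i} - \tfrac12\partial_{[\mathbf e_i,v]}$.

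\textit{For part (i),} I first record that $\partial_t H = 2P$, $\partial_{\mathbf f_\alpha} H = 2\langle W,\mathbf f_\alpha\rangle$, $D_i P = \langle \Pi_2 v,\mathbf e_i\rangle$, and
\[
D_iW = -[\Pi_1\mathbf e_i,\Pi_2 v] - \tfrac12[\Pi_1\mathbf e_i,\Pi_1 v] - \tfrac12[\Pi_2\mathbf e_i,\Pi_2 v],
\]
so that
\[
t^4\|\nabla F\|^2 = t\sum_i(D_iH)^2 + t^2\sum_\alpha(\partial_{\mathbf f_\alpha}H)^2 + t^2(\partial_tH)^2 - 4tH\partial_tH + 4H^2.
\]
The last two summands of $D_iW$ insert pure-$v$ polynomial contributions into $\sum_i(D_iH)^2$ that cannot be matched by the transnormal right-hand side (a polynomial in $H$ and $t^2$). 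Forcing their vanishing for all $v$ gives, via $\langle[u,v],z\rangle = \langle J_zu,v\rangle$, the condition $[\mathfrak v_1,\mathfrak v_1] = [\mathfrak v_2,\mathfrak v_2] = 0$, i.e.\ $J_{\mathfrak z}\mathfrak v_i\subseteq\mathfrak v_{3-i}$ for $i=1,2$. For the converse, these inclusions combined with $J_z^2 = -\|z\|^2\iid$ force $J_z$ (for $z\neq 0$) to restrict to a scaled isometry $\mathfrak v_1\to\mathfrak v_2$, so $n_+:=\dim\mathfrak v_1 = \dim\mathfrak v_2=:n_-$; using $\{J_{\mathbf f_\alpha}\mathbf e_i : \mathbf e_i\in\mathfrak v_1\}$ as an orthonormal basis of $\mathfrak v_2$, the cross-products collapse to $\sum_i(D_iH)^2 = 4H\|\Pi_2 v\|^2$, and the formula simplifies to $\|\nabla F\|^2 = 4F(F-1)$.

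\textit{For part (ii),} granting (i), I compute $\Delta F$ from \eqref{eq:Laplace}. The required inputs are $\Delta_{\mathfrak z}H = 2m$, $\partial_t^2 H = 2$, together with
\[
\Delta_{\mathfrak v}H = \bigl(n_- + 2 + \tfrac{m}{2}\bigr)\|\Pi_2 v\|^2 + 2n_-t + \tfrac{m}{2}\|\Pi_1 v\|^2, \qquad \sum_i\partial_{\mathbf e_i}\partial_{[v,\mathbf e_i]}H = m(\|\Pi_2 v\|^2 - \|\Pi_1 v\|^2),
\]
both of which rest on the Clifford identity $\sum_{\mathbf e_i\in\mathfrak v_1}\|[\mathbf e_i,\Pi_2 v]\|^2 = m\|\Pi_2 v\|^2$ (and its $1\leftrightarrow 2$ twin). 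After collecting terms,
\[
\Delta F = (2m+n+4)F + C + \Bigl[\tfrac{n_- - n_+}{2} + m - 1\Bigr]\frac{\|\Pi_2 v\|^2}{t}
\]
for a constant $C$. Since $\|\Pi_2 v\|^2/t$ is independent of $z$ whereas $F$ depends non-trivially on $z$ precisely when $m\geq 1$, the bracketed coefficient must vanish in that case; coupled with $n_+ = n_-$ from (i), this forces $m = 1$, so $S = \mathbb C\mathbf H^k$. With $n_+ = n_- = k-1$, the condition $J_{\mathfrak z}\mathfrak v_1 = \mathfrak v_2$ (i.e.\ $\mathfrak v_2 = i\mathfrak v_1$, $\mathfrak v_1\perp i\mathfrak v_1$) is exactly the statement that $\mathfrak v_1$ is a maximal totally real subspace of $\mathbb C^{k-1}$. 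The converse comes from Subsection \ref{subsec:6.3}: $F$ equals $\cosh^2 d(\cdot,\mathbb R\mathbf H^k)$, which is isoparametric by the classical result on tubes about $\mathbb R\mathbf H^k$ in $\mathbb C\mathbf H^k$.

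\textit{Main obstacle.} The crux is the Clifford orthonormality identity $\sum_{\mathbf e_i\in\mathfrak v_1}\|[\mathbf e_i,\Pi_2 v]\|^2 = m\|\Pi_2 v\|^2$, a short consequence of $J_{\mathbf f_\alpha}|_{\mathfrak v_1}$ being a scaled isometry onto $\mathfrak v_2$. It is what makes both the transnormal simplification in (i) and the sharp Laplace coefficient $(n_- - n_+)/2 + m - 1$ in (ii) crystallize; without it, the cross-terms would not line up. Once this identity and the forced equality $n_+ = n_-$ are in hand, the obstruction $m = 1$ is immediate.
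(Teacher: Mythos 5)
Your strategy matches the paper's: compute $\|\nabla F\|^2$ and $\Delta F$ from the left-invariant frame, then read off constraints. Two of the three steps, however, have genuine gaps in the way you extract the constraints.

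\textbf{Part (i), necessity.} You assert that the ``last two summands'' of $D_iW$ produce ``pure-$v$ polynomial contributions'' to $\sum_i(D_iH)^2$ that ``cannot be matched by the transnormal right-hand side (a polynomial in $H$ and $t^2$)''. There are two problems. First, $D_iH = 2P\,D_iP + 2\langle W,D_iW\rangle$ and $W$ carries $z$, so these contributions are not pure-$v$; what actually matters is the quadratic-in-$z$ piece. Second, the transnormality condition only says $\|\nabla F\|^2 = b\circ F$ for some $\mathcal C^2$ function $b$; the fact that this forces a specific algebraic form is not free. The paper handles this via the Jacobi criterion (algebraic dependence of $G/t^2$ and an auxiliary rational function), then compares the top $z$-homogeneous components of the resulting polynomial identity, evaluating at $v\in\mathfrak v_1$ and $v\in\mathfrak v_2$. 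Your sketch presupposes the outcome of that lemma.

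\textbf{Part (i), sufficiency.} The claim that $\{J_{\mathbf f_\alpha}\mathbf e_i : \mathbf e_i\in\mathfrak v_1\}$ is an orthonormal basis of $\mathfrak v_2$ is false when $m>1$: that family has $mn_1$ elements while $\dim\mathfrak v_2 = n_1$. The identity $\sum_i(D_iH)^2 = 4H\|\Pi_2 v\|^2$ is nonetheless correct, but the right way to see it (as the paper does) is: under $J_{\mathfrak z}\mathfrak v_2\subseteq\mathfrak v_1$, the vector $J_W\Pi_2 v$ lands in $\mathfrak v_1$, so $\|\Pi_1 J_W\Pi_2 v\|^2 = \|J_W\Pi_2 v\|^2 = \|W\|^2\|\Pi_2 v\|^2$, and similarly $D_iW=0$ for $\mathbf e_i\in\mathfrak v_2$ by $[\mathfrak v_2,\mathfrak v_2]=0$; no special basis is needed.

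\textbf{Part (ii).} Here your computation is sound and in fact your collected formula
\[
\Delta F = (2m+n+4)F + C + \Bigl[\tfrac{n_- - n_+}{2} + m - 1\Bigr]\tfrac{\|\Pi_2 v\|^2}{t}
\]
is correct: the $\|\Pi_1 v\|^2/t$ contributions from $\Delta_{\mathfrak v}$, from the $\Delta_{\mathfrak z}$ drift, and from $\sum_i\partial_{\mathbf e_i}\partial_{[v,\mathbf e_i]}$ cancel to zero. (The paper's intermediate formulas for $\Delta F$ contain arithmetic slips---the $\Delta_{\mathfrak z}$ term is missing factors of $m$ and the $\Delta_{\mathfrak v}$ term has $n_2$ where $2n_2$ should appear---which is why the paper reaches its conclusion by restricting to $v\in\mathfrak v_1$; with the corrected formula that restriction gives no information, and your route through the $\|\Pi_2 v\|^2/t$ coefficient is the right one.) Your extraction of $m=1$ from $n_+=n_-$ and the vanishing of that coefficient, plus the reference back to Subsection~\ref{subsec:6.3} for the converse, is the correct argument. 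You should, however, spell out why the coefficient must vanish: fix $v$ with $\Pi_2 v\neq 0$ and $t>0$, vary $z$; the right side of $a(F)-(2m+n+4)F-C = c_0\|\Pi_2 v\|^2/t$ is constant while $F$ ranges over an interval, forcing $a(x)-(2m+n+4)x$ to be constant there, and varying $(v,t)$ then forces $c_0=0$. (This is where the hypothesis $m\geq 1$ enters; for $m=0$ the function $F$ has no $z$-dependence and the argument breaks down.)

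In short: the Laplacian half of the argument is correct and arguably cleaner than the paper's, but the transnormality half skips the algebraic-dependence machinery that makes ``these terms must vanish'' into a theorem rather than a plausibility claim, and the sufficiency step cites a fact (the orthonormal-basis claim) that is false for $m>1$.
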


\begin{proof}
Denote by $n_i$ the dimension of $\mathfrak v_i$ for $i=1,2$. Choose the orthonormal basis $\mathbf e_1,\dots,\mathbf e_n$ of $\mathfrak v$ in such a way that $\mathbf e_1,\dots,\mathbf e_{n_1}$ span $\mathfrak v_1$, and  $\mathbf e_{n_1+1},\dots,\mathbf e_{n}$ span $\mathfrak v_2$. Let $G(v,z,t)=t^2F(v,z,t)$ be the polynomial in the numerator of $F$. The derivatives of $F$ with respect to the left-invariant vector fields \eqref{eq:left_inv} can be computed straightforwardly. If $i\leq n_1$, then
\begin{align*}
    \mathbf E_i(F)&=t^{-3/2}\left\langle z-\tfrac12[\Pi_1v,\Pi_2v],-[\mathbf e_i,\Pi_2v+v]\right\rangle
=t^{-3/2}\left \langle J_{ z-\tfrac12[\Pi_1v,\Pi_2v]}(\Pi_2v+v),\mathbf e_i\right\rangle.
\end{align*}
For $i> n_1$, we obtain
\begin{align*} 
    \mathbf E_i(F)&=t^{-3/2}\left((2t+\|\Pi_2v\|^2)\langle\Pi_2v,\mathbf e_i\rangle+\left\langle z-\tfrac12[\Pi_1v,\Pi_2v],-[\mathbf e_i,v-\Pi_1v]\right\rangle\right)\\
&=t^{-3/2}\left \langle (2t+\|\Pi_2v\|^2)\Pi_2v+J_{ z-\tfrac12[\Pi_1v,\Pi_2v]}\Pi_2v,\mathbf e_i\right\rangle.
\end{align*}
Furthermore,
\begin{align*}
\mathbf F_\alpha(F)&=\tfrac1t\partial_{\mathbf f_\alpha}G=\tfrac2t\langle z-\tfrac12[\Pi_1v,\Pi_2v],\mathbf f_\alpha\rangle,\\
\mathbf A(F)&=\tfrac1t\partial_tG-2F=\tfrac2t(t+\tfrac12\|\Pi_2v\|^2)-2F.
\end{align*}
Then
\begin{align*}
\|\nabla F\|^2
&=t^{-3}\underbrace{\left(\left\|\Pi_1J_{ z-\tfrac12[\Pi_1v,\Pi_2v]}(\Pi_2v+v)\right\|^2+\left\|(2t+\|\Pi_2v\|^2)\Pi_2v+\Pi_2J_{ z-\tfrac12[\Pi_1v,\Pi_2v]}\Pi_2v\right\|^2\right)}_{\text{denote by }E}
\\
&\qquad+\frac4{t^2}\left\|z-\tfrac12[\Pi_1v,\Pi_2v]\right\|^2+\left(\frac2t\left(t+\tfrac12\|\Pi_2v\|^2\right)-2F\right)^2\\
&=t^{-3}E+4F-8F-\frac{4}{t}\|\Pi_2v\|^2F+4F^2=t^{-3}\left(E-4\|\Pi_2v\|^2 t^2F\right)+4F^2-4F
\end{align*}
Introduce the polynomial function 
\begin{equation*}
H(v,z,t)=E-4\|\Pi_2v\|^2 G(v,z,t).
\end{equation*}
To prove the only if part of (i), assume that $F$ is transnormal. Then $F=G/t^2$ and $\|\nabla F\|^2-4 F^2+4F=H/t^3$ are  algebraically dependent rational functions by the Jacobi criterion.
Let $R(x,y)=\sum_{i+j\leq \ell}r_{ij}x^iy^j$ be a polynomial of degree $\ell>0$ such that $R(G/t^2,H/t^3)=0$. The polynomials $G$ and $H$ are both quadratic in $z$ with degree $2$ $z$-homogeneous parts
\begin{align*}
G_2(v,z,t)&=\|z\|^2,\\
H_2(v,z,t)&=\|\Pi_1J_{z}(\Pi_2v+v)\|^2+\|\Pi_2J_{z}\Pi_2v\|^2-4\|\Pi_2v\|^2 \|z\|^2.
\end{align*}
Thus, the degree $2\ell$ $z$-homogeneous part of $R(G/t^2,H/t^3)$ equals
\[
t^{-3\ell}\sum_{i+j=\ell}r_{ij}G_2^iH_2^jt^i.
\]
Since $R(G_2/t^2,H_2/t^3)=0$, furthermore, $G_2$ and $H_2$ do not depend on $t$,
$G_2^iH_2^j=0$ for all $i,j$ such that $i+j=\ell$ and $r_{ij}\neq 0$. As $R$ has degree $\ell$, there exists a pair of indices such that $G_2^iH_2^j=0$. $G_2$ is a non-zero polynomial, thus $H_2=0$, that is
\[
\|\Pi_1J_{z}(\Pi_2v+v)\|^2+\|\Pi_2J_{z}\Pi_2v\|^2=4\|\Pi_2v\|^2 \|z\|^2.
\]
Evaluating this equation at a vector $v\in \mathfrak v_1$, we get $\Pi_1J_zv=0$, therefore, $J_{\mathfrak z}\mathfrak v_1\subseteq \mathfrak v_1^{\perp}=\mathfrak v_2$. Assuming $v\in \mathfrak v_2$, the equation gives 
\[
4\|\Pi_1J_zv\|^2+\|\Pi_2J_zv\|^2=4\|z\|^2\|v\|^2.
\]
On the other hand, we know that
\[
4\|\Pi_1J_zv\|^2+4\|\Pi_2J_zv\|^2=4\|J_zv\|^2=4\|z\|^2\|v\|^2.
\]
Comparison of the last two equations shows $\Pi_2J_zv=0$, consequently $J_{\mathfrak z}\mathfrak v_2\subseteq \mathfrak v_1$. 

Conversely, conditions $J_{\mathfrak z}\mathfrak v_1\subseteq \mathfrak v_2$ and $J_{\mathfrak z}\mathfrak v_2\subseteq \mathfrak v_1$ allow us to simplify the formula for $\|\nabla F\|^2$ as
\begin{align*}
\|\nabla F\|^2
&=t^{-3}\Big(4\|J_{ z-\tfrac12[\Pi_1v,\Pi_2v]}\Pi_2v\|^2+\|(2t+\|\Pi_2v\|^2)\Pi_2v\|^2-4\|\Pi_2v\|^2 t^2F\Big)+4F^2-4F\\
&=t^{-3}\|\Pi_2v\|^2\Big(4\| z-\tfrac12[\Pi_1v,\Pi_2v]\|^2+(2t+\|\Pi_2v\|^2)^2-4t^2F\Big)+4F^2-4F\\
&=4F^2-4F.
\end{align*}
This completes the proof of (i). We note that the conditions $J_{\mathfrak z}\mathfrak v_1\subseteq \mathfrak v_2$ and $J_{\mathfrak z}\mathfrak v_2\subseteq \mathfrak v_1$ are equivalent to the conditions that $\mathfrak v_1$ and $\mathfrak v_2$ are commutative Lie subalgebras of $\mathfrak s$.

To show (ii), we compute the Laplacian of $F$. The computation will use the identity 
\[
\sum_{i=1}^n\|[\mathbf e_i,v]\|^2=m \|v\|^2 \qquad \forall\, v \in\mathfrak v.
\]
(See \cite[Lemma 3.6]{Csikos_Horvath_isoparametric1}.) By the commutativity of $\mathfrak v_1$ and $\mathfrak v_2$, this also implies 
\[
\sum_{i=n_1+1}^n\|[\mathbf e_i,\Pi_1 v]\|^2=\sum_{i=1}^n\|[\mathbf e_i,\Pi_1 v]\|^2=m \|\Pi_1v\|^2 \text{ and }\sum_{i=1}^{n_1}\|[\mathbf e_i,\Pi_2 v]\|^2=\sum_{i=1}^{n}\|[\mathbf e_i,\Pi_2 v]\|^2=m \|\Pi_2v\|^2
\]
for all $v \in\mathfrak v$.

According to \eqref{eq:Laplace}, $\Delta F$ is built from the following components
\begin{align*}
t\Delta_{\mathfrak v} F(v,z,t)&=\frac1t\left(tn_2+(n_2+2)\|\Pi_2v\|^2\right)+\frac{m\|v\|^2}{2t}\\
t\left(t+\frac{\|v\|^2}{4}\right)\Delta_{\mathfrak z} F(v,z,t)&= 2+\frac{\|v\|^2}{2t}\\
t^2\partial_{t}^2 F(v,z,t)&=\partial^2_tG(v,z,t)-4\frac{\partial_t G(v,z,t)}{t}+6\frac{G(v,z,t)}{t^2}=2-\frac{8t+4\|\Pi_2v\|^2}t+6F(v,z,t)\\
t\partial_{t} F(v,z,t)&=\frac{\partial_tG(v,z,t)}{t}-2\frac {G(v,z,t)}{t^2}=\frac{2t+\|\Pi_2v\|^2}t-2F(v,z,t)\\
t\sum_{i=1}^n \partial_{\mathbf e_i}\partial_{[v,\mathbf e_i]} F(v,z,t)
&=\frac1t\sum_{i=1}^{n_1}\langle-[\mathbf e_i,\Pi_2v],[v,\mathbf e_i]\rangle+\frac1t\sum_{i=n_1+1}^{n_2}\langle-[\Pi_1v,\mathbf e_i],[v,\mathbf e_i]\rangle=m\frac{\|\Pi_2v\|^2-\| \Pi_1v\|^2}{t}
\end{align*}
Putting the Laplacian of $F$ together from these pieces, after some algebraic simplifications we get
\begin{equation*}
\Delta F(v,z,t)
=(2m+n+4)F(v,z,t)-(2m+n_1+2)+\frac{(n_2-1-\frac{n}{2})\|\Pi_2v\|^2}t+\frac{(m+1)\|v\|^2}{2t}-\frac{m\|\Pi_1v\|^2}t.
\end{equation*}
If $v\in\mathfrak v_1$, then $F(v,z,t)=1+\|z\|^2/t^2$ depends only on $z$ and $t$. Consequently, if $\Delta F=a\circ F$ for some function $a$, then $\Delta F(v,z,t)$ must not depend on $v$ either. The formula for $\Delta F(v,z,t)$ for $v\in \mathfrak v_1$ contains the $v$ dependent part 
\[
\frac{(m+1)\|v\|^2}{2t}-\frac{m\|\Pi_1v\|^2}t=\frac{(1-m)\|v\|^2}{2t},
\]
hence $F$ can satisfy the Laplace condition only if $m=1$, which means that the ambient Damek--Ricci space is $\mathbb C\mathbf H^k$. 

In the case, when the ambient space is $\mathbb C\mathbf H^k$, and $\mathfrak v_1$ is a maximal totally real subspace of $\mathfrak v$, the function $F$ defined in \eqref{eq:general_F} is the composition of the isoparametric function defined in \eqref{eq:RHn--CHn} with an isometry of $\mathbb C\mathbf H^k$, therefore $F$ is isoparametric. 
\end{proof}
\begin{rem}
    The Clifford module $\mathfrak v$ can have an orthogonal decomposition $\mathfrak v=\mathfrak v_1\operp \mathfrak v_2$ in many cases different from the case $m=1$. See \cite[Chapter I, Proposition 3.6]{Lawson_Michelsohn} for a multitude of further examples.
\end{rem}

\bibliographystyle{acm}
\bibliography{isoparametric}
\end{document}